\DeclareMathOperator{\dive}{\mathrm{div}}
\DeclareMathOperator{\du}{d\!}
\DeclareMathOperator{\G}{\mathcal{Q}}
\DeclareMathOperator{\F}{\mathcal{F}}
\DeclareMathOperator{\Om}{\mathcal{O}}
\def\bG{\mathbf{G}}
\def\hf{\widehat{f}}
\def\hu{\widehat{u}}
\DeclareMathOperator{\circphi}{\accentset{\circ}{\varphi}}
\newtheorem{proposition}{Proposition}
\newtheorem{lemma}{Lemma}
\newtheorem{remark}{Remark}
\newtheorem{assumption}{Assumption}
\newtheorem{theorem}{Theorem}
\newtheorem{corollary}{Corollary}
\newtheorem{definition}{Definition}
\title{Low-regret shape optimization in the presence of\\ missing Dirichlet data}
\author {Karl Kunisch \and John Sebastian H. Simon}
\date{ }
\begin{document}
	
	\maketitle
	
	\begin{abstract}
		A shape optimization problem subject to an elliptic equation in the presence of missing data on the Dirichlet boundary condition is considered. It  is formulated by optimizing the deformation field that varies the spatial  domain where the Poisson equation is posed. To take into consideration the missing boundary data the problem is formulated as a no-regret problem
		and approximated by low-regret problems. This approach allows  to obtain deformation fields that are robust against the missing information. The formulation of the regret problems was achieved by employing the Fenchel transform. Convergence of the solutions of the low-regret to the no-regret problems is analysed,
		the gradient of the cost is characterized and a first order numerical method is proposed.
		Numerical examples illustrate  the robustness of the low-regret deformation fields with respect to  missing data.  To the best of our knowledge, this is the first time that a numerical investigation is reported on the level of effectiveness of the low-regret approach in the presence of missing data in an optimal control problem.
		
		\medskip
		\noindent\textbf{Keywords:} shape optimization,  missing data, no-regret problem, low-regret problem.
	\end{abstract}

	\section{Introduction}
	
	{In this work, we seek to advance the understanding of optimization problems involving incomplete or missing data through the lens of low-regret and no-regret formulations. These approaches are grounded in optimization and control theory. The no-regret formulation aims to find solutions that minimize the worst-case discrepancy between the actual and the ideal outcomes, ensuring robustness against all admissible realizations of the missing data. However, this approach can lead to overly conservative solutions, particularly in high-dimensional or ill-posed settings. To address this, the low-regret formulation is introduced as a regularized variant of the no-regret problem. By incorporating a regularization term, it relaxes the strict worst-case criterion, allowing for a controlled trade-off between robustness and performance, and often leading to more stable and computationally tractable solutions.}
	
	{A particular focus of our study lies in evaluating the numerical performance of these formulations, as, to the best of our knowledge, there has been a lack of reported implementations or comprehensive computational studies in the existing literature. To this end, we conduct a detailed investigation using a tracking-type shape optimization problem characterized by incomplete boundary Dirichlet data. This setting provides a meaningful and challenging testbed for assessing the practical effectiveness of the regret-based methodologies.}
	To deal with this problem we utilize the  notion of the no-regret and low-regret control problems as proposed  in \cite{lions1992}. As remarked by J.L. Lions, the notion of regret was first introduced in \cite{savage1972}. We also mention that regret problems can be seen as a way to \textit{increase robustness} of the control with respect to perturbations of certain data.

	Concerning shape optimization problems there are different techniques to quantify domain variations, including the use of level sets and/or considering an arbitrary class of domains satisfying specified properties. In our case, we opt for considering deformation fields that vary a given fixed domain. We will start with a domain $\Omega\subset\mathbb{R}^d$ and find a vector field $\varphi : \mathbb{R}^d\to \mathbb{R}^d$ so that the new domain $\Omega(\varphi) = (I + \varphi)(\Omega)$ minimizes the functional of interest. This quantification is convenient for specifying  a reference system which is vital in the formulation of no and low-regret problems. After appropriate reformulations of the no- and low-regret problems, the gradient of the associated cost-functional of the latter will be characterized  and  a gradient descent algorithm, see e.g., \cite{dapogny2018,murai2013} and the references therein, with a Barzilai-Borwein line search will be described.  

	The specific shape optimization problem that we shall consider is motivated by inverse problems from electrical impedance tomography \cite{roche1996}, where the unknown quantity is the shape. In our case also part of the boundary condition depends on an unknown quantity. In a classical `inverse approach', one might opt for identifying both quantities, the shape and the missing boundary data simultaneously or alternatingly, {see e.g., \cite{habbal2019,boukraa2021,caubet2019,caubet2019b,caubet2020}}. In the no-regret approach, one proceeds by formulating a saddle point problem, in such a way that the classical inverse problem formulation for the unknown shape is still present. The influence of the unknown boundary data is taken into consideration, but the boundary data themselves are not determined.

	To specify the shape optimization problem we introduce a bounded hold-all domain $D\subset\mathbb{R}^d$, for $d \in\{2,3 \}$, with boundary $\Sigma := \partial D$, a {nontrivial} simply connected subdomain $S\subset D$, with boundary $\Gamma:= \partial S$ and the fixed annular subregion $\Omega = D\backslash \overline{S}$. 
	The set of deformation fields is given by
	{\begin{align*}
		\Om(\Omega) := \{ \varphi \in W^{3,\infty}(D;\mathbb{R}^d) : \|\varphi\|_{W^{3,\infty}} \le c ,\,  \overline{[I + \varphi](\Omega)}\subset D\text{, and }\varphi = 0\text{ and } D\varphi= \mathbf{0}\text{ on }\Sigma \},
	\end{align*}
	where $I$ is the identity map on $\mathbb{R}^d$, $0<c<1$, and $\mathbf{0}$ is the $d$-dimensional matrix with entries equal to zero. 
The condition $ D\varphi= \mathbf{0}\text{ on }\Sigma$ in the specification of $\Om(\Omega)$ will be used in the sensitivity analysis of the cost functional in Section \ref{section:derivative}.} 

	Using the notations $\Omega(\varphi):=(I + \varphi)(\Omega)$ and $\Gamma(\varphi) := (I + \varphi)(\Gamma)$, we study the following optimization problem
	\begin{align}
		&\min_{\varphi\in\Om(\Omega)} J(\varphi,g_\delta) := \frac{1}{2}\int_{\omega}
		\left| u - u_d \right|^2 \du x\label{objcon}\\
		\text{subject to}&\nonumber\\
		&\left.
		\begin{aligned}
			-\Delta u & = f  \text{ in } \Omega(\varphi),\quad
			u  = 0 \text{ on }\Gamma(\varphi),\quad
			u = g_r + g_\delta \text{ on }\Sigma,
		\end{aligned}
		\right.\label{poisson}
	\end{align}
	where $\omega \subset D$ is a domain satisfying $\bar \omega  \subset \Omega(\varphi)$ for each $\varphi\in\Om(\Omega)$. Further
$f:D\to \mathbb{R}$  and $u_d:\omega\to\mathbb{R}$ is a given target profile. {Finally the source function on the boundary $\Sigma$ consists of a reference source $g_r:\Sigma \to \mathbb{R}$ and of 
  $g_\delta: \Sigma \to \mathbb{R}$, which  represents  the missing part of the Dirichlet datum.} See \Cref{figure:setup} for an illustration of the set up.
	
	\begin{figure}[h]
		\centering
		\includegraphics[width=7cm]{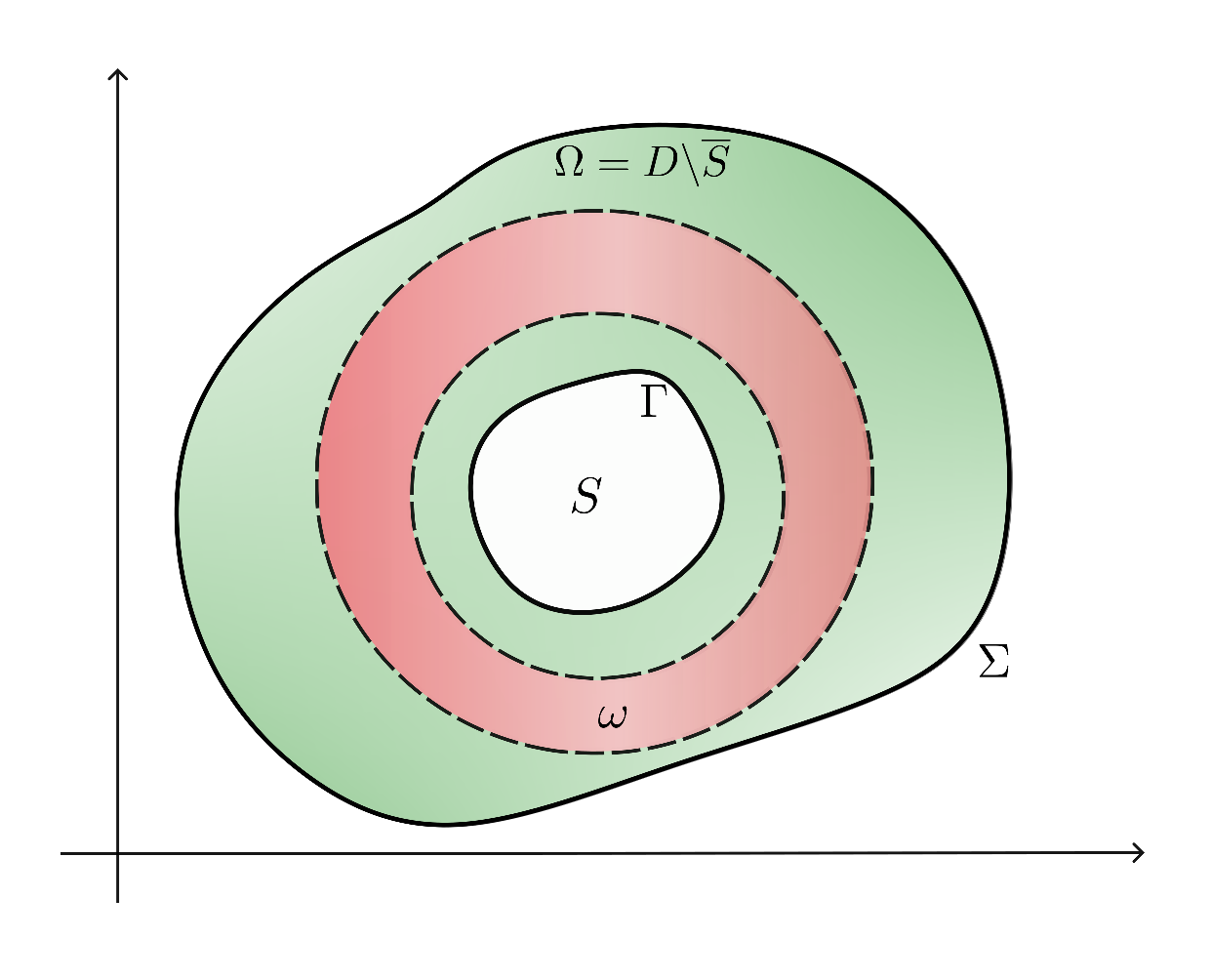}
		\caption{An illustration of the domain $\Omega$, the boundaries $\Gamma$ and $\Sigma$, and the subdomains $\omega$ and $S$.}
		\label{figure:setup}
	\end{figure}
	
{	As the goal is to determine $\varphi\in\Om(\Omega)$ that minimizes the objective functional $J$ for a class of $g_\delta$, in a first step we consider the saddle point problem
	\begin{align*}
		\min_{\varphi\in\Om(\Omega)}\sup_{g_\delta\in \G} J(\varphi,g_\delta),
	\end{align*}
	where $\G:= \{ g\in H^{\frac{1}{2}}(\Sigma) : g_a\le g \le g_b \text{ a.e. on }\Sigma \}$ for some given $g_a,g_b\in H^{\frac{1}{2}}(\Sigma)$ with $g_a(x) \le 0 \le  g_b(x)$, for a.e. $x\in \Sigma$. Incorporating the goal of not wanting to deteriorate too much from a reference  deformation field $\circphi\in \Om(\Omega)$ suggests to replace the previous saddle point problem by:
	\begin{align*}
		\min_{\varphi\in\Om(\Omega)}\sup_{g_\delta\in\G} [J(\varphi,g_\delta) - J(\circphi,g_\delta)].
	\end{align*}
	The problem above is analyzed in  \cite{gabay1994, lions1992}---from which the term \textit{no-regret} problem was introduced---for the treatment of linear optimal control problems with missing initial or boundary data. In the aforementioned papers, the variable $\varphi$ appears as an affine term on the right hand side of the state equation. However, in our situation the state variable $y$ depends on $\varphi$ in a nonlinear fashion.  In such a case, a linearisation of the cost functional $J$ at a reference value for $g$ is carried out  to arrive at a convenient problem formulation for analysis and numerical treatment of low-regret problems, \cite{lions2010,kenne2020}. Considering $g_r$ as an initial guess for the unknown boundary datum suggests to carry out this expansion at
	$0$ and to introduce  $J_1(\varphi,g_\delta):= J(\varphi,0) + \left. \frac{\partial}{\partial g}J(\varphi,g)\right|_{g = 0}g_\delta$. This leads to  the no-regret formulation of  the shape optimization problem with missing boundary data as
	\begin{align}\label{eqk01}
		\min_{\varphi\in\Om(\Omega)}\sup_{g_\delta\in\G} [J_1(\varphi,g_\delta) - J_1(\circphi,g_\delta)],
	\end{align}
	as well as the low-regret formulation 
	\begin{align}\label{eqk1}
		\min_{\varphi\in\Om(\Omega)}\sup_{g_\delta\in \G}\, \left[J_1(\varphi,g_\delta) - J_1(\circphi,g_\delta) - \frac{\varepsilon}{2}\|g_\delta\|_{L^2(\Sigma)}^2\right],
	\end{align}
	where $\varepsilon >0$. The reformulation will  allow a convenient decomposition
	of the cost functionals so that the inner sup-operation can be easily solved. This will be demonstrated in the following subsection. 
} 


	\subsection{Reformulation via decomposition}
{Due to the $\min \sup$ - operation problem \eqref{eqk01} and \eqref{eqk1} represent a significant challenge for numerical approaches. We therefore present a reformulation which will be amenable for practical realizations.  For this purpose it is convenient to introduce the \textit{unknown data-to-state}   operator  $\mathcal{M}_\varphi: H^{\frac{1}{2}}(\Sigma) \to H^1(\Omega(\varphi))$ which maps  $g_\delta\in H^{\frac{1}{2}}(\Sigma)$ to the solution of the Poisson equation \eqref{poisson}. Here and throughout it is assumed that $\varphi\in\Om(\Omega)$,  $f\in L^2(D)$ and the reference boundary term satisfies $g_r\in H^{\frac{1}{2}}(\Sigma)$.} It can be easily shown (see e.g., \cite{casas2006,vexler2007}) that for each $\varphi \in\Om(\Omega)$ the operator $\mathcal{M}_\varphi$ is G{\^a}teaux differentiable at any $g\in H^{\frac{1}{2}}(\Sigma)$ in any direction $\delta g\in H^{\frac{1}{2}}(\Sigma)$. The derivative, which is denoted by $\mathcal{M}_\varphi'(g) \delta g = v(\varphi)$, is the solution to the equation
	\begin{align*}
		\left\{
		\begin{aligned}
			-\Delta v & = 0 &&\text{in }\Omega(\varphi),\\
			v & = 0 &&\text{on }\Gamma(\varphi),\\
			v & = \delta g &&\text{on }\Sigma.
		\end{aligned}
		\right.
	\end{align*}
{By the chain rule,  and using the above expression for  $\mathcal{M}_\varphi'$  with $(g,\delta g)=(0,g_\delta)$ and the definition of $J_1$ we compute}
	\begin{align*}
		\begin{aligned}
			& J_1(\varphi,g_\delta) - J_1(\circphi,g_\delta)  = J(\varphi,0) - J(\circphi,0) +  \left[\frac{\partial}{\partial g}J(\varphi,g)\big|_{g = 0}\right] g_\delta  -  \left[\frac{\partial}{\partial g}J(\circphi,g)\big|_{g = 0}\right] g_\delta\\
			& = J(\varphi,0) - J(\circphi,0) + \int_{\Omega(\varphi)} (\mathcal{M}_\varphi(0) - u_d)\chi_\omega \mathcal{M}_\varphi'(0)g_\delta  \du x - \int_{\Omega} (\mathcal{M}_{\circphi}(0) - u_d)\chi_\omega
			\frac{\partial}{\partial g}\mathcal{M}_{\circphi}'(0)g_\delta  \du x,
		\end{aligned}
	\end{align*}
	where {for a given set $A$ the function $\chi_A: D\to \{0,1 \}$ is the characteristic function of the set $A$ defined as 
	\begin{align*}
		\chi_\omega(x) = \left\{ 
		\begin{aligned}
			1 &\text{ if }x\in A,\\
			0 &\text{ otherwise. }
		\end{aligned}
		\right.
	\end{align*}}
	Introducing  the \textit{adjoint variable} $w(\varphi) \in H_0^1(\Omega(\varphi))\cap H^2(\Omega(\varphi))$ which solves the equation
	\begin{align}\label{adjoint1}
		\left\{
		\begin{aligned}
			-\Delta w & = (u(\varphi) - u_d)\chi_\omega &&\text{in }\Omega(\varphi),\\
			w & = 0 &&\text{on }\partial\Omega(\varphi),
		\end{aligned}
		\right.
	\end{align}
	where $u(\varphi)= \mathcal{M}_\varphi(0)$,
	the above computation  can be continued as follows
	\begin{align*}
		\begin{aligned}
			& J_1(\varphi,g_\delta) - J_1(\circphi,g_\delta) = J(\varphi,0) - J(\circphi,0) + \int_{\Sigma} [ \partial_\nu w(\circphi) - \partial_\nu w(\varphi) ] g_\delta \du s.
		\end{aligned}
	\end{align*}
	The no-regret problem can thus be rewritten as
	\begin{align}\label{no-regreti}
		\min_{\varphi\in\Om(\Omega)}\sup_{g_\delta\in \G} \left[ J(\varphi,0) - J(\circphi,0) + \int_{\Sigma} [ \partial_\nu w(\circphi) - \partial_\nu w(\varphi) ] g_\delta \du s \right],
	\end{align}
{while the low-regret problem is recast in the form
	\begin{align*}
		\min_{\varphi\in\Om(\Omega)}\sup_{g_\delta\in \G} \left[ J(\varphi,0) - J(\circphi,0) + \int_{\Sigma} [ \partial_\nu w(\circphi) - \partial_\nu w(\varphi) ] g_\delta \du s - \frac{\varepsilon}{2}\|g_\delta\|_{L^2(\Sigma)}^2\right].
	\end{align*}
This can be expressed as}
	\begin{align}
		\min_{\varphi\in\Om(\Omega)} \left[ J(\varphi,0) - J(\circphi,0) + \F_\varepsilon^*(\partial_\nu w(\circphi) - \partial_\nu w(\varphi))  \right],
	\end{align}
	where $\F_\varepsilon^* : L^2(\Sigma)\to \mathbb{R}$ is the Fenchel transform of $[g_\delta\mapsto \frac{\varepsilon}{2}\|g_\delta\|_{L^2(\Sigma)}^2]: L^2(\Sigma)\to \mathbb{R}$ defined by
	\begin{align*}
		\F_\varepsilon^*(y) = \sup_{g_\delta\in\G} \left[  \langle y,g_\delta \rangle_\Sigma - \frac{\varepsilon}{2}\|g_\delta\|^2_{L^2(\Sigma)}\right].
	\end{align*}
	
	\begin{remark}\label{dirichletKKT}
		{\em Due to the constraints on the elements of $\G$ and the concavity of the functional inside the supremum in the definition of the Fenchel transform, one can easily establish the existence of $\overline{g}\in \G$ such that $\F_\varepsilon^*(y) = \langle y,\overline{g} \rangle_\Sigma - \frac{\varepsilon}{2}\|\overline{g}\|^2_{L^2(\Sigma)}$. We further infer from the  {Karush-Kuhn-Tucker} conditions that $\overline{g} = P_{[g_a,g_b]}(\frac{1}{\varepsilon}y)$, where $P_{[g_a,g_b]}: L^2(\Sigma)\to L^2(\Sigma)$ is the projection operator on $[g_a,g_b]$. }
	\end{remark}

	\subsection{Related literature and novelty}
	
	Let us discuss related works that led to the formulation of our current problem.
	Aside from the already mentioned seminal works of J.D. Savage \cite{savage1972} and J.L. Lions \cite{lions1992}, we also refer to the work of D. Gabay and J.L. Lions \cite{gabay1994} extending the concept of regret problems to a system of multiple agents. Later O. Nakoulima et al., \cite{nakoulima2003} showed that the concept of Pareto controls and no-regret controls are synonymous. The authors further provided necessary optimality conditions for both no-regret and low-regret optimal control problems, where they first derived such optimality system for the latter and showed that the solution converges to the solution of the optimality system for the former, which turns out to be a singular optimality system. The approach they used ---which we adopted in this exposition--- for the decomposition is based on the first order expansion of the state solution around the zero control.
	
	Recent works dealing with regret control problems include systems modelling populations dynamics: B. Jacob and A. Omrane \cite{jacob2010} worked on an age-structured model with missing initial data; C. Kenne et al., \cite{kenne2020} investigated an age-structured model with missing birth rate leading to a system which depends nonlinearly on the missing data.
	Systems involving fractional time derivatives, to capture appropriate description of memory and hereditary effects, were also studied: D. Baleanu et al., \cite{baleanu2016} worked on a wave equation with a Riemann-Liouville time derivative with unknown data in the initial condition. Meanwhile, G. Mophou \cite{mophou2017} considered perturbations on the boundary data in a heat equation. Lastly, we also mention works on equations modelling physical phenomena such as a wave equation with unknown Dirichlet data and wave speed  \cite{hafdallah2019}, and a model for a thermoelastic body with missing initial condition \cite{hafdallah2020}.
	
	In the mentioned papers, conditions on the set of admissible controls were imposed in such a manner that the last term in \eqref{no-regreti} becomes zero. We do not use such a condition since the box constraints on the missing data guarantees the existence of a maximizer for the said term. We also mention that imposing constraints on the missing data has been mentioned in \cite{lions2010} , however, to best of our knowledge, the analysis with such constraints has not been done and implemented numerically, regardless of shape optimization.

	We also mention that apparently in the articles dealing with no-regret and low-regret problems for optimal control  their implementation and consequently their behavior in numerical practice,  have not been addressed. Numerically verifying that the low-regret and no-regret formulations  are effective for problems with missing data for shape optimization  is therefore a main innovative step of this research, besides the fact that these techniques were not analyzed for shape optimization before. Formulating  the shape optimization problem by means of minimizing with respect to deformation fields, is convenient for a function space analysis of the problem and the numerical gradient method can be  conveniently  combined with a Barzilai--Borwein line-search method.
	
	{
	Beyond the connections to optimal control and regret-based formulations, the present work is also closely related to inverse problems, particularly those concerned with shape identification under partial or uncertain observations. Indeed, reconstructing the shape of a domain subject to PDE constraints with missing boundary data naturally falls within the scope of inverse problems. A classical inverse approach would attempt to recover both the domain and the unknown boundary data, often leading to ill-posed formulations, see e.g., \cite{caubet2019} and the references therein. In contrast, our no-regret and low-regret formulations offer an alternative by optimizing deformation fields that are robust across all admissible realizations of the missing data, without explicitly reconstructing it. The approach is relevant in settings such as electrical impedance tomography (EIT), where the shape and boundary inputs are both sources of uncertainty. Our work thus serves as a bridge between control theory and inverse problems, demonstrating that no-regret-based shape optimization can serve as a viable and numerically effective framework for addressing PDE-constrained inverse problems with partial data. To the best of our knowledge, this is the first implementation and numerical investigation of low-regret methods in this context.}
	We also comment on its close connection as a special case of  conductivity reconstruction problems.
	Such inverse problems have been analyzed as shape optimization problem by e.g,. \cite{roche1996},  and were extensively studied also in e.g., \cite{afraites2022,afraites2024,cherrat2023,eppler2005}. Of course, our current problem is a much simplified version of the mentioned inverse problems. Nevertheless, we look at this article  as a precursor into further analysing such problems with the regret-based approach.

	{For shape optimization much research has also been directed to shape optimization under uncertainty where the unknown quantity, in our case $g_\delta$ is treated as a stochastic random force with known probability distribution. In this case the solution to the state equation becomes a stochastic variable as well. In \cite{conti2009,dambrine2015}, for example, the resulting  optimization problems minimizing the expectation over the probability space of appropriately defined cost-functionals were investigated. This is a different setting from the one followed here, were we concentrate on a worst case scenario. Worst case parametric and shape optimization problems for stationary elliptic systems were also investigated in e.g \cite{allaire2014} but not in a (low)-regret  type formulation and not for setting represented by \eqref{objcon}-\eqref{poisson}.  Abstract calculus of variation techniques were used in \cite{bellido2017} to investigate worst case scenarios for cost functionals with specific structure, as for instance the Dirichlet energy. }

	The remainder of this article is structured as follows: in \Cref{section:existence}, we provide the existence of solutions to the low-regret and no-regret problems. This encapsulates establishing the existence of solutions to the governing state equations and provides the appropriate topology for the deformation fields. \Cref{section:low-to-no} is dedicated to show convergence of solutions of the low-regret problem to a solution of the no-regret problem using  properties of the Fenchel transform. We show the G{\^a}teaux differentiability of the objective functional of the low-regret problem in \Cref{section:derivative}, where we also provide a first-order necessary condition for the optimal deformation field. Numerical examples are provided in \Cref{section:numerics} where we observe the convergence of the free optimal boundary $\Gamma$  as we take $\varepsilon\to 0$, and compare the solutions of the low-regret problem to the solutions we get when $g_\delta$ is set to a  specific value. Concluding remarks and possible future works are given in \Cref{section:conclusion}.

	\section{Existence Analysis}
	\label{section:existence}

	In this section, we establish the existence of minimizing deformation fields for the low-regret and no-regret problems. 
	Also included in the analysis is showing the continuity of the deformation-to-state map under an appropriate closed topology.
	
	Before we proceed, we note that the reformulation of both problems gives rise {to the adjoint variable }$w$. This implies that the regret problems have two governing equations, namely, given $\varphi\in\Om(\Omega)$ we want to find $(u(\varphi),w(\varphi))$ that solves
	\begin{align}\label{eq6}
		\left\{
		\begin{aligned}
			-\Delta u(\varphi) & = f  \text{ in } \Omega(\varphi),\quad
			u(\varphi)  = 0 \text{ on }\Gamma(\varphi),\quad
			u(\varphi) = g_r \text{ on }\Sigma.\\
			-\Delta w(\varphi) & = (u(\varphi) - u_d)\chi_\omega  \text{ in } \Omega(\varphi),\quad
			w(\varphi)  = 0 \text{ on }\partial\Omega(\varphi).
		\end{aligned}
		\right.
	\end{align} 
	We then write the no-regret problem as
	\begin{align}\label{no-regret}
		\min_{\varphi\in\Om(\Omega)} \sup_{g_\delta \in \G} \left[ \widetilde{J}(\varphi) - \widetilde{J}(\circphi) + \int_\Sigma (\partial_\nu w(\circphi) - \partial_\nu w(\varphi))g_\delta \du s\right] 
		\text{ subject to }\eqref{eq6},
	\end{align}
	where the functional $\widetilde{J}:\Om(\Omega)\to \mathbb{R}$ is defined as
	\begin{align*}
		\widetilde{J}(\varphi) = \int_\omega |u(\varphi) - u_d|^2 \du x,
	\end{align*}
	which  also  satisfies $\widetilde{J}(\varphi) = J(\varphi,0)$. 
\
On the other hand, the low-regret problem is written as
	\begin{align}\label{low-regret}
		\min_{\varphi\in\Om(\Omega)} \left[ \widetilde{J}(\varphi) - \widetilde{J}(\circphi) + \F_\varepsilon^*(\partial_\nu w(\circphi) - \partial_\nu w(\varphi))  \right]
		\text{ subject to }\eqref{eq6}.
	\end{align}

	\subsection{Analysis on the governing equations}
	
	To help us in proving the existence of the solution to the governing equations we will rely on the following assumptions:
	\begin{assumption}\label{assumption1}
		The domain $\Omega$, source function $f$, given boundary data $g_r$ and the desired profile $u_d$ satisfy the following:
		\begin{itemize}
			\item[i.] $\Omega$ is of class $\mathcal{C}^{2,1}$;
			\item[ii.] $f\in L^2(D)$, $g_r\in H^{\frac{3}{2}}(\Sigma)$, and $u_d \in H^1(\omega)$.
		\end{itemize}
	\end{assumption}
	
	To establish the existence of minimizing deformation fields for the optimization problems \eqref{no-regret} and \eqref{low-regret} we must first discuss the existence of {weak solutions} to the \textit{one-way coupled system}
	\begin{align}\label{system:couple}
		\left\{
		\begin{aligned}
			-\Delta \widetilde{u} & = f + \Delta u_{g_r} &&\text{in }\Omega,\\
			-\Delta w & = (\widetilde{u} + u_{g_r} - u_d)\chi_\omega &&\text{in }\Omega,\\
			\widetilde{u} & = w  = 0&&\text{on }\partial\Omega,
		\end{aligned}
		\right.
	\end{align}
	where $\Omega\subset D$ is an arbitrary domain that is of class $\mathcal{C}^{2,1}$ and $u_{g_r}\in H^2(D)$ is a function which satisfies $u_{g_r}|_{\Sigma} = g_r$ whose existence is guaranteed by the surjectivity of the trace operator and satisfies
	\begin{align*}
		\|u_{g_r}\|_{H^2(D)} \le c \|g_r\|_{H^{\frac{3}{2}}(\Sigma)},
	\end{align*}
	for some constant $c>0$. To be specific, we choose $u_{g_r}\in H^2(D)$ which solves 
	\begin{align}
		-\Delta u_{g_r} = 0\text{ in }D,
		\quad u_{g_r} = g_r\text{ on }\Sigma.
	\end{align}
	
	\begin{remark}
		Given a strong solution $\widetilde{u}\in H_0^1(\Omega)\cap H^2(\Omega)$ to \eqref{system:couple}, we see that 
		\begin{align}
			u = \widetilde{u}+u_{g_r}\in H^2(\Omega)
		\end{align}
		solves the first equation in \eqref{eq6}.
	\end{remark}
	
	To aid us in the analysis, we also recall the following uniform Poicar{\'e} inequality, which is due to the monotonicity of the eigenvalues of the Dirichlet problem with respect to domains and the minima of the Rayleigh quotient, {see Proposition 3.1.17 and Corollary 4.7.4 in \cite{henrot2010} or Theorem 2.1 of \cite[page 412]{delfour2011}}.
	\begin{lemma}\label{poincare}
		There exists a constant $c_P>0$ such that for each $\Omega\subset D$
		\begin{align}
			\|u\|_{L^2(\Omega)}^2 \le c_P \|\nabla u\|_{L^2(\Omega)^d}^2,\quad \forall u\in H_0^1(\Omega).
		\end{align}
	\end{lemma}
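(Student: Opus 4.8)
The strategy is to reduce the inequality on the variable domain $\Omega$ to the classical Poincar\'e inequality on the fixed hold-all domain $D$ by extension by zero, so that the resulting constant depends only on $D$.

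\emph{Step 1 (zero-extension).} Fix an open set $\Omega\subset D$ and $u\in H_0^1(\Omega)$, and let $\hu\in L^2(D)$ denote the extension of $u$ by zero to $D$. Since $C_c^\infty(\Omega)$ is dense in $H_0^1(\Omega)$, and the zero-extension $\widehat{\psi}$ of any $\psi\in C_c^\infty(\Omega)$ belongs to $C_c^\infty(D)$ with $\nabla\widehat{\psi}=\widehat{\nabla\psi}$, a limiting argument shows that $\hu\in H_0^1(D)$ and that the extension preserves the relevant norms:
\begin{align*}
\|\hu\|_{L^2(D)}=\|u\|_{L^2(\Omega)},\qquad \|\nabla\hu\|_{L^2(D)^d}=\|\nabla u\|_{L^2(\Omega)^d}.
\end{align*}
No regularity of $\partial\Omega$ is used here; only the openness of $\Omega$ matters, which is precisely what will make the constant uniform.

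\emph{Step 2 (Poincar\'e on $D$).} Because $D$ is bounded, the standard Poincar\'e inequality holds on $D$: there exists $c_P>0$, depending only on $D$, such that $\|v\|_{L^2(D)}^2\le c_P\|\nabla v\|_{L^2(D)^d}^2$ for all $v\in H_0^1(D)$. Concretely one may take $c_P=\lambda_1(D)^{-1}$, where $\lambda_1(D)=\min_{v\in H_0^1(D)\setminus\{0\}}\|\nabla v\|_{L^2(D)^d}^2/\|v\|_{L^2(D)}^2>0$ is the first Dirichlet eigenvalue of $-\Delta$ on $D$. Applying this to $\hu$ and using Step 1,
\begin{align*}
\|u\|_{L^2(\Omega)}^2=\|\hu\|_{L^2(D)}^2\le c_P\|\nabla\hu\|_{L^2(D)^d}^2=c_P\|\nabla u\|_{L^2(\Omega)^d}^2,
\end{align*}
and since $c_P$ depends only on $D$, the same constant works for every $\Omega\subset D$, as claimed.

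Equivalently, in the eigenvalue language indicated in the statement: the Rayleigh-quotient characterization gives $\|u\|_{L^2(\Omega)}^2\le\lambda_1(\Omega)^{-1}\|\nabla u\|_{L^2(\Omega)^d}^2$ for $u\in H_0^1(\Omega)$, while the zero-extension embedding $H_0^1(\Omega)\hookrightarrow H_0^1(D)$ together with that same quotient yields the monotonicity $\lambda_1(\Omega)\ge\lambda_1(D)$; hence $c_P:=\lambda_1(D)^{-1}$ is admissible uniformly in $\Omega$. I do not expect a genuine obstacle; the only mildly delicate point is the assertion that zero-extension maps $H_0^1(\Omega)$ into $H_0^1(D)$, but this is a classical consequence of the density of compactly supported smooth functions and requires no boundary regularity, which is exactly why the constant is independent of the subdomain.
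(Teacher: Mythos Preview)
Your proof is correct and follows essentially the same approach the paper indicates: the paper does not give a full proof but only remarks that the result is ``due to the monotonicity of the eigenvalues of the Dirichlet problem with respect to domains and the minima of the Rayleigh quotient,'' which is exactly the eigenvalue formulation you spell out in your final paragraph (and which is equivalent to your zero-extension argument in Steps~1--2).
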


	We say that $(\widetilde{u},w)\in H_0^1(\Omega)^2$ is a weak solution to \eqref{system:couple} if it solves the variational problem
	\begin{align}\label{weakform}
		\int_\Omega \nabla \widetilde{u}\cdot \nabla \psi_1 \du x + \int_\Omega \nabla w\cdot \nabla \psi_2 \du x = \int_\Omega f\psi_1 \du x - \int_\Omega \nabla u_{g_r}\cdot \nabla \psi_1 \du x + \int_\omega (\widetilde{u} + u_{g_r} -u_d)\psi_2 \du x ,
	\end{align}
	for all $\psi_1,\psi_2\in H_0^1(\Omega)$.
	
	The following proposition gives us the existence of the unique weak solution of \eqref{system:couple}.
	
	\begin{proposition}\label{prop:stateexistence}
		Let $\Omega\subset D$, $f$, $g_r$ and $u_d$ satisfy \Cref{assumption1}. Then there exists a unique weak solution $(\widetilde{u},w)\in H_0^1(\Omega)^2$ to \eqref{weakform} such that
		\begin{align}\label{estimate:sol}
			\|\nabla\widetilde{u}\|_{L^2(\Omega)^d} + \|\nabla w\|_{L^2(\Omega)^d} \le c ( \|f\|_{L^2(D)} +  \|g_r\|_{H^{\frac{3}{2}}(\Sigma)} + \|u_d\|_{L^2(\omega)} ),
		\end{align}
		for some constant $c>0$ independent of $\Omega\subset D$.
	\end{proposition}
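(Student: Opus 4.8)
The plan is to exploit the triangular, one-way coupled structure of \eqref{weakform}: one first solves for $\widetilde u$ alone and then, with $\widetilde u$ in hand, solves the decoupled equation for $w$. Choosing $\psi_2 = 0$ in \eqref{weakform} isolates the problem of finding $\widetilde u\in H_0^1(\Omega)$ with
\[
\int_\Omega \nabla\widetilde u\cdot\nabla\psi_1\,\du x = \int_\Omega f\psi_1\,\du x - \int_\Omega \nabla u_g\cdot\nabla\psi_1\,\du x\qquad\text{for all }\psi_1\in H_0^1(\Omega).
\]
The bilinear form here is the Dirichlet form, which is bounded and, thanks to the uniform Poincar\'e inequality of the preceding lemma, coercive on $H_0^1(\Omega)$ with a coercivity constant (namely $1/(1+c_P)$) that is independent of $\Omega\subset D$. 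The right-hand side is a bounded linear functional on $H_0^1(\Omega)$ since $f\in L^2(D)$ restricts to $L^2(\Omega)$ and $\nabla u_g\in L^2(D)^d$ restricts to $L^2(\Omega)^d$, with functional norm controlled by $\|f\|_{L^2(D)}+\|u_g\|_{H^2(D)}$. The Lax--Milgram lemma then provides a unique $\widetilde u\in H_0^1(\Omega)$ with $\|\nabla\widetilde u\|_{L^2(\Omega)^d}\le c\,(\|f\|_{L^2(D)}+\|u_g\|_{H^2(D)})$, and the bound $\|u_g\|_{H^2(D)}\le c\,\|g_d\|_{H^{\frac{3}{2}}(\Sigma)}$ rewrites this purely in terms of the given data.

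Next, taking $\psi_1 = 0$ in \eqref{weakform} with $\widetilde u$ now fixed, one seeks $w\in H_0^1(\Omega)$ solving
\[
\int_\Omega \nabla w\cdot\nabla\psi_2\,\du x = \int_\omega (\widetilde u + u_g - u_d)\psi_2\,\du x\qquad\text{for all }\psi_2\in H_0^1(\Omega).
\]
Since $\widetilde u,u_g,u_d\in L^2(\omega)$, the right-hand side is again a bounded linear functional, and a second application of Lax--Milgram gives a unique $w\in H_0^1(\Omega)$ with $\|\nabla w\|_{L^2(\Omega)^d}\le c\,(\|\widetilde u\|_{L^2(\omega)}+\|u_g\|_{L^2(\omega)}+\|u_d\|_{L^2(\omega)})$. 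Using $\omega\subset\Omega$ together with the uniform Poincar\'e inequality, I would estimate $\|\widetilde u\|_{L^2(\omega)}\le\|\widetilde u\|_{L^2(\Omega)}\le\sqrt{c_P}\,\|\nabla\widetilde u\|_{L^2(\Omega)^d}$ and then insert the first-step bound, while $\|u_g\|_{L^2(\omega)}\le\|u_g\|_{H^2(D)}\le c\,\|g_d\|_{H^{\frac{3}{2}}(\Sigma)}$. Adding the two bounds yields \eqref{estimate:sol}. Finally, adding the two variational identities for arbitrary independent $\psi_1,\psi_2$ shows that the pair $(\widetilde u,w)$ solves \eqref{weakform}, and uniqueness follows because any solution must have first component equal to the Lax--Milgram solution of the $\psi_2=0$ problem, and hence second component equal to that of the $\psi_1=0$ problem.

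The analytic content is thus just two textbook applications of Lax--Milgram, and I expect the only genuinely delicate point to be the $\Omega$-independence of the constant in \eqref{estimate:sol}. This hinges on the uniform Poincar\'e inequality --- which makes both the coercivity constants and the embedding constant of $H_0^1(\Omega)\hookrightarrow L^2(\Omega)$ uniform over $\Omega\subset D$ --- and on the structural observation that every norm appearing on the right-hand side of \eqref{estimate:sol} is taken over one of the fixed sets $D$, $\omega$, or $\Sigma$, never over the moving domain $\Omega$. Keeping this bookkeeping honest, rather than overcoming any real difficulty, is the main thing to be careful about.
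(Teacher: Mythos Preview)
The paper does not actually supply a proof of this proposition; it states the result and immediately passes to \Cref{remark:regularity}, where the authors note that the assumptions can even be weakened and that the proof is essentially classical. Your argument is correct and is precisely the natural one the authors have in mind: exploit the triangular structure by first solving for $\widetilde u$ via Lax--Milgram (with coercivity uniform in $\Omega$ thanks to the uniform Poincar\'e inequality), then feed $\widetilde u$ into the right-hand side of the $w$-equation and apply Lax--Milgram a second time. Your bookkeeping on the $\Omega$-independence of the constant is exactly the point worth making explicit.
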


	{The independence of the constant $c>0$ in \eqref{estimate:sol} is possible due to \Cref{poincare}.}

	\begin{remark}\label{remark:regularity}
		It can be easily shown that the assumptions on the data can be reduced, e.g. $\Omega\subset D$ is Lipschitz, $f\in H^{-1}(\Omega)$, $g\in H^{\frac{1}{2}}(\Sigma)$  and $u_d\in L^2(\omega)$, to establish the existence of the weak solution to \eqref{system:couple}. In fact, the present assumption gives us more regular solutions, i.e., $\widetilde{u}\in H_0^1(\Omega)\cap H^2(\Omega)$ and $w\in H_0^1(\Omega)\cap H^3(\Omega)$ by the classical elliptic regularity, whose norms -- in their respective spaces -- are bounded above by the right-hand side of \eqref{estimate:sol}. We deemed it necessary to assume such properties now as they are needed for the subsequent analyses (existence and sensitivity analysis) on the shape optimization.
	\end{remark}

	Next, we address the issue of continuity of the deformation-to-solution map, i.e. given a sequence $(\varphi_n)\subset \Om(\Omega)$ converging to $\varphi^*\in\Om(\Omega)$, can we prove some convergence of $(\widetilde{u}(\varphi_n),w(\varphi_n))\in H_0^1(\Omega(\varphi_n))^2$ to $(\widetilde{u}(\varphi^*),w(\varphi^*))\in H_0^1(\Omega(\varphi^*))^2$? Here we used the notation $(\widetilde{u}(\varphi),w(\varphi))\in H^1_0(\Omega(\varphi))$ to denote the weak solution of \eqref{system:couple} in the domain $\Omega(\varphi)$.
	
	The question above highlights two different convergences that we should tackle: 1. with which topology do we endow $\Om(\Omega)$; and 2. what notion of convergence is meant for the convergence of states. We first discuss the issue on the convergence of the deformation fields. The compact embedding $C^{2,1}(\overline{D};\mathbb{R}^d) \hookrightarrow C^{2}(\overline{D};\mathbb{R}^d)$ implies that there exists $\varphi^*\in C^{2}(\overline{D};\mathbb{R}^d)$ such that $\varphi_n\to \varphi^*$ in $C^{2}(\overline{D};\mathbb{R}^d)$. The uniform bound on the elements of $\Om(\Omega)$ implies that $\varphi^*\in \Om(\Omega)$. Having this in mind, when we talk about convergence of sequence in $\Om(\Omega)$, we refer to the topology induced by the space $C^2(\overline{D};\mathbb{R}^d)$ in which the set $\Om(\Omega)$ is closed.
	{
	\begin{definition}
		Let $(\varphi_n) \subset \Om(\Omega)$ be a sequence and $\varphi\in\Om(\Omega)$. We say that the sequence $(\varphi_n)$ converges to $\varphi$ in $\Om(\Omega)$ --- denoted as $\varphi_n \to \varphi$ in $\Om(\Omega)$ --- if and only if the sequence converges with the $C^2(\overline{D};\mathbb{R}^d)$--topology.
	\end{definition}}
	
	To address the second issue, we first notice that since each $\varphi_n\in \Om(\Omega)$ is a diffeomorphism of order $\mathcal{C}^{2,1}$, each $\Omega_n:= \Omega(\varphi_n) \subset D$ is a domain of class $\mathcal{C}^{2,1}$ and satisfies the uniform cone property \cite[Theorem 2.4.7]{henrot2010}. This implies the existence of linear and continuous extension operators $E_n : H_0^1(\Omega_n)^2\to H^1(D)^2$ such that 
	\begin{align}\label{emb}
		\sup_{n\in \mathbb{N}} \|E_n\|_{\mathcal{L}(H_0^1(\Omega_n)^2; H^1(D)^2)} \le K_1,
	\end{align}
	where $K_1>0$ is independent on $\Omega_n$. 
	The extension operator can also be defined such that
	\begin{align}\label{uniext}
		\|E_n(u,w)\|_{\mathcal{L}((H^2(\Omega_n)\cap H_0^1(\Omega_n))^2; H^2(D)^2)} \le K_2,
	\end{align}
with $K_2$ independent of $n$,
{see e.g.  \cite[Theorem II.1]{chenais1975}.}
	Another implication of the uniform cone property is the following compactness of the characteristic functions corresponding to the domains $\Omega(\varphi)$ for $\varphi\in\Om(\Omega)$.
	\begin{lemma}\label{lemma:charcon}
		Let $(\varphi_n) \subset \Om(\Omega)$ be a sequence converging to $\varphi^*\in \Om(\Omega)$. Then a subsequence of the characteristic functions $(\chi_{\Omega_n})\subset L^2(D)$ of $\Omega_n= \Omega(\varphi_n)$ converges strongly to $\chi_{\Omega^*}\in L^2(D)$, which is a.e. the characteristic function corresponding to $\Omega^*:= \Omega(\varphi^*)$. 
	\end{lemma}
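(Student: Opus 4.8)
The plan is to use the diffeomorphic structure of the domain perturbations to reduce the claim to a pointwise-almost-everywhere statement on the fixed hold-all $D$, after which dominated convergence finishes the argument; in fact this shows the \emph{full} sequence converges, so the passage to a subsequence in the statement is not actually needed. Write $\Phi_n := I + \varphi_n$ and $\Phi^* := I + \varphi^*$. Since $\|\varphi_n\|_{W^{3,\infty}}\le c<1$ and $\varphi_n = D\varphi_n = 0$ on $\Sigma$ (and likewise for $\varphi^*$), each of these maps restricts to a $\mathcal{C}^2$-diffeomorphism of $\overline{D}$ onto itself (it equals the identity on $\partial D$ and $\|D\varphi_n\|_{L^\infty}<1$), as already used in the excerpt. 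Setting $\Omega^* := \Phi^*(\Omega)$, which is open, one obtains the pullback identities $\chi_{\Omega_n} = \chi_{\Omega}\circ\Phi_n^{-1}$ and $\chi_{\Omega^*} = \chi_{\Omega}\circ(\Phi^*)^{-1}$ a.e. on $D$.

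The key intermediate step, which I expect to be the only genuine (though routine) technical point, is that $\Phi_n^{-1}\to(\Phi^*)^{-1}$ uniformly on $\overline{D}$. Indeed, fix $y\in\overline{D}$ and set $x_n := \Phi_n^{-1}(y)$, $x^* := (\Phi^*)^{-1}(y)$; then, since $(\Phi^*)^{-1}$ is $\mathcal{C}^1$ on the compact set $\overline{D}$ and hence Lipschitz with some constant $L$, and since $\Phi_n(x_n)=y$,
\[
|x_n - x^*| = \bigl|(\Phi^*)^{-1}(\Phi^*(x_n)) - (\Phi^*)^{-1}(\Phi_n(x_n))\bigr| \le L\,\|\Phi^* - \Phi_n\|_{\mathcal{C}^0(\overline{D})} \longrightarrow 0 ,
\]
with a bound independent of $y$, because $\varphi_n\to\varphi^*$ in $\mathcal{C}^2(\overline{D};\mathbb{R}^d)$ and a fortiori in $\mathcal{C}^0$. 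This is a standard perturbation-of-the-identity estimate.

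To conclude, recall that $\Omega$ is of class $\mathcal{C}^{2,1}$ by \Cref{assumption1}, so $|\partial\Omega| = 0$, and hence $N := \Phi^*(\partial\Omega)$ is Lebesgue-null, being the Lipschitz image of a null set. For every $y\in D\setminus N$ the point $(\Phi^*)^{-1}(y)$ lies either in the open set $\Omega$ or in the open set $D\setminus\overline{\Omega}$; by the uniform convergence $\Phi_n^{-1}(y)\to(\Phi^*)^{-1}(y)$ just established, $\Phi_n^{-1}(y)$ lies on the same side for all $n$ large, so $\chi_{\Omega_n}(y) = \chi_{\Omega}(\Phi_n^{-1}(y))\to\chi_{\Omega}((\Phi^*)^{-1}(y)) = \chi_{\Omega^*}(y)$. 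Thus $\chi_{\Omega_n}\to\chi_{\Omega^*}$ pointwise a.e. on $D$, and since $0\le\chi_{\Omega_n}\le1$ with $|D|<\infty$, dominated convergence upgrades this to strong convergence in $L^2(D)$ (indeed in every $L^p(D)$, $p<\infty$); by construction the limit is a.e. the characteristic function of $\Omega^*=\Omega(\varphi^*)$. As an alternative one could invoke the uniform cone property of the $\Omega_n$ together with the standard criterion relating Hausdorff convergence of open sets to convergence of their characteristic functions, cf.\ \cite{henrot2010}, but the explicit diffeomorphisms make the direct argument above shorter.
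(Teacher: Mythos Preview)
Your argument is correct, and in fact stronger than what is claimed: you show the \emph{full} sequence converges, not merely a subsequence. The route, however, is genuinely different from the paper's. The paper does not exploit the explicit diffeomorphism structure at all; instead it invokes the uniform cone property of the family $\{\Omega_n\}$ and appeals to abstract compactness results of Chenais (\cite{chenais1975}, Theorem~III.1 and Proposition~III.5) to extract a subsequence with $\chi_{\Omega_n}\to\chi_{\Omega^*}$ in $L^2(D)$, where $\Omega^*=\bigcap_m\bigcup_{n\ge m}\Omega_n$, and then asserts without detail that this set coincides with $\Omega(\varphi^*)$. Your approach is more elementary and self-contained: the pullback identity $\chi_{\Omega_n}=\chi_\Omega\circ\Phi_n^{-1}$, the Lipschitz stability of the inverse, and dominated convergence do all the work, with no external compactness theorem needed. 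The price is that your argument is tied to the deformation-field parametrisation, whereas the Chenais machinery would apply to more general families of domains satisfying only a uniform cone condition; but in the present setting, where the parametrisation by $\varphi$ is the whole point, your direct argument is both shorter and yields the sharper conclusion. The alternative you sketch at the end (Hausdorff convergence plus cone property) is essentially what the paper does.
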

	
	\begin{proof}
		From the uniform cone property and \cite[Theorem III.1]{chenais1975}, there exists a subsequence of $(\chi_{\Omega_n})\subset L^2(D)$, which we denote in the same way, and an element $\chi_{\Omega^*}\in L^2(D)$ such that $\chi_{\Omega_n}\to \chi_{\Omega^*}$ in $L^2(D)$. From \cite[Proposition~III.5]{chenais1975}, $\chi_{\Omega^*}$ is a.e. the characteristic function of 
		\begin{align*}
			\Omega^* := \bigcap_{m\in\mathbb{N}} G_m,\text{ where }G_m = \bigcup_{n\ge m}\Omega_n.
		\end{align*}
		One can easily show that $\Omega(\varphi^*) = \Omega^*$, which concludes the proof.
	\end{proof}

	Since {$	\| \chi_{\Omega_n} \|_{L^\infty(D)} \le 1$} the previous lemma also implies that $\chi_{\Omega_n} \to \chi_{\Omega^*}$ in $L^p(D)$, for any $1\le p < + \infty$.
	
	We are now in position to lay down the promised continuity of the deformation-to-state map.
	\begin{proposition}\label{shapecontinuity}
		Suppose that $(\varphi_n) \subset \Om(\Omega)$ is a sequence converging to $\varphi^*\in \Om(\Omega)$. Then there exists $(u^*,w^*)\in H^1(D)^2$ such that $E_n(\widetilde{u}(\varphi_n),w(\varphi_n)) \rightharpoonup (u^*,w^*)$ in $H^1(D)^2$. Furthermore, $(u^*,w^*)_{|\Omega(\varphi^*)} = (\widetilde{u}(\varphi^*),w(\varphi^*))\in H_0^1(\Omega(\varphi^*))^2$.
	\end{proposition}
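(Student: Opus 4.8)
The plan is to run the classical moving-domain compactness argument. I would combine the domain-independent a~priori bound of Proposition~\ref{prop:stateexistence}, the uniform boundedness of the extension operators $E_n$, the strong convergence $\chi_{\Omega_n}\to\chi_{\Omega^*}$ from Lemma~\ref{lemma:charcon} (writing $\Omega^*:=\Omega(\varphi^*)$ and $\Omega_n:=\Omega(\varphi_n)$), and the convergence $\varphi_n\to\varphi^*$ in $C^2(\overline D;\mathbb{R}^d)$ to pass to the limit in the weak formulation \eqref{weakform}.

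\emph{A priori bounds, extraction, and vanishing off $\Omega^*$.} Let $(\widetilde u_n,w_n)\in H_0^1(\Omega_n)^2$ be the weak solution of \eqref{system:couple} on $\Omega_n$. Since $f$, $g_d$ (hence $u_g$) and $u_d$ are fixed and the constant in \eqref{estimate:sol} is independent of the domain, $(\widetilde u_n,w_n)$ is bounded in $H_0^1(\Omega_n)^2$ uniformly in $n$ (using also the uniform Poincar\'e inequality). Applying $E_n$ and $\sup_n\|E_n\|\le K$, the sequence $(u_n^E,w_n^E):=E_n(\widetilde u_n,w_n)$ is bounded in $H_0^1(D)^2$; hence, along a subsequence, $(u_n^E,w_n^E)\rightharpoonup(u^*,w^*)$ in $H^1(D)^2$ with $(u^*,w^*)\in H_0^1(D)^2$, and $(u_n^E,w_n^E)\to(u^*,w^*)$ in $L^2(D)^2$ by Rellich. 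By Lemma~\ref{lemma:charcon} we may also assume $\chi_{\Omega_n}\to\chi_{\Omega^*}$ in $L^2(D)$. Since $E_n$ extends by zero, $u_n^E(1-\chi_{\Omega_n})=0=w_n^E(1-\chi_{\Omega_n})$ a.e.\ in $D$; passing to the limit (product of two strongly $L^2$-convergent sequences) gives $u^*=w^*=0$ a.e.\ on $D\setminus\Omega^*$, so, $\Omega^*$ being $\mathcal C^{2,1}$, the restrictions $u^*|_{\Omega^*},w^*|_{\Omega^*}$ lie in $H_0^1(\Omega^*)$ and have the same gradients a.e.\ as $u^*,w^*$.

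\emph{Passage to the limit and identification.} Fix $\psi\in C_c^\infty(\Omega^*)$; since each $I+\varphi$, $\varphi\in\Om(\Omega)$, is bi-Lipschitz with constants controlled by $1\pm c$ and $\varphi_n\to\varphi^*$ uniformly, a routine argument shows $\operatorname{supp}\psi\subset\Omega_n$ for all large $n$, so $\psi$ is admissible in \eqref{weakform} on $\Omega_n$. Testing the first equation of \eqref{weakform} on $\Omega_n$ with $(\psi,0)$, rewriting the integrals over $D$ (their integrands are supported in $\Omega_n$, where $u_n^E=\widetilde u_n$), and letting $n\to\infty$ with $u_n^E\rightharpoonup u^*$, we obtain
\[
\int_D\nabla u^*\cdot\nabla\psi\,\du x=\int_D f\psi\,\du x-\int_D\nabla u_g\cdot\nabla\psi\,\du x\qquad\text{for all }\psi\in C_c^\infty(\Omega^*),
\]
which, by the previous step and density of $C_c^\infty(\Omega^*)$ in $H_0^1(\Omega^*)$, is exactly the first line of \eqref{weakform} on $\Omega^*$; uniqueness in Proposition~\ref{prop:stateexistence} forces $u^*|_{\Omega^*}=\widetilde u(\varphi^*)$. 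Since $\overline\omega\subset\Omega_n$ for large $n$, it follows that $\widetilde u_n\to\widetilde u(\varphi^*)$ in $L^2(\omega)$, which lets us pass to the limit in the term $\int_\omega(\widetilde u_n+u_g-u_d)\psi\,\du x$; repeating the above with $(0,\psi)$ then gives $w^*|_{\Omega^*}=w(\varphi^*)$. Finally, the limit $(u^*,w^*)$ (equal to $(\widetilde u(\varphi^*),w(\varphi^*))$ on $\Omega^*$ and to $0$ elsewhere) does not depend on the extracted subsequences, so the whole sequence $E_n(\widetilde u(\varphi_n),w(\varphi_n))$ converges weakly in $H^1(D)^2$ to it.

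\emph{Main obstacle.} The only genuinely delicate point lies in the passage to the limit: showing that a fixed test function compactly supported in the limiting domain $\Omega^*$ is admissible on $\Omega_n$ for $n$ large — i.e.\ extracting sufficient control on the convergence of the moving domains $\Omega_n$ from the $C^2$-convergence of $\varphi_n$ — and then keeping careful track of the moving-domain integrals so that the weak $H^1(D)$-convergence can be applied term by term. Everything else is standard.
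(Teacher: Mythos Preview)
Your proof is correct and follows essentially the same moving-domain compactness strategy as the paper: uniform $H^1$ bounds from Proposition~\ref{prop:stateexistence} and the extensions, weak compactness, vanishing of the limit on $D\setminus\Omega^*$ via $\chi_{\Omega_n}\to\chi_{\Omega^*}$, and passage to the limit in \eqref{weakform} against test functions in $C_c^\infty(\Omega^*)$. The only cosmetic differences are that the paper argues the vanishing step via Fatou's lemma and $L^4$ convergence rather than your direct product argument, and it justifies $\operatorname{supp}\psi\subset\Omega_n$ for large $n$ through the set-theoretic identity $\Omega^*=\bigcap_m\bigcup_{n\ge m}\Omega_n$ from Lemma~\ref{lemma:charcon} instead of your bi-Lipschitz/uniform-convergence argument; your explicit remark that uniqueness of the limit upgrades subsequential to full-sequence convergence is a welcome addition.
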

	\begin{proof}
		From \Cref{prop:stateexistence} {and \eqref{emb}}, we see that
		\begin{align}
			\| E_n(\widetilde{u}(\varphi_n),w(\varphi_n)) \|_{H^1(D)^2} \le c ( \|f\|_{L^2(D)} +  \|g_r\|_{H^{\frac{3}{2}}(\Sigma)} + \|u_d\|_{L^2(\omega)} ),
		\end{align}
		for some $c>0$ independent of $n$. We thus infer the existence of $(u^*,w^*)\in H^1(D)^2$, such that a subsequence of $(E_n(\widetilde{u}(\varphi_n),w(\varphi_n)))\subset H^1(D)^2$, denoted in the same way, satisfies $E_n(\widetilde{u}(\varphi_n),w(\varphi_n)) \rightharpoonup (u^*,w^*)$ in $H^1(D)^2$. 
		
		The compact embedding $H^1(D)^2\hookrightarrow L^q(D)^2$, for $q \in [1,6)$, further implies that $E_n(\widetilde{u}(\varphi_n),w(\varphi_n)) \to (u^*,w^*)$ in $L^q(D)^2$. 
		By \Cref{lemma:charcon} $(1-\chi_{{\Omega}_n})\to (1-\chi_{{\Omega}^*})$ in $L^2(D)$. From these facts, we get by virtue of Fatou's Lemma
		\begin{align*}
			\int_D (1-\chi_{{\Omega}^*}) |u^*|^2 \du x & \le \liminf_{n\to+\infty} \int_D (1-\chi_{{\Omega}_n}) |u^*|^2 \du x\\
			&  \le \liminf_{n\to+\infty} \int_D (1-\chi_{{\Omega}_n}) |u^* - \overline{u}_n|^2 \du x\\
			& \le \lim_{n\to+\infty} \|1 -\chi_{{\Omega}_n} \|_{L^2(D)}\| u^* - \overline{u}_n \|_{L^4(D)}\to 0.
		\end{align*}
		Here we used the notation $(\overline{u}_n,\overline{w}_n) = E_n(\widetilde{u}(\varphi_n),w(\varphi_n))$. Using the same argument for  $w^*$, we infer that $(u^*,w^*) = 0$ almost everywhere in $D\backslash{\Omega(\varphi^*)}$, whence $(u^*,w^*)\in H_0^1(\Omega^*)^2$.

		We now verify that $(u^*,w^*)\in H_0^1(\Omega^*)^2$ satisfies \eqref{weakform} with $\Omega = \Omega^*$. Let $(\psi_1,\psi_2)\in C_0^\infty(\Omega^*)^2$ be an arbitrary test pair. Since $\Omega^* = \displaystyle\bigcap_{m\in\mathbb{N}}\bigcup_{n\ge m}\Omega_n$, we see that there exists $m\in\mathbb{N}$ such that for each $n\ge m$ we get $(\psi_1,\psi_2)\in C_0^\infty(\Omega_n)^2$. For each $n\ge m$, we get
		\begin{align*}
			&\int_D \nabla \overline{u}_n\cdot \nabla \psi_1 \du x + \int_D \nabla \overline{w}_n\cdot \nabla \psi_2 \du x = \int_D \chi_{\Omega_n} f \psi_1 \du x - \int_D \chi_{\Omega_n}  \nabla u_{g_r}\cdot \nabla \psi_1 \du x + \int_\omega (\overline{u}_n + u_{g_r} -u_d)\psi_2 \du x .
		\end{align*}
		Passing to the limit with convergences we have in hand allows us to thus have
		\begin{align*}
			\int_D \nabla u^*\cdot \nabla \psi_1 \du x + \int_D \nabla w^*\cdot \nabla \psi_2 \du x = \int_D \chi_{\Omega^*} f \psi_1 \du x - \int_D \chi_{\Omega^*}  \nabla u_{g_r}\cdot \nabla \psi_1 \du x + \int_\omega (u^* + u_{g_r} -u_d)\psi_2 \du x.
		\end{align*}
		Because $(u^*,w^*) = 0$ almost everywhere in $D\backslash\overline{\Omega(\varphi^*)}$ and $(u^*,w^*)\in H_0^1(\Omega^*)^2$, we further infer that
		\begin{align}
			\int_{\Omega^*} \nabla u^*\cdot \nabla \psi_1 \du x + \int_{\Omega^*} \nabla w^*\cdot \nabla \psi_2 \du x = \int_{\Omega^*} f \psi_1 \du x - \int_{\Omega^*} \nabla u_{g_r}\cdot \nabla \psi_1 \du x + \int_\omega (u^* + u_{g_r} -u_d)\psi_2 \du x.
		\end{align}
		By the density of $C_0^\infty(\Omega^*)$ in $H_0^1(\Omega^*)$, we therefore conclude that $(u^*,w^*)_{|\Omega(\varphi^*)} = (u(\varphi^*),w(\varphi^*))$.
	\end{proof}
	
	\begin{remark}\label{ramark:continuityw}
		From \Cref{remark:regularity} $w(\varphi_n)\in H_0^1(\Omega(\varphi_n))\cap H^2(\Omega(\varphi_n))$, for any $n\in \mathbb{N}$. Furthermore, we observe that $\partial_\nu w(\varphi_n) = \partial_\nu \overline w_n$ on $\Sigma$, where $\overline w_n$ is such that $(\overline u_n,\overline w_n) = E_n (\widetilde{u}(\varphi_n),w(\varphi_n))$. According to trace theorem and \eqref{uniext}, we have 
		\begin{align*}
			\|\partial_\nu w(\varphi_n)\|_{H^{\frac{1}{2}}(\Sigma)} \le \|E(\widetilde{u}(\varphi_n),w(\varphi_n))\|_{H^2(D)^2} \le c ( \|f\|_{L^2(D)} +  \|g_r\|_{H^{\frac{3}{2}}(\Sigma)} + \|u_d\|_{L^2(\omega)} ).
		\end{align*}
		As a consequence, we get the strong convergence 
		\begin{align}\label{conv:wn}
			\partial_\nu w(\varphi_n)\to \partial_\nu w(\varphi^*) \text{ in } L^2(\Sigma)\text{ and a.e. on }\Sigma.
		\end{align} 
		Similarly, the convergence of the solutions $\widetilde{u}(\varphi_n)$ can be inherited by $u(\varphi_n) = \widetilde{u}(\varphi_n) + u_{g_r}$.
	\end{remark}

	\subsection{Existence of minimizing deformation fields}
	
	In this section, we will prove the existence of solutions to the no-regret and low-regret problems. We will first establish the existence of a solution to the low-regret problem because, as we will see later, the lower bound of the functional for the no-regret problem depends on the minimum value of the low-regret problem.
	
	Let us use the notation 
	\begin{align*}
		J_\varepsilon(\varphi) := \widetilde J(\varphi) - \widetilde J(\circphi) + \F_\varepsilon^*(\partial_\nu w(\circphi) - \partial_\nu w(\varphi))
	\end{align*}
	for the functional of the low-regret problem \eqref{low-regret}.
	The following theorem gives us the existence of the minimizing deformation field for the low-regret problem.
	\begin{theorem}
		Suppose that \Cref{assumption1} holds. There exists $\varphi_\varepsilon\in\Om(\Omega)$ that minimizes $J_\varepsilon$.
	\end{theorem}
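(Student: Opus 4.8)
The plan is to use the direct method in the calculus of variations. First I would argue that $J_\varepsilon$ is bounded below on $\Om(\Omega)$: since $\widetilde J(\varphi)\ge 0$ and $\widetilde J(\circphi)$ is a fixed finite number (by \Cref{prop:stateexistence} applied to $\Omega(\circphi)$), it suffices to control $\F_\varepsilon^*(\partial_\nu w(\circphi)-\partial_\nu w(\varphi))$ from below. Writing $y=\partial_\nu w(\circphi)-\partial_\nu w(\varphi)$, one has $\F_\varepsilon^*(y)\ge \langle y,0\rangle_\Sigma-\frac{\varepsilon}{2}\|0\|^2 = 0$ since $0\in\G$ (because $g_a<0<g_b$), so in fact $J_\varepsilon(\varphi)\ge -\widetilde J(\circphi)>-\infty$ for all $\varphi$. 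Hence $m:=\inf_{\varphi\in\Om(\Omega)}J_\varepsilon(\varphi)$ is finite, and we may pick a minimizing sequence $(\varphi_n)\subset\Om(\Omega)$ with $J_\varepsilon(\varphi_n)\to m$.

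Next I would extract a convergent subsequence of $(\varphi_n)$. As explained in the paragraph preceding \Cref{lemma:charcon}, the uniform bound $\|\varphi_n\|_{W^{3,\infty}}\le c$ together with the compact embedding $C^{2,1}(\overline D;\mathbb{R}^d)\hookrightarrow C^2(\overline D;\mathbb{R}^d)$ yields (a subsequence, not relabeled, and) a limit $\varphi_\varepsilon\in C^2(\overline D;\mathbb{R}^d)$ with $\varphi_n\to\varphi_\varepsilon$ in $C^2(\overline D;\mathbb{R}^d)$; closedness of $\Om(\Omega)$ in this topology gives $\varphi_\varepsilon\in\Om(\Omega)$. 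Then I invoke \Cref{shapecontinuity} and \Cref{ramark:continuityw}: along a further subsequence, $E_n(\widetilde u(\varphi_n),w(\varphi_n))\rightharpoonup(u^*,w^*)$ in $H^1(D)^2$ with $(u^*,w^*)|_{\Omega(\varphi_\varepsilon)}=(\widetilde u(\varphi_\varepsilon),w(\varphi_\varepsilon))$, the convergence is strong in $L^q(D)^2$, and crucially $\partial_\nu w(\varphi_n)\to\partial_\nu w(\varphi_\varepsilon)$ strongly in $L^2(\Sigma)$.

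Finally I would pass to the limit in $J_\varepsilon$. For the tracking term, strong $L^q(D)$ convergence of $\overline u_n\to u^*$ and $\chi_{\Omega_n}\to\chi_{\Omega^*}$ in $L^2(D)$ (\Cref{lemma:charcon}) give $\widetilde J(\varphi_n)=\int_\omega|\overline u_n-u_d|^2\,\du x\to\int_\omega|u^*-u_d|^2\,\du x=\widetilde J(\varphi_\varepsilon)$, using that $\overline\omega\subset\Omega(\varphi)$ for every admissible $\varphi$ so the integrand is genuinely $u(\varphi_n)$ on $\omega$. The term $\widetilde J(\circphi)$ is constant. For the Fenchel term, I would show $\F_\varepsilon^*$ is continuous on $L^2(\Sigma)$ (indeed Lipschitz on bounded sets: from $\F_\varepsilon^*(y)=\langle y,\overline g(y)\rangle_\Sigma-\frac\varepsilon2\|\overline g(y)\|^2$ with $\overline g(y)=P_{[g_a,g_b]}(\tfrac1\varepsilon y)$ from \Cref{dirichletKKT}, and nonexpansiveness of $P_{[g_a,g_b]}$), hence $\F_\varepsilon^*(\partial_\nu w(\circphi)-\partial_\nu w(\varphi_n))\to\F_\varepsilon^*(\partial_\nu w(\circphi)-\partial_\nu w(\varphi_\varepsilon))$. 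Combining, $J_\varepsilon(\varphi_\varepsilon)=\lim_n J_\varepsilon(\varphi_n)=m$, so $\varphi_\varepsilon$ is a minimizer. The main obstacle is the convergence of the state-dependent terms on the moving domains — but this has essentially been done already in \Cref{shapecontinuity} and \Cref{ramark:continuityw}, so the only genuinely new point here is the continuity of $\F_\varepsilon^*$, which is elementary; alternatively one can bypass it by lower semicontinuity, since $\F_\varepsilon^*$, being a supremum of affine-minus-constant functions, is convex hence weakly l.s.c., and even sequential l.s.c. under the strong convergence we have, which already suffices because the remaining terms converge.
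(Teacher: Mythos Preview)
Your argument is correct and follows the same direct-method skeleton as the paper: bound $J_\varepsilon$ from below, extract a $C^2$-convergent minimizing sequence via compactness of $\Om(\Omega)$, and pass to the limit in each term using \Cref{shapecontinuity} and \Cref{ramark:continuityw}. The one noteworthy difference is your lower bound for the Fenchel term: the paper decomposes $\Sigma$ according to where the projection $P_{[g_a,g_b]}(\tfrac1\varepsilon W(\varphi))$ is active and checks non-negativity piecewise, whereas you simply observe that $0\in\G$ (since $g_a<0<g_b$) and hence $\F_\varepsilon^*(y)\ge\langle y,0\rangle_\Sigma-0=0$. Your route is shorter and entirely adequate here; the paper's explicit decomposition, on the other hand, makes the structure of $\F_\varepsilon^*$ visible and is reused in spirit later (e.g.\ in \Cref{lemma:low-nogap}). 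For the continuity of $\F_\varepsilon^*$ the paper argues exactly as you suggest, via a two-sided estimate using the maximizers and the uniform bound $\|g\|_{L^2(\Sigma)}\le c_\G$ on $\G$; your alternative via convexity/l.s.c.\ would also suffice but is not needed since you already have strong $L^2(\Sigma)$ convergence of $\partial_\nu w(\varphi_n)$.
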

	\begin{proof}
		For any $\varphi \in\Om(\Omega)$, we know that $- \widetilde J(\circphi) \le \widetilde J(\varphi)- \widetilde J(\circphi)$. Furthermore, we claim that $\F_\varepsilon^*(\partial_\nu w(\circphi) - \partial_\nu w(\varphi)) $ is uniformly bounded below by some constant. 
		
		Indeed, assuming without loss of regularity that the sets
		\begin{align}\label{segments}
			\begin{aligned}
				&\Sigma_1(\varphi) = \{s\in \Sigma : \partial_\nu w(\circphi)(s) - \partial_\nu w(\varphi)(s) < \varepsilon g_a \},\\
				&\Sigma_2(\varphi) = \{s\in \Sigma : \varepsilon g_a \le \partial_\nu w(\circphi)(s) - \partial_\nu w(\varphi)(s) \le \varepsilon g_b \},\\
				&\Sigma_3(\varphi) = \{s\in \Sigma : \varepsilon g_b < \partial_\nu w(\circphi)(s) - \partial_\nu w(\varphi)(s)  \},
			\end{aligned}
		\end{align}
		are of positive measure, we can write $\F_\varepsilon^*(\partial_\nu w(\circphi) - \partial_\nu w(\varphi))$ as follows
		\begin{align*}
			\F_\varepsilon^*(\partial_\nu w(\circphi) - \partial_\nu w(\varphi)) = \sum_{i=1}^3 \left\langle \partial_\nu w(\circphi) - \partial_\nu w(\varphi), \overline{g} \right\rangle_{\Sigma_i(\varphi)} - \frac{\varepsilon}{2}\|\overline{g}\|_{L^2(\Sigma_i(\varphi))}^2,
		\end{align*}
		where $\overline{g} = \arg\max_{g\in \G} \left\langle \partial_\nu w(\circphi) - \partial_\nu w(\varphi), {g} \right\rangle_{\Sigma} - \frac{\varepsilon}{2}\|{g}\|_{L^2(\Sigma)}^2$, which ---referring to  \Cref{dirichletKKT}--- is given by $$\overline{g} = P_{[g_a,g_b]}\left(\frac{1}{\varepsilon} (\partial_\nu w(\circphi) - \partial_\nu w(\varphi)) \right).$$
		Thus, by momentarily writing $W(\varphi) = \partial_\nu w(\circphi) - \partial_\nu w(\varphi)$, we have
		\begin{align*}
			\F_\varepsilon^*(W(\varphi)) =& \left\langle W(\varphi), {g}_a \right\rangle_{\Sigma_1(\varphi)} - \frac{\varepsilon}{2}\|{g}_a\|_{L^2(\Sigma_1(\varphi))}^2+ \frac{1}{2\varepsilon}\|W(\varphi)\|_{L^2(\Sigma_2(\varphi))}^2+ \left\langle  W(\varphi), {g}_b \right\rangle_{\Sigma_3(\varphi)} - \frac{\varepsilon}{2}\|{g}_b\|_{L^2(\Sigma_3(\varphi))}^2\\
			\ge & \left\langle (W(\varphi) - \varepsilon g_a), {g}_a \right\rangle_{\Sigma_1(\varphi)} + \frac{\varepsilon}{2}\|{g}_a\|_{L^2(\Sigma_1(\varphi))}^2 + \frac{1}{2\varepsilon}\|W(\varphi)\|_{L^2(\Sigma_2(\varphi))}^2 + \frac{\varepsilon}{2}\|{g}_b\|_{L^2(\Sigma_3(\varphi))}^2.
		\end{align*}
		As the terms on the right-hand side of the expression are non-negative, we infer that $J_\varepsilon(\varphi)$ is bounded from below for any $\varphi\in\Om(\Omega)$.
		We thus find a sequence $(\varphi_n)\subset  \Om(\Omega)$ such that
		\begin{align}
			\lim_{n\to+\infty} J_\varepsilon(\varphi_n) = \inf_{\varphi\in \Om(\Omega)}J_\varepsilon(\varphi) =: J_\varepsilon^*.
		\end{align}
		From the Arzel{\`a}-Ascoli theorem, we get the existence of $\varphi_\varepsilon\in\Om(\Omega)$ such that  --- up  to a subsequence --- the convergence $\varphi_n\to \varphi_\varepsilon$ in $C^2(\overline{D};\mathbb{R}^d)$ holds true. 
		
		From \Cref{shapecontinuity}, \Cref{ramark:continuityw} and the weak lower semicontinuity of the $L^2$ norm we get
		\begin{align}\label{lowsemconJ}
			\widetilde J(\varphi_\varepsilon) \le \liminf_{n\to+\infty} \widetilde J(\varphi_n).
		\end{align}
		For the term involving the Fenchel transform, we claim that the fact that $\partial_\nu w(\varphi_n)\to \partial_\nu w(\varphi_\varepsilon)$ in $L^2(\Sigma)$ and a.e. on $\Sigma$ implies 
		\begin{align}\label{confenchel}
			\F_\varepsilon^*(W(\varphi_n)) \to \F_\varepsilon^*(W(\varphi_\varepsilon)).
		\end{align}
		
		Indeed, we note that due to the constraints on the elements of $\G$ we find $c_{\G}>0$ such that $\|g\|_{L^2(\Sigma)} \le c_{\G}$ for any $g\in\G$. Let $\overline{g}(\varphi_n) = P_{[g_a,g_b]}(\frac{1}{\varepsilon}W(\varphi_n))$ and $\overline{g}(\varphi_\varepsilon) = P_{[g_a,g_b]}(\frac{1}{\varepsilon}W(\varphi_\varepsilon))$ be the maximizers of $\langle W(\varphi_n),g \rangle_\Sigma - \frac{\varepsilon}{2}\|g\|^2_{L^2(\Sigma)}$ and $\langle W(\varphi_\varepsilon),g \rangle_\Sigma - \frac{\varepsilon}{2}\|g\|^2_{L^2(\Sigma)}$, respectively. This gives us
		\begin{align*}
			&\F_\varepsilon^*(W(\varphi_n)) - \F_\varepsilon^*(W(\varphi_\varepsilon)) = \langle W(\varphi_n),\overline{g}(\varphi_n) \rangle_\Sigma - \frac{\varepsilon}{2}\|\overline{g}(\varphi_n)\|^2_{L^2(\Sigma)} - \left(\langle W(\varphi_\varepsilon),\overline{g}(\varphi_\varepsilon) \rangle_\Sigma - \frac{\varepsilon}{2}\|\overline{g}(\varphi_\varepsilon)\|^2_{L^2(\Sigma)}\right)\\
			& \le \langle \partial_\nu w(\varphi_\varepsilon) - \partial_\nu w(\varphi_n),\overline{g}(\varphi_n) \rangle_\Sigma \le c_{\G}\|\partial_\nu w(\varphi_\varepsilon) - \partial_\nu w(\varphi_n)\|_{L^2(\Sigma)}.
		\end{align*}
		Similarly, we have
		\begin{align*}
			&\F_\varepsilon^*(W(\varphi_\varepsilon)) - \F_\varepsilon^*(W(\varphi_n)) = \langle W(\varphi_\varepsilon),\overline{g}(\varphi_\varepsilon) \rangle_\Sigma - \frac{\varepsilon}{2}\|\overline{g}(\varphi_\varepsilon)\|^2_{L^2(\Sigma)} -  \left(\langle W(\varphi_n),\overline{g}(\varphi_n) \rangle_\Sigma - \frac{\varepsilon}{2}\|\overline{g}(\varphi_n)\|^2_{L^2(\Sigma)}\right)\\
			& \le \langle \partial_\nu w(\varphi_\varepsilon) - \partial_\nu w(\varphi_n),\overline{g}(\varphi_n) \rangle_\Sigma \le c_{\G}\|\partial_\nu w(\varphi_\varepsilon) - \partial_\nu w(\varphi_n)\|_{L^2(\Sigma)}.
		\end{align*}
		{The estimates above and \eqref{conv:wn} imply \eqref{confenchel}.}

		Combining \eqref{lowsemconJ} and \eqref{confenchel} implies that $\varphi_\varepsilon\in\Om(\Omega)$ is a minimizer of $J_\varepsilon$.
	\end{proof}

	For the no-regret problem, we note that due to the linearity of the operator $[g\mapsto\langle \partial_\nu w(\circphi)-\partial_\nu w(\varphi), g \rangle_\Sigma] : \G \to \mathbb{R}$ and the constraints in $\G$, given $\varphi\in\Om(\Omega)$ we find $g^*(\varphi)\in \G$ such that $$g^*(\varphi) = \displaystyle\arg\max_{g_\delta\in\G} \langle\partial_\nu w(\circphi)-\partial_\nu w(\varphi), g_\delta \rangle_\Sigma.$$
	
	So the no-regret problem \eqref{no-regret} is now translated into finding the minimizing deformation field for the functional 
	\begin{align*}
		J^*(\varphi) := \widetilde  J(\varphi)- \widetilde J(\circphi) + \langle\partial_\nu w(\circphi)-\partial_\nu w(\varphi), g^*(\varphi) \rangle_\Sigma.
	\end{align*}
	The theorem below asserts the existence of a minimizing element.
	\begin{theorem}
		Suppose that \Cref{assumption1} holds. Then there exists $\varphi^*\in\Om(\Omega)$ that minimizes $J^*$ and hence the no-regret problem admits a solution.
	\end{theorem}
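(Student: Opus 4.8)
The plan is to mimic the proof for the low-regret problem, using the minimum value $J_\varepsilon^*$ of the low-regret functional (already shown to exist) to supply the missing lower bound, and then to run the standard direct-method argument. First I would establish that $J^*$ is bounded below on $\Om(\Omega)$. The natural way is to observe that for every $\varphi$ and every $g_\delta\in\G$ one has
\begin{align*}
\widetilde J(\varphi)-\widetilde J(\circphi)+\langle \partial_\nu w(\circphi)-\partial_\nu w(\varphi),g_\delta\rangle_\Sigma \ge \widetilde J(\varphi)-\widetilde J(\circphi)+\langle \partial_\nu w(\circphi)-\partial_\nu w(\varphi),g_\delta\rangle_\Sigma-\tfrac{\varepsilon}{2}\|g_\delta\|_{L^2(\Sigma)}^2,
\end{align*}
so taking the supremum over $g_\delta\in\G$ on both sides gives $J^*(\varphi)\ge J_\varepsilon(\varphi)\ge J_\varepsilon^*$ for any fixed $\varepsilon>0$; alternatively one can reuse the explicit segment decomposition with $\overline g=g^*(\varphi)$ and bound the linear terms using $\|g^*(\varphi)\|_{L^2(\Sigma)}\le c_\G$ together with the uniform bound on $\|\partial_\nu w(\varphi)\|_{L^2(\Sigma)}$ from \Cref{ramark:continuityw}. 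Either way, $J^*$ is bounded below, so I can pick a minimizing sequence $(\varphi_n)\subset\Om(\Omega)$ with $J^*(\varphi_n)\to\inf_{\varphi}J^*(\varphi)$.

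Next, by the Arzel\`a--Ascoli theorem (equivalently the compact embedding $C^{2,1}\hookrightarrow C^2$ together with the uniform bound and closedness of $\Om(\Omega)$ in the $C^2$ topology), I extract a subsequence with $\varphi_n\to\varphi^*\in\Om(\Omega)$ in $C^2(\overline D;\mathbb R^d)$. Invoking \Cref{shapecontinuity} and \Cref{ramark:continuityw} gives $\widetilde J(\varphi^*)\le\liminf_n\widetilde J(\varphi_n)$ and the strong convergence $\partial_\nu w(\varphi_n)\to\partial_\nu w(\varphi^*)$ in $L^2(\Sigma)$. It then remains to pass to the limit in the term $\langle\partial_\nu w(\circphi)-\partial_\nu w(\varphi_n),g^*(\varphi_n)\rangle_\Sigma$. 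Since $\|g^*(\varphi_n)\|_{L^2(\Sigma)}\le c_\G$, a subsequence of $(g^*(\varphi_n))$ converges weakly in $L^2(\Sigma)$, and because $\G$ is convex and closed (hence weakly closed) in $L^2(\Sigma)$ the weak limit $\widehat g$ lies in $\G$; combining strong convergence of $W(\varphi_n):=\partial_\nu w(\circphi)-\partial_\nu w(\varphi_n)$ with weak convergence of $g^*(\varphi_n)$ yields $\langle W(\varphi_n),g^*(\varphi_n)\rangle_\Sigma\to\langle W(\varphi^*),\widehat g\rangle_\Sigma$.

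Finally I would reconcile the limit with the definition of $J^*$. By definition of $g^*(\varphi^*)$ as the maximizer, $\langle W(\varphi^*),\widehat g\rangle_\Sigma\le\langle W(\varphi^*),g^*(\varphi^*)\rangle_\Sigma$, so
\begin{align*}
J^*(\varphi^*)=\widetilde J(\varphi^*)-\widetilde J(\circphi)+\langle W(\varphi^*),g^*(\varphi^*)\rangle_\Sigma\ge \widetilde J(\varphi^*)-\widetilde J(\circphi)+\langle W(\varphi^*),\widehat g\rangle_\Sigma,
\end{align*}
and combining this with the liminf inequality for $\widetilde J$ and the just-proved convergence of the inner-product term shows $J^*(\varphi^*)$ could only be $\ge\liminf_n J^*(\varphi_n)=\inf_\varphi J^*(\varphi)$, which forces equality and hence $\varphi^*$ is a minimizer. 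The subtle point here — and the only genuine obstacle — is the direction of the inequality in this last step: the inner-product term is only upper semicontinuous along the minimizing sequence (through the weak/strong pairing plus the maximality of $g^*$), which is exactly the wrong direction for lower semicontinuity of $J^*$. This is resolved precisely because $g^*(\varphi^*)$ is defined as a \emph{supremum}: any admissible competitor, in particular the weak limit $\widehat g$, gives a lower value, so the estimate $\langle W(\varphi^*),\widehat g\rangle_\Sigma\le\langle W(\varphi^*),g^*(\varphi^*)\rangle_\Sigma$ restores the needed lower semicontinuity of $J^*$. I would make sure to state this maximality comparison explicitly, since it is the crux of the argument; the remaining estimates (boundedness below, compactness of $\Om(\Omega)$, continuity of the state maps) are direct quotations of results already established above.
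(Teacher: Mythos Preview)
Your lower bound and compactness steps are fine and match the paper. The genuine gap is in your final step, and you have in fact identified it correctly but then resolved it in the wrong direction. You need lower semicontinuity, i.e.\ $J^*(\varphi^*)\le\liminf_n J^*(\varphi_n)$. What your weak-limit argument produces is
\[
\lim_n\langle W(\varphi_n),g^*(\varphi_n)\rangle_\Sigma=\langle W(\varphi^*),\widehat g\rangle_\Sigma\le\langle W(\varphi^*),g^*(\varphi^*)\rangle_\Sigma,
\]
which is an \emph{upper} bound on the limit by the value at $\varphi^*$; together with $\widetilde J(\varphi^*)\le\liminf_n\widetilde J(\varphi_n)$ this yields only that both $J^*(\varphi^*)$ and $\liminf_n J^*(\varphi_n)$ dominate the common quantity $\widetilde J(\varphi^*)-\widetilde J(\circphi)+\langle W(\varphi^*),\widehat g\rangle_\Sigma$, and no comparison between them follows. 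The maximality of $g^*(\varphi^*)$ is exactly the wrong tool here.

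The fix is to use the maximality of $g^*(\varphi_n)$ with the competitor $g^*(\varphi^*)$: for each $n$,
\[
\langle W(\varphi_n),g^*(\varphi_n)\rangle_\Sigma\ge\langle W(\varphi_n),g^*(\varphi^*)\rangle_\Sigma\xrightarrow[n\to\infty]{}\langle W(\varphi^*),g^*(\varphi^*)\rangle_\Sigma,
\]
the convergence coming from the strong convergence $W(\varphi_n)\to W(\varphi^*)$ in $L^2(\Sigma)$ alone (no weak limit of $g^*(\varphi_n)$ is needed). This gives $\liminf_n\langle W(\varphi_n),g^*(\varphi_n)\rangle_\Sigma\ge\langle W(\varphi^*),g^*(\varphi^*)\rangle_\Sigma$, which combined with the liminf inequality for $\widetilde J$ yields $J^*(\varphi^*)\le\liminf_n J^*(\varphi_n)$. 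Equivalently, observe that $J^*(\varphi)=\sup_{g\in\G}\big[\widetilde J(\varphi)-\widetilde J(\circphi)+\langle W(\varphi),g\rangle_\Sigma\big]$ is a supremum of lower semicontinuous functions of $\varphi$ and hence itself lower semicontinuous. The paper goes slightly further and proves the two-sided estimate $|\langle W(\varphi_n),g^*(\varphi_n)\rangle_\Sigma-\langle W(\varphi^*),g^*(\varphi^*)\rangle_\Sigma|\le c_\G\|\partial_\nu w(\varphi_n)-\partial_\nu w(\varphi^*)\|_{L^2(\Sigma)}$, using maximality at both $\varphi_n$ and $\varphi^*$, obtaining full continuity of the supremum term; only the one-sided inequality above is actually needed.
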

	
	\begin{proof}
		We get the lower bound for $J^*(\varphi)$ as follows: first by definition of $g^*(\varphi)\in\G$ 
		\begin{align*}
			J^*(\varphi) \ge \widetilde J(\varphi)- \widetilde J(\circphi) + \langle \partial_\nu w(\circphi)-\partial_\nu w(\varphi), g \rangle_\Sigma - \frac{\varepsilon}{2}\|g\|_{L^2(\Sigma)}^2 \quad\forall g\in\G.
		\end{align*}
		By choosing $g = P_{[g_a,g_b]}\left(\frac{1}{\varepsilon}(\partial_\nu w(\circphi)-\partial_\nu w(\varphi)) \right)\in \G$ {and using the fact that $\varphi_\varepsilon$ is a minimizer for $J_\varepsilon$} we get 
		\begin{align}\label{noglo}
			\begin{aligned}
				J^*(\varphi) &\ge \widetilde J(\varphi)- \widetilde J(\circphi) + \F_\varepsilon^*(\partial_\nu w(\circphi)-\partial_\nu w(\varphi)) \\
				&\ge  \widetilde J(\varphi_\varepsilon) - \widetilde  J(\circphi) + \F_\varepsilon^*(\partial_\nu w(\circphi)-\partial_\nu w(\varphi_\varepsilon)) = J_\varepsilon(\varphi_\varepsilon).
			\end{aligned}
		\end{align}
		
		Having been able to establish the boundedness of $J^*$ from below, we find an infimizing sequence $(\varphi_n)\subset  \Om(\Omega)$ such that
		\begin{align}
			\lim_{n\to+\infty} J^*(\varphi_n) = \inf_{\varphi\in \Om(\Omega)}J^*(\varphi)  .
		\end{align}
		
		Again, the Arzel{\'a}-Ascoli theorem implies that, up to a subsequence, $\varphi_n \to \varphi^*$ in $C^2(\overline{D};\mathbb{R}^d)$ for some $\varphi^*\in \Om(\Omega)$. The weak lower semicontinuity of the $L^2$ norm and  \Cref{shapecontinuity} imply
		\begin{align*}
			\widetilde J(\varphi^*) - \widetilde J(\circphi) \le \liminf_{n\to+\infty} \widetilde J(\varphi_n) - \widetilde J(\circphi).
		\end{align*}
		\Cref{ramark:continuityw} implies $\langle\partial_\nu w(\circphi)-\partial_\nu w(\varphi_n), g^*(\varphi_n) \rangle_\Sigma \to \langle\partial_\nu w(\circphi)-\partial_\nu w(\varphi^*), g^*(\varphi^*) \rangle_\Sigma$.  Indeed, since $g^*(\varphi_n) = \displaystyle\arg\max_{g\in\G} \langle\partial_\nu w(\circphi)-\partial_\nu w(\varphi_n), g \rangle_\Sigma,$ we get
		\begin{align*}
			& \langle\partial_\nu w(\circphi)-\partial_\nu w(\varphi^*), g^*(\varphi^*) \rangle_\Sigma - \langle\partial_\nu w(\circphi)-\partial_\nu w(\varphi_n), g^*(\varphi_n) \rangle_\Sigma 
			\le \langle \partial_\nu w(\varphi_n)-\partial_\nu w(\varphi^*), g^*(\varphi^*) \rangle_\Sigma\\
			& \le c_{\G}\|\partial_\nu w(\varphi_n)-\partial_\nu w(\varphi^*)\|_{L^2(\Sigma)}.
		\end{align*}
		Similarly, because $g^*(\varphi^*) = \displaystyle\arg\max_{g\in\G} \langle\partial_\nu w(\circphi)-\partial_\nu w(\varphi^*), g \rangle_\Sigma,$ we have
		\begin{align*}
			& \langle\partial_\nu w(\circphi) - \partial_\nu w(\varphi_n), g^*(\varphi_n) \rangle_\Sigma - \langle\partial_\nu w(\circphi) - \partial_\nu w(\varphi^*), g^*(\varphi^*) \rangle_\Sigma 
			\le \langle \partial_\nu w(\varphi^*)-\partial_\nu w(\varphi_n), g^*(\varphi_n) \rangle_\Sigma\\
			& \le c_{\G}\|\partial_\nu w(\varphi_n)-\partial_\nu w(\varphi^*)\|_{L^2(\Sigma)}.
		\end{align*}
		The computations above sum up to the following estimate
		\begin{align*}
			|\langle\partial_\nu w(\circphi) - \partial_\nu w(\varphi_n), g^*(\varphi_n) \rangle_\Sigma - \langle\partial_\nu w(\circphi) - \partial_\nu w(\varphi^*), g^*(\varphi^*) \rangle_\Sigma| \le c_{\G}\|\partial_\nu w(\varphi_n)-\partial_\nu w(\varphi^*)\|_{L^2(\Sigma)}.
		\end{align*}
		The right-hand side goes to zero per \Cref{ramark:continuityw}, which proves our claim.
		Therefore, $\varphi^*$ is a minimizer for $J^*$.
	\end{proof}

	\section{From low-regret to no-regret}\label{section:low-to-no}
	
	In this section, we provide one of the main results of this paper. That is, we show that as $\varepsilon\to 0$, the sequence $(\varphi_\varepsilon)\subset \Om(\Omega)$ of minimizing deformations for the low-regret problem converges to a deformation field that minimizes $J^*$.
	
	We begin by looking at the gap between the minimum values of $J_\varepsilon$ and $J^*$.
	\begin{lemma}\label{lemma:low-nogap}
		Suppose that $\varphi_\varepsilon\in\Om(\Omega)$ and $\varphi^*\in\Om(\Omega)$ are minimizers for $J_\varepsilon$ and $J^*$, respectively. Then we have the following estimate
		\begin{align}
			|J^*(\varphi^*) - J_\varepsilon(\varphi_\varepsilon)| \le c {\varepsilon},
		\end{align}
		for some $c>0$ not dependent on $\varepsilon>0$.
	\end{lemma}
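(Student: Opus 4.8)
The plan is to sandwich both quantities. The key observation is that $J^\ast$ and $J_\varepsilon$ differ only in how the supremum over $\G$ is treated: $J^\ast$ keeps the linear term $\langle W(\varphi),g\rangle_\Sigma$ with its supremum, whereas $J_\varepsilon$ penalizes it by $\frac{\varepsilon}{2}\|g\|_{L^2(\Sigma)}^2$ before taking the supremum, i.e.\ $\F_\varepsilon^\ast(W(\varphi)) = \sup_{g\in\G}[\langle W(\varphi),g\rangle_\Sigma - \frac{\varepsilon}{2}\|g\|^2_{L^2(\Sigma)}]$. Writing $g^\ast(\varphi)$ for the maximizer in the no-regret problem and $c_\G>0$ for the uniform bound $\|g\|_{L^2(\Sigma)}\le c_\G$ on $\G$ (already used in the existence proofs), the chain of inequalities
\begin{align*}
\F_\varepsilon^\ast(W(\varphi)) \le \langle W(\varphi),g^\ast(\varphi)\rangle_\Sigma \le \F_\varepsilon^\ast(W(\varphi)) + \frac{\varepsilon}{2}c_\G^2
\end{align*}
holds for every $\varphi\in\Om(\Omega)$: the left inequality is immediate since the penalty term is nonnegative, and the right one follows by plugging $g^\ast(\varphi)$ into the definition of $\F_\varepsilon^\ast$ and bounding its penalty by $\frac{\varepsilon}{2}c_\G^2$. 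Consequently $0 \le J^\ast(\varphi) - J_\varepsilon(\varphi) \le \frac{\varepsilon}{2}c_\G^2$ for all $\varphi$, since $J^\ast$ and $J_\varepsilon$ share the common term $\widetilde J(\varphi)-\widetilde J(\circphi)$.

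First I would establish one direction: applying the above at $\varphi=\varphi^\ast$ gives $J_\varepsilon(\varphi_\varepsilon) \le J_\varepsilon(\varphi^\ast) \le J^\ast(\varphi^\ast)$, using that $\varphi_\varepsilon$ minimizes $J_\varepsilon$. This is essentially the chain already recorded in \eqref{noglo}, so $J^\ast(\varphi^\ast) - J_\varepsilon(\varphi_\varepsilon) \ge 0$. For the reverse direction, apply the inequality at $\varphi=\varphi_\varepsilon$:
\begin{align*}
J^\ast(\varphi^\ast) \le J^\ast(\varphi_\varepsilon) \le J_\varepsilon(\varphi_\varepsilon) + \tfrac{\varepsilon}{2}c_\G^2,
\end{align*}
where the first step uses that $\varphi^\ast$ minimizes $J^\ast$. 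Combining the two bounds yields $0 \le J^\ast(\varphi^\ast) - J_\varepsilon(\varphi_\varepsilon) \le \frac{\varepsilon}{2}c_\G^2$, which is the claim with $c = \frac{1}{2}c_\G^2$, independent of $\varepsilon$.

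The argument is short and the only genuinely needed inputs are: (i) the uniform $L^2(\Sigma)$-bound $c_\G$ on $\G$, which comes from $g_a,g_b\in H^{3/2}(\Sigma)$; (ii) the existence of the minimizers $\varphi_\varepsilon$ and $\varphi^\ast$ and of the no-regret maximizer $g^\ast(\varphi)$, all established in \Cref{section:existence}; and (iii) the elementary two-sided comparison between $\F_\varepsilon^\ast$ and the un-penalized supremum. The main (and essentially only) obstacle is bookkeeping: making sure one uses the minimizing property of $\varphi_\varepsilon$ at $\varphi^\ast$ and the minimizing property of $\varphi^\ast$ at $\varphi_\varepsilon$ in the correct order so that both inequalities point the right way; no compactness or elliptic regularity beyond what has already been proved is required here.
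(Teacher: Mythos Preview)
Your proof is correct and follows essentially the same approach as the paper: both use the pointwise comparison $0 \le J^\ast(\varphi) - J_\varepsilon(\varphi) \le \tfrac{\varepsilon}{2}c_\G^2$ (obtained by comparing the un-penalized supremum with $\F_\varepsilon^\ast$) and then invoke the minimality of $\varphi^\ast$ at $\varphi_\varepsilon$ together with the already established inequality $J^\ast(\varphi^\ast) \ge J_\varepsilon(\varphi_\varepsilon)$ from \eqref{noglo}. The paper simply expands the terms more explicitly, arriving at the same constant $c = \tfrac{1}{2}c_\G^2$.
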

	\begin{proof}
		Let us first underline the fact that, due to \eqref{noglo}, $J^*(\varphi^*) \ge J_\varepsilon(\varphi_\varepsilon)$. This implies that
		\begin{align*}
			| J^*(\varphi^*) & - J_\varepsilon(\varphi_\varepsilon) | = J^*(\varphi^*) - J_\varepsilon(\varphi_\varepsilon) \le J^*(\varphi_\varepsilon) - J_\varepsilon(\varphi_\varepsilon)\\ =&\, \| u(\varphi_\varepsilon) - u_d \|_{L^2(\omega)}^2 + \langle\partial_\nu w(\circphi)-\partial_\nu w(\varphi_\varepsilon), g^*(\varphi_\varepsilon) \rangle_\Sigma \\
			&- \left( \| u(\varphi_\varepsilon) - u_d \|_{L^2(\omega)}^2 + \langle\partial_\nu w(\circphi)-\partial_\nu w(\varphi_\varepsilon), \overline{g}(\varphi_\varepsilon) \rangle_\Sigma - \frac{\varepsilon}{2}\|\overline{g}(\varphi_\varepsilon)\|_{L^2(\Sigma)}^2 \right)\\
			=&\, \langle\partial_\nu w(\circphi) - \partial_\nu w(\varphi_\varepsilon), g^*(\varphi_\varepsilon) \rangle_\Sigma - \left(\langle\partial_\nu w(\circphi)-\partial_\nu w(\varphi_\varepsilon), \overline{g}(\varphi_\varepsilon) \rangle_\Sigma - \frac{\varepsilon}{2}\|\overline{g}(\varphi_\varepsilon)\|_{L^2(\Sigma)}^2 \right),
		\end{align*}
		where
		\begin{align*}
			& g^*(\varphi_\varepsilon) = \arg\max_{g_\delta\in\G}\langle\partial_\nu w(\circphi)-\partial_\nu w(\varphi_\varepsilon), g_\delta\rangle_\Sigma \; \text{ and }\\
			& \overline{g}(\varphi_\varepsilon) = \arg\max_{g_\delta\in\G}\left(  \langle\partial_\nu w(\circphi)-\partial_\nu w(\varphi_\varepsilon), g_\delta \rangle_\Sigma - \frac{\varepsilon}{2}\|g_\delta\|_{L^2(\Sigma)}^2 \right).
		\end{align*}
		The desired estimate is obtained as follows:
		\begin{align*}
			| J^*(\varphi^*)  - J_\varepsilon(\varphi_\varepsilon) | \le \langle\partial_\nu w(\circphi) - \partial_\nu w(\varphi_\varepsilon), g^*(\varphi_\varepsilon) \rangle_\Sigma - \left( \langle\partial_\nu w(\circphi) - \partial_\nu w(\varphi_\varepsilon), \overline{g}(\varphi_\varepsilon) \rangle_\Sigma - \frac{\varepsilon}{2}\|\overline{g}(\varphi_\varepsilon)\|_{L^2(\Sigma)}^2 \right)\\
			\le  \langle\partial_\nu w(\circphi) - \partial_\nu w(\varphi_\varepsilon), g^*(\varphi_\varepsilon) \rangle_\Sigma - \left( \langle\partial_\nu w(\circphi)-\partial_\nu w(\varphi_\varepsilon), g^*(\varphi_\varepsilon)  \rangle_\Sigma - \frac{\varepsilon}{2}\|g^*(\varphi_\varepsilon) \|_{L^2(\Sigma)}^2 \right) \le \frac{c_{\G}^2}{2}\varepsilon.
		\end{align*}
	\end{proof}
	
	Due to the compactness of $\Om(\Omega)$ with respect to the $C^2(\overline{D};\mathbb{R}^d)$-topology, we see that the sequence $(\varphi_\varepsilon)_{\varepsilon>0}\subset \Om(\Omega)$ has a subsequence converging to some $\varphi_0\in \Om(\Omega)$, i.e. $\varphi_\varepsilon\to \varphi_0$ in $\Om(\Omega)$ as $\varepsilon\to 0$. Our main result shows that $\varphi_0\in\Om(\Omega)$ minimizes $J^*$.
	\begin{theorem}
		The deformation field $\varphi_0\in\Om(\Omega)$ is a minimizer for the no-regret problem.
	\end{theorem}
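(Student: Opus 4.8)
The plan is to prove that $J^*(\varphi_0)\le J^*(\varphi^*)$; since $\varphi^*$ is a global minimizer of $J^*$, this inequality is automatically an equality and $\varphi_0$ solves the no-regret problem. Throughout I write $W(\varphi):=\partial_\nu w(\circphi)-\partial_\nu w(\varphi)$ and introduce the limiting conjugate $\F_0^*(y):=\sup_{g_\delta\in\G}\langle y,g_\delta\rangle_\Sigma$, so that $J^*(\varphi)=\widetilde J(\varphi)-\widetilde J(\circphi)+\F_0^*(W(\varphi))$ and $J_\varepsilon(\varphi)=\widetilde J(\varphi)-\widetilde J(\circphi)+\F_\varepsilon^*(W(\varphi))$ for every $\varphi\in\Om(\Omega)$; the maximizer defining $\F_0^*(y)$ exists by weak compactness of $\G$, exactly as for the functional $g^*(\varphi)$ used in the no-regret existence theorem.

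Two elementary facts about the conjugates are needed. First, since $\|g_\delta\|_{L^2(\Sigma)}\le c_\G$ for all $g_\delta\in\G$, evaluating $\F_\varepsilon^*$ at the maximizer of $\langle y,\cdot\rangle_\Sigma$ and discarding the nonnegative quadratic term gives the $\varphi$-uniform sandwich $\F_0^*(y)-\tfrac{\varepsilon}{2}c_\G^2\le\F_\varepsilon^*(y)\le\F_0^*(y)$ for all $y\in L^2(\Sigma)$; in particular $0\le J^*(\varphi)-J_\varepsilon(\varphi)\le\tfrac{\varepsilon}{2}c_\G^2$ uniformly in $\varphi$. Second, repeating verbatim the two-sided estimate from the existence proof of the low-regret minimizer (where now the $\tfrac{\varepsilon}{2}\|\cdot\|_{L^2(\Sigma)}^2$ contributions cancel), the map $\varphi\mapsto\F_0^*(W(\varphi))$ satisfies $|\F_0^*(W(\varphi_1))-\F_0^*(W(\varphi_2))|\le c_\G\|\partial_\nu w(\varphi_1)-\partial_\nu w(\varphi_2)\|_{L^2(\Sigma)}$.

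Now apply \Cref{shapecontinuity} and \Cref{ramark:continuityw} along the extracted subsequence $\varphi_\varepsilon\to\varphi_0$ in $C^2(\overline D;\mathbb{R}^d)$: these yield $\partial_\nu w(\varphi_\varepsilon)\to\partial_\nu w(\varphi_0)$ strongly in $L^2(\Sigma)$, hence $\F_0^*(W(\varphi_\varepsilon))\to\F_0^*(W(\varphi_0))$ by the Lipschitz bound, together with $\widetilde J(\varphi_0)\le\liminf_{\varepsilon\to0}\widetilde J(\varphi_\varepsilon)$ by the weak lower semicontinuity of the $L^2$-norm (as in \eqref{lowsemconJ}). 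Adding the two gives $J^*(\varphi_0)\le\liminf_{\varepsilon\to0}J^*(\varphi_\varepsilon)$. On the other hand, the sandwich gives $J^*(\varphi_\varepsilon)\le J_\varepsilon(\varphi_\varepsilon)+\tfrac{\varepsilon}{2}c_\G^2$, while \Cref{lemma:low-nogap} gives $J_\varepsilon(\varphi_\varepsilon)\to J^*(\varphi^*)$; hence $\limsup_{\varepsilon\to0}J^*(\varphi_\varepsilon)\le J^*(\varphi^*)$. Chaining these, $J^*(\varphi_0)\le J^*(\varphi^*)=\inf_{\varphi\in\Om(\Omega)}J^*(\varphi)$, which is the claim, and a routine subsequence-of-subsequence argument shows the conclusion does not depend on the particular subsequence extracted.

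The only genuinely substantive input is the strong $L^2(\Sigma)$ convergence of the normal traces $\partial_\nu w(\varphi_\varepsilon)$, which is already supplied by \Cref{ramark:continuityw} through the $H^2$-regularity of $w$ and the uniform boundedness of the extension operators; the rest is bookkeeping of the $O(\varepsilon)$ discrepancy between $\F_\varepsilon^*$ and $\F_0^*$ prepared in \Cref{lemma:low-nogap}, so I do not anticipate a serious obstacle. The one point to watch is making sure every constant ($c_\G$ in particular) is uniform in both $\varphi$ and $\varepsilon$, which it is because it comes solely from the box bounds $g_a\le g_\delta\le g_b$ defining $\G$.
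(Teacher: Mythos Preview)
Your proof is correct and follows essentially the same approach as the paper: both rely on \Cref{lemma:low-nogap}, the state continuity from \Cref{shapecontinuity} and \Cref{ramark:continuityw}, and a comparison between $J_\varepsilon$ and $J^*$ that is uniform up to an $O(\varepsilon)$ error. Your packaging via the sandwich $0\le J^*(\varphi)-J_\varepsilon(\varphi)\le \tfrac{\varepsilon}{2}c_\G^2$ and the $\liminf/\limsup$ chain is a slightly cleaner rearrangement of the paper's direct two-sided estimate of $|J_\varepsilon(\varphi_\varepsilon)-J^*(\varphi_0)|$, but the substance is identical.
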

	\begin{proof}
		Knowing that $\varphi^*$ is a minimizer for $J^*$, we prove that $\varphi_0$ is also a minimizer by showing that $J^*(\varphi_0) = J^*(\varphi^*)$. Let
		\begin{align*}
			g^*(\varphi_0) = \arg\max_{g_\delta\in\G}\langle\partial_\nu w(\circphi)-\partial_\nu w(\varphi_0), g_\delta \rangle_\Sigma.
		\end{align*}
		Hence, we have by \Cref{lemma:low-nogap}
		\begin{align*}
			|J^*(\varphi^*) - J^*(\varphi_0)| \le |J^*(\varphi^*) - J_\varepsilon(\varphi_\varepsilon)| + |J_\varepsilon(\varphi_\varepsilon) - J^*(\varphi_0)| \le c\varepsilon + |J_\varepsilon(\varphi_\varepsilon) - J^*(\varphi_0)|.
		\end{align*}
		The following computations give us estimates for the second term on the right-hand side above: first, we obtain
		\begin{align*}
			J_\varepsilon(\varphi_\varepsilon) - J^*(\varphi_0) & =  \| u(\varphi_\varepsilon) - u_d \|_{L^2(\omega)}^2 + \langle\partial_\nu w(\circphi)-\partial_\nu w(\varphi_\varepsilon), \overline{g}(\varphi_\varepsilon) \rangle_\Sigma - \frac{\varepsilon}{2}\|\overline{g}(\varphi_\varepsilon)\|_{L^2(\Sigma)}^2\\
			&\quad - \left(\| u(\varphi_0) - u_d \|_{L^2(\omega)}^2 + \langle\partial_\nu w(\circphi)-\partial_\nu w(\varphi_0), g^*(\varphi_0) \rangle_\Sigma \right)\\
			& \le \langle u(\varphi_\varepsilon) - u(\varphi_0), u(\varphi_\varepsilon) + u(\varphi_0) - 2u_d \rangle_{\omega} +  \langle\partial_\nu w(\varphi_0)-\partial_\nu w(\varphi_\varepsilon), \overline{g}(\varphi_\varepsilon) \rangle_\Sigma\\
			& \le c\| u(\varphi_\varepsilon) - u(\varphi_0)\|_{L^2(\omega)} +  c_{\G} \|\partial_\nu w(\varphi_0)-\partial_\nu w(\varphi_\varepsilon)\|_{L^2(\Sigma)},
		\end{align*}
		where $c>0$ is a constant dependent only on the terms in the right hand side of \eqref{estimate:sol} and is independent of $\varepsilon$. Similarly, we have
		\begin{align*}
			J^*(\varphi_0) - J_\varepsilon(\varphi_\varepsilon) & =  \| u(\varphi_0) - u_d \|_{L^2(\omega)}^2 + \langle\partial_\nu w(\circphi)-\partial_\nu w(\varphi_0), g^*(\varphi_0) \rangle_\Sigma\\
			&\quad - \left(\| u(\varphi_\varepsilon) - u_d \|_{L^2(\omega)}^2 + \langle\partial_\nu w(\circphi)-\partial_\nu w(\varphi_\varepsilon), \overline{g}(\varphi_\varepsilon) \rangle_\Sigma - \frac{\varepsilon}{2}\|\overline{g}(\varphi_\varepsilon)\|_{L^2(\Sigma)}^2 \right)\\
			& \le \langle u(\varphi_0) - u(\varphi_\varepsilon), u(\varphi_0) + u(\varphi_\varepsilon) - 2u_d \rangle_{\omega} +  \langle\partial_\nu w(\varphi_\varepsilon)-\partial_\nu w(\varphi_0), g^*(\varphi_0) \rangle_\Sigma + \frac{c_{\G}^2}{2} \varepsilon\\
			& \le c\| u(\varphi_0) - u(\varphi_\varepsilon)\|_{L^2(\omega)} +  c_{\G} \|\partial_\nu w(\varphi_\varepsilon)-\partial_\nu w(\varphi_0)\|_{L^2(\Sigma)} + \frac{c_{\G}^2}{2} \varepsilon,
		\end{align*}
		where $c>0$, as in the previous case, is not dependent on $\varepsilon$.
		Summing up the estimates imply
		\begin{align*}
			|J^*(\varphi^*) - J^*(\varphi_0)| \le c\| u(\varphi_0) - u(\varphi_\varepsilon)\|_{L^2(\omega)} +  c_{\G} \|\partial_\nu w(\varphi_\varepsilon)-\partial_\nu w(\varphi_0)\|_{L^2(\Sigma)} + \frac{c_{\G}^2}{2} \varepsilon.
		\end{align*}
		Taking $\varepsilon\to 0$, {from \Cref{shapecontinuity} and \eqref{conv:wn} the expression on the right-hand side tends to zero.}
	\end{proof}
	
	\section{Optimality condition for the low-regret problem}\label{section:derivative}

	For a fixed $\Omega\subset D$ we consider the functional $j:\Om(\Omega) \subset C^{1,1}(\overline{D};\mathbb{R}^d)\to \mathbb{R}$. {We say that it has a G{\^ a}teaux derivative at $\varphi\in C^{1,1}(\overline{D};\mathbb{R}^d)$ in the direction $\delta\varphi\in C^{1,1}(\overline{D};\mathbb{R}^d)$ if the following limit exists:
	\begin{align}
		d_\varphi j(\varphi)\delta\varphi = \lim_{t \searrow 0} \frac{ j(\varphi + t\delta\varphi) - j(\varphi)}{t}.
	\end{align}
	The functional $j$ is said to be G{\^a}teaux differentiable at $\varphi\in C^{1,1}(\overline{D};\mathbb{R}^d)$ if the derivative exists for any direction $\delta\varphi\in C^{1,1}(\overline{D};\mathbb{R}^d)$ and the map $\delta\varphi\mapsto d_\varphi j(\varphi)\delta\varphi $ is linear and bounded.}
	
	{We note that the derivative defined above has been discussed in \cite[Chapter 9 Section 3.3]{delfour2011}, and can be related to the so-called shape derivatives---both in the context Eulerian and Hadamard semiderivatives---for shape functionals, see \cite[Theorem 9.3.4]{delfour2011}.}

	Let $\Omega \in \mathcal{P}_{\Sigma} := \{ \Omega\in\mathcal{P}(D): \Omega\text{ is an annular domain with }\Sigma\subset\partial\Omega \text{ and of class }C^{2,1}  \}$. With \eqref{system:couple} in mind, we define the operator $E[\Omega]: H_0^1(\Omega)^2\to H^{-1}(\Omega)^2 $ by
	\begin{align*}
		\langle E [\Omega] (\widetilde{u},w) , (\psi_1,\psi_2)\rangle_{H_0^1(\Omega)^2}
		= \int_\Omega \nabla \widetilde{u}\cdot \nabla \psi_1 \du x + \int_\Omega \nabla w\cdot \nabla \psi_2 \du x &\\  - \int_\Omega f \psi_1 \du x + \int_\Omega \nabla u_{g_r}\cdot\nabla \psi_1\du x- \int_\omega (\widetilde{u} + u_{g_r} - u_d)\psi_2 \du x.&
	\end{align*}
	We can thus define the design-to-state operator $\mathcal{S}: \mathcal{P}_{\Sigma} \to H^1(D)^2$ by $\mathcal{S}(\Omega) = (\widetilde{u},w)\in H_0^1(\Omega)^2\subset H^1(D)^2$ if and only if $E[\Omega](\widetilde{u},w) = 0$ in $H^{-1}(\Omega)^2$.
	\medskip
	
	\noindent\textbf{Function space parametrization.} Let $\Omega \subset D$ be fixed. For any $\varphi\in C^{2,1}(\overline{D};\mathbb{R}^d)$ with $\|\varphi\|_{C^{2,1}} \le c <1$ we use the transformation $\tau_\varphi = I + \varphi$ and the notation $\Omega(\varphi) = \tau_\varphi(\Omega)$, and define the function space parametrizations (see c.f. \cite[Section 2.3.1 Lemmata 3.1--3.2]{necas2012} and \cite[Section 2.3.6 Corollary 3.1]{necas2012})
	\begin{align*}
		H_0^1(\Omega) = \{ \overline{\psi}\circ\tau_\varphi : \overline{\psi}\in H_0^1(\Omega(\varphi))  \},\,
		L^2(\Omega) = \{ \overline{\psi}\circ\tau_\varphi : \overline{\psi}\in L^2(\Omega(\varphi))  \}.
	\end{align*}
	We now define the transformed operator $E_\Omega: \Om(\Omega)\times H_0^1(\Omega)^2 \to H^{-1}(\Omega)^2$ as $$E_{\Omega}[\varphi,(\tilde{u},w)] = (E[\tau_\varphi(\Omega)] (\tilde{u}\circ\tau_\varphi^{-1},w\circ\tau_\varphi^{-1}))\circ \tau_\varphi.$$
	We can  express  the action of $E_\Omega$ as follows:
	\begin{align}\label{operator:Eom}
		\begin{aligned}
		\langle E_{\Omega}[\varphi,(\tilde{u},w)],(\psi_1,\psi_2) \rangle_{H_0^1(\Omega)^2} = \int_{\Omega} A(\varphi)\nabla \tilde{u}\cdot \nabla\psi_1 \du x + \int_{\Omega} A(\varphi)\nabla w\cdot \nabla\psi_2 \du x &\\
		- \int_\Omega b(\varphi)\hf(\varphi) \psi_1 \du x + \int_{\Omega}A(\varphi)\nabla \hu_{g_r}(\varphi)\cdot\nabla \psi_1 - \int_\omega b(\varphi)( \tilde{u} + \hu_{g_r}(\varphi) - u_d)\psi_2 \du x ,&
		\end{aligned}
	\end{align}
	for $(\psi_1,\psi_2)\in H_0^1(\Omega)^2$, where $A(\varphi) = b(\varphi)D\tau_\varphi^{-1}(D\tau_\varphi^{-1})^{\top}$, $b(\varphi) = \mathrm{det}(D\tau_\varphi)$, $\hf(\varphi) = f\circ\tau_\varphi$ and $\hu_{g_r}(\varphi) = u_{g_r}\circ\tau_\varphi$.

	To proceed we impose additional assumptions on the source function $f$, the given Dirichlet data $g_r$, and the desired profile $u_d$.
	\begin{assumption}\label{assumption2}
		We assume that $f\in H^1(D)$ and $g_r \in H^{\frac{5}{2}}(\Sigma)$, which implies that $u_{g_r}$ is in $H^{3}(D)$. Furthermore, we assume that $(u_d\circ \tau_\varphi)_{|\omega} = u_d$, for all $\varphi\in\Om(\Omega)$.
	\end{assumption}
	
	We now define the transformed deformation-to-state map $\mathcal{S}_\Omega : \Om(\Omega)\to H_0^1(\Omega)^2$ as $\mathcal{S}_\Omega(\varphi) = \mathcal{S}(\Omega(\varphi))\circ \tau_\varphi \in H_0^1(\Omega)^2$,  which  satisfies $E_\Omega[\varphi,\mathcal{S}_\Omega(\varphi)] = 0$ in $H^{-1}(\Omega)^2$. Indeed, we have
	\begin{align*}
		E_\Omega[\varphi,\mathcal{S}_\Omega(\varphi)] = E[\Omega(\varphi)] \mathcal{S}(\Omega(\varphi)) \circ \tau_\varphi = 0.
	\end{align*}

{Taking into consideration \eqref{operator:Eom} and the differentiability of $A(\varphi)$, $b(\varphi)$, $\widehat{f}(\varphi)$ and $\widehat{u}_g(\varphi)$ it can be shown that the operator $E_\Omega$ is continuously Fr{\'e}chet differentiable in both of its arguments.} 
	In fact, its derivative with respect to the first argument at $\varphi\in \Om(\Omega)$ in  direction $\delta\varphi \in \Om(\Omega)$ can be written as
	\begin{align*}
		\langle d_\varphi E_\Omega[\varphi,(\tilde{u},w)] \delta \varphi, (\psi_1,\psi_2) \rangle_{H_0^1(\Omega)^2} = \int_{\Omega} A'(\varphi)\delta\varphi \nabla\tilde{u}\cdot \nabla\psi_1 \du x + \int_{\Omega} A'(\varphi)\delta\varphi \nabla w\cdot \nabla\psi_2 \du x&\\
		- \int_\Omega b'(\varphi)\hf(\varphi) \psi_1 \du x + \int_{\Omega}A'(\varphi)\nabla \hu_{g_r}(\varphi)\cdot\nabla \psi_1 - \int_\omega b'(\varphi)( \tilde{u} + \hu_{g_r}(\varphi) - u_d)\psi_2 \du x&\\
		- \int_\Omega b(\varphi)\hf'(\varphi)\delta\varphi \psi_1 \du x + \int_{\Omega}A(\varphi)\nabla [\hu_{g_r}'(\varphi)\delta\varphi]\cdot\nabla \psi_1 - \int_\omega b(\varphi)\hu_{g_r}'(\varphi)\delta\varphi \psi_2 \du x ,&
	\end{align*}
	where $\hf'(\varphi)\delta\varphi = (D\tau_\varphi^{-1})^{\top}\nabla\hf(\varphi)\cdot\delta\varphi \in L^2(D)$ and $\hu_{g_r}'(\varphi)\delta\varphi = (D\tau_\varphi^{-1})^{\top}\nabla\hu_{g_r}(\varphi)\cdot\delta\varphi \in H_0^1(\Omega)$. Its derivative with respect to its second argument at $(\tilde{u},w)\in H_0^1(\Omega)^2$ in  direction $(\delta u,\delta w)\in H_0^1(\Omega)^2$ is given by
	\begin{align}\label{Eder2}
		\langle &d_{(\tilde{u},w)} E_\Omega[\varphi,(\tilde{u},w)] (\delta u, \delta w), (\psi_1,\psi_2) \rangle_{H_0^1(\Omega)^2}\\
		& = \int_{\Omega} A(\varphi)\nabla\delta u\cdot \nabla\psi_1 \du x + \int_{\Omega} A(\varphi)\nabla\delta w\cdot \nabla\psi_2 \du x - \int_\omega b(\varphi) \delta u \psi_2 \du x.
	\end{align}
	Above, we have $A'(\varphi)\delta\varphi = D\tau_\varphi^{-1} [ -D\delta\varphi D\tau_\varphi^{-1} - (D\tau_\varphi^{-1})^\top (D\delta\varphi)^{\top} + I \mathrm{tr}(D\tau_\varphi^{-1} D\delta\varphi )] (D\tau_\varphi^{-1})^{\top}\mathrm{det}(D\tau_\varphi)$ and $b'(\varphi)\delta\varphi = \mathrm{tr}(D\tau_\varphi^{-1} D\delta\varphi )\mathrm{det}(D\tau_\varphi)$, where we use Jacobi's formula.

	From the discussion above, we now obtain the differentiability of the deformation-to-state operator.
	\begin{proposition}
		Suppose that Assumptions \ref{assumption1} and \ref{assumption2} hold. Then the map $\mathcal{S}_\Omega : \Om(\Omega)^2\to H_0^1(\Omega)^2$ is differentiable. Its first derivative at $\varphi\in\Om(\Omega)$ in the direction $\delta\varphi\in\Om(\Omega)$ is given by $(z,v) = S_\Omega'(\varphi)\delta\varphi$ which solves the system
		\begin{align}\label{system:linear}
			\begin{aligned}
				-\dive(A(\varphi)\nabla(z + \hu_{g_r}'(\varphi)\delta\varphi)) & = b'(\varphi)\delta\varphi \hf + b(\varphi)\hf'(\varphi)\delta\varphi + \dive(A'(\varphi)\delta\varphi\nabla u) && \text{in }\Omega,\\
				-\dive(A(\varphi)\nabla v) - \chi_\omega b(\varphi)(z + \hu_{g_r}'(\varphi)\delta\varphi) & = b'(\varphi)\delta\varphi(u-u_d)\chi_\omega + \dive(A'(\varphi)\delta\varphi\nabla w)&&\text{in }\Omega,\\
				z = 0,\, v& = 0 &&\text{on }\partial\Omega,
			\end{aligned}
		\end{align}
		where $u = \tilde{u}+\hu_{g_r} \in H^2(\Omega)$, and $(\tilde{u},w) = S_\Omega(\varphi)$.
	\end{proposition}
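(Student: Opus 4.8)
The plan is to apply the implicit function theorem to the equation $E_\Omega[\varphi,(\tilde u,w)] = 0$ in the Banach spaces $\Om(\Omega) \subset C^{2,1}(\overline D;\mathbb{R}^d)$ and $H_0^1(\Omega)^2$. We have already recorded that $E_\Omega$ is continuously Fr\'echet differentiable in both arguments, and that $\mathcal{S}_\Omega(\varphi)$ is the (unique) solution of $E_\Omega[\varphi,\mathcal{S}_\Omega(\varphi)] = 0$. The only hypothesis of the implicit function theorem that remains to be verified is that the partial derivative $d_{(\tilde u,w)} E_\Omega[\varphi,(\tilde u,w)] : H_0^1(\Omega)^2 \to H^{-1}(\Omega)^2$ is a topological isomorphism. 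Once this is established, the implicit function theorem yields that $\mathcal{S}_\Omega$ is differentiable with $\mathcal{S}_\Omega'(\varphi)\delta\varphi = -\left(d_{(\tilde u,w)} E_\Omega[\varphi,\mathcal{S}_\Omega(\varphi)]\right)^{-1} d_\varphi E_\Omega[\varphi,\mathcal{S}_\Omega(\varphi)]\delta\varphi$, and the system \eqref{system:linear} is then obtained by unravelling the identity $d_{(\tilde u,w)} E_\Omega[\varphi,(\tilde u,w)](z,v) = -d_\varphi E_\Omega[\varphi,(\tilde u,w)]\delta\varphi$ using the explicit expressions above, moving the lower-order terms to the right-hand side, and rewriting the bilinear forms in divergence form.

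First I would check the isomorphism property. From \eqref{Eder2}, given $(h_1,h_2)\in H^{-1}(\Omega)^2$, the problem $d_{(\tilde u,w)} E_\Omega[\varphi,(\tilde u,w)](\delta u,\delta w) = (h_1,h_2)$ is the variational system: find $(\delta u,\delta w)\in H_0^1(\Omega)^2$ with $\int_\Omega A(\varphi)\nabla\delta u\cdot\nabla\psi_1 = \langle h_1,\psi_1\rangle$ for all $\psi_1$, and $\int_\Omega A(\varphi)\nabla\delta w\cdot\nabla\psi_2 = \langle h_2,\psi_2\rangle + \int_\omega b(\varphi)\delta u\,\psi_2$ for all $\psi_2$. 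Since $\|\varphi\|_{C^{2,1}} \le c < 1$, the matrix $A(\varphi)$ is uniformly elliptic and bounded (its entries are built from $D\tau_\varphi^{-1}$ and $\det D\tau_\varphi$, which are controlled by a Neumann-series argument), so the first equation is uniquely solvable for $\delta u$ by Lax--Milgram with an estimate $\|\delta u\|_{H_0^1} \le c\|h_1\|_{H^{-1}}$; then $b(\varphi)\delta u\chi_\omega \in L^2(\Omega)\subset H^{-1}(\Omega)$, so the second equation is uniquely solvable for $\delta w$, again by Lax--Milgram, with $\|\delta w\|_{H_0^1}\le c(\|h_1\|_{H^{-1}}+\|h_2\|_{H^{-1}})$. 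This proves bijectivity and boundedness of the inverse, hence the isomorphism property. Note that the one-way coupled structure is what makes this step clean: there is no feedback from $\delta w$ into the $\delta u$-equation.

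For the final identification of \eqref{system:linear}, I would substitute the known expression for $d_\varphi E_\Omega$, observing that the terms $\int_\Omega A(\varphi)\nabla[\hu_g'(\varphi)\delta\varphi]\cdot\nabla\psi_1$ and $\int_\omega b(\varphi)\hu_g'(\varphi)\delta\varphi\,\psi_2$ can be absorbed by writing the first unknown as $z + \hu_g'(\varphi)\delta\varphi$ (this is the reason the combination $z + \hu_g'(\varphi)\delta\varphi$, rather than $z$ alone, appears throughout \eqref{system:linear}); the terms $\int_\Omega A'(\varphi)\delta\varphi\,\nabla u\cdot\nabla\psi_1$ and $\int_\Omega A'(\varphi)\delta\varphi\,\nabla w\cdot\nabla\psi_2$ integrate by parts to $-\dive(A'(\varphi)\delta\varphi\nabla u)$ and $-\dive(A'(\varphi)\delta\varphi\nabla w)$ (valid since $u\in H^2$, $w\in H^2$ by \Cref{remark:regularity}, and $A'(\varphi)\delta\varphi$ is regular enough under \Cref{assumption2}); and the remaining zeroth- and first-order terms $b'(\varphi)\delta\varphi\,\hf$, $b(\varphi)\hf'(\varphi)\delta\varphi$, $b'(\varphi)\delta\varphi(u-u_d)\chi_\omega$ land on the right-hand side directly. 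The homogeneous Dirichlet conditions $z = v = 0$ on $\partial\Omega$ are inherited from $\mathcal{S}_\Omega'(\varphi)\delta\varphi \in H_0^1(\Omega)^2$.

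The main obstacle I anticipate is not the isomorphism argument itself, which is routine once uniform ellipticity of $A(\varphi)$ is in hand, but rather ensuring that all the coefficient maps $\varphi\mapsto A(\varphi)$, $\varphi\mapsto b(\varphi)$, $\varphi\mapsto\hf(\varphi)$, $\varphi\mapsto\hu_g(\varphi)$ are $C^1$ from $\Om(\Omega)$ (with the $C^{2,1}$, or weaker, topology) into the appropriate operator/function spaces, with values regular enough for the integration-by-parts steps and the $H^2$-regularity of $u,w$ to be legitimate --- this is exactly why \Cref{assumption2} (demanding $f\in H^1(D)$, $g_d\in H^{5/2}(\Sigma)$, hence $u_g\in H^3(D)$, and $(u_d\circ\tau_\varphi)|_\omega = u_d$) is imposed. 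I would treat the verification of these differentiability and regularity facts as standard (citing the function-space-parametrization results of \cite{necas2012} already invoked in the excerpt) and focus the written proof on the isomorphism step and the algebraic identification of the limiting system.
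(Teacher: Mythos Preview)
Your proposal is correct and follows essentially the same route as the paper: apply the implicit function theorem to $E_\Omega[\varphi,(\tilde u,w)]=0$, verify that $d_{(\tilde u,w)}E_\Omega$ is an isomorphism (the paper simply refers back to the argument of \Cref{prop:stateexistence}, which is exactly your Lax--Milgram treatment of the triangular system), and then identify $\mathcal{S}_\Omega'(\varphi)\delta\varphi$ via the chain rule and integration by parts. Your write-up is in fact more detailed than the paper's on both the isomorphism step and the regularity bookkeeping, but the underlying strategy is the same.
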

	\begin{proof}
		Let $\varphi^0\in \Om(\Omega)$ and $(u^0,w^0)\in H_0^1(\Omega)$ be such that $E_\Omega[\varphi^0,(u^0,w^0)] = 0$.
		Since the operators $A$ and $b$ are of class ${C}^{\infty}$, we see that $E_\Omega \in C^{\infty}(\Om(\Omega)\times H_0^1(\Omega)^2; H^{-1}(\Omega)^2)$.
		Following similar arguments as in \Cref{prop:stateexistence}, $d_{(u,w)}E_\Omega[\varphi^0,(u^0,w^0)] \in \mathcal{L}(H_0^1(\Omega)^2; H^{-1}(\Omega)^2)$ is an isomorphism.
		The implicit function theorem (see e.g. \cite{pata2019}) thus imply that there exist neighborhoods $\Om_{0} \subset \Om(\Omega)$ and $V_{0}\subset H_0^1(\Omega)^2$ centered at $\varphi^0$ and $(u^0,w^0)$, and an operator $\widetilde{\mathcal{S}}_0\in C^{\infty}(\Om_{0};V_0)$ such that $E_\Omega[\varphi,\widetilde{\mathcal{S}}_0(\varphi)] = 0$ for all $\varphi\in \Om_0$.
		From the definition of the deformation-to-state operator $\mathcal{S}_\Omega = \widetilde{\mathcal{S}}_0$ in $\Om_0$. From the arbitrary nature of $\varphi^0\in \Om(\Omega)$ and $(u^0,w^0)\in H_0^1(\Omega)^2$, $\mathcal{S}_\Omega$ is differentiable.
		
		Using chain rule, we thus get $\mathcal{S}_{\Omega}'(\varphi)\delta\varphi = -(d_{(\tilde{u},w)}E_\Omega[\varphi,\mathcal{S}_\Omega(\varphi)])^{-1} d_\varphi E_\Omega[\varphi,\mathcal{S}_\Omega(\varphi)]\delta\varphi$ which can be easily shown, by integration by parts, solves \eqref{system:linear}.
	\end{proof}

	To aid us in solving the derivative of the objective functional, we consider the following adjoint system:
	\begin{align}\label{adjoint:eulerian}
		\begin{aligned}
			&\langle d_{(\tilde{u},w)}E_\Omega[\varphi,\mathcal{S}_\Omega(\varphi)](\psi_1,\psi_2), (p,q) \rangle_{H_0^1(\Omega)^2} \\ & = (\chi_\omega b(\varphi)(\mathcal{S}_{\Omega,1}(\varphi) + \hu_{g_r}(\varphi) - u_d),\psi_1 )_\Omega - (A(\varphi)\nabla\psi_2,\nabla \overline{q}) + (b(\varphi)\overline{q}\chi_\omega,\psi_1),
		\end{aligned}
	\end{align}
	for all $(\psi_1,\psi_2)\in H_0^1(\Omega)$, where $(\mathcal{S}_{\Omega,1}(\varphi),\mathcal{S}_{\Omega,2}(\varphi)) = \mathcal{S}_{\Omega}(\varphi)$, $\overline{q}\in H^{1}(\Omega)$ is such that $\overline{q}_{|\Gamma} = 0$ and
	\begin{align}\label{qbar}
		\overline{q}_{|\Sigma} = - P_{[g_a,g_b]}\left(\frac{1}{\varepsilon}[\partial_\nu \mathcal{S}_{\Omega,2}(\circphi) - \partial_{\nu}\mathcal{S}_{\Omega,2}(\varphi)]\right)\in H^{\frac{1}{2}}(\Sigma).
	\end{align}
	Note that -- with similar arguments as in \Cref{ramark:continuityw} -- $\mathcal{S}_{\Omega,2}(\varphi), \partial_\nu \mathcal{S}_{\Omega,2}(\circphi)\in H_0^1(\Omega)\cap H^3(\Omega)$ and thus $ \partial_\nu \mathcal{S}_{\Omega,2}(\circphi) - \partial_\nu \mathcal{S}_{\Omega,2}(\varphi)\in H^{\frac{3}{2}}(\Sigma)$.

	Now, we write the objective functional $J_\varepsilon$ as follows:
	\begin{align*}
		J_\varepsilon(\varphi) = &\, \frac{1}{2}\int_\Omega \chi_\omega |\mathcal{S}_{\Omega,1}(\varphi) + \hu_{g_r}(\varphi)  - u_d |^2 b(\varphi) \du x - \frac{1}{2}\int_\Omega \chi_\omega |\mathcal{S}_{\Omega,1}(\circphi) + \hu_{g_r}(\circphi) - u_d |^2 b(\circphi) \du x\\ & + \F_\varepsilon^*( \partial_\nu \mathcal{S}_{\Omega,2}(\circphi) - \partial_\nu \mathcal{S}_{\Omega,2}(\varphi) ) .
	\end{align*}
	
	Before we proceed in the computation of the derivative of the functional $J_\varepsilon$, let us discuss the Fr{\'e}chet differentiability of the Fenchel transform $\F_\varepsilon^*$.
	\begin{lemma}
		For $y\in L^2(\Sigma)$, the Fenchel transformation  $\F_\varepsilon^*$ has a Fr{\'e}chet derivative at $y$ denoted as $d\F_\varepsilon^*(y) : L^2(\Sigma)\to \mathbb{R}$ whose action is given by
		\begin{align*}
			d\F_\varepsilon^*(y) \delta y=\left\langle P_{[q_a,q_b]}\left( \frac{1}{\varepsilon}y \right), \delta y \right\rangle_\Sigma \quad \forall \delta y\in L^2(\Sigma).
		\end{align*}
	\end{lemma}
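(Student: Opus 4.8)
The plan is to show first that the supremum defining $\F_\varepsilon^*(y)$ is attained at the unique point $\overline g(y) = P_{[g_a,g_b]}(\frac1\varepsilon y)$, and then to differentiate using Danskin's theorem (the envelope theorem for concave maximization). The functional $g\mapsto \langle y,g\rangle_\Sigma - \frac\varepsilon2\|g\|^2_{L^2(\Sigma)}$ is strictly concave and continuous on the closed convex set $\G$, which is bounded in $L^2(\Sigma)$; hence, by Remark~\ref{dirichletKKT}, the maximizer exists, is unique, and is characterized by the variational inequality $\langle y - \varepsilon\overline g(y), g - \overline g(y)\rangle_\Sigma \le 0$ for all $g\in\G$, which is exactly the projection $\overline g(y) = P_{[g_a,g_b]}(\frac1\varepsilon y)$. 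The projection onto the box $[g_a,g_b]$ acts pointwise and is Lipschitz on $L^2(\Sigma)$ with constant $1$, so $y\mapsto\overline g(y)$ is Lipschitz continuous.

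Next I would establish Gateaux differentiability with the claimed derivative and then upgrade to Frechet. For $t>0$ and $\delta y\in L^2(\Sigma)$, by optimality of $\overline g(y)$ for $y$ and of $\overline g(y+t\delta y)$ for $y+t\delta y$, one has the two-sided bound
\begin{align*}
\langle t\delta y,\overline g(y)\rangle_\Sigma \le \F_\varepsilon^*(y+t\delta y) - \F_\varepsilon^*(y) \le \langle t\delta y,\overline g(y+t\delta y)\rangle_\Sigma.
\end{align*}
Dividing by $t$ and letting $t\searrow 0$, the Lipschitz continuity of $y\mapsto\overline g(y)$ forces $\overline g(y+t\delta y)\to\overline g(y)$ in $L^2(\Sigma)$, so both sides converge to $\langle\delta y,\overline g(y)\rangle_\Sigma = \langle P_{[g_a,g_b]}(\frac1\varepsilon y),\delta y\rangle_\Sigma$. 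This yields the directional derivative, and since $\delta y\mapsto\langle P_{[g_a,g_b]}(\frac1\varepsilon y),\delta y\rangle_\Sigma$ is a bounded linear functional on $L^2(\Sigma)$, $\F_\varepsilon^*$ is Gateaux differentiable with the stated derivative. To obtain Frechet differentiability, I would write the remainder $R(\delta y) := \F_\varepsilon^*(y+\delta y) - \F_\varepsilon^*(y) - \langle P_{[g_a,g_b]}(\frac1\varepsilon y),\delta y\rangle_\Sigma$ and use the same two-sided optimality bounds (now with increment $\delta y$ rather than $t\delta y$) to get $0 \le R(\delta y) \le \langle\delta y,\overline g(y+\delta y)-\overline g(y)\rangle_\Sigma \le \|\delta y\|_{L^2(\Sigma)}\,\|\overline g(y+\delta y)-\overline g(y)\|_{L^2(\Sigma)} \le \frac1\varepsilon\|\delta y\|^2_{L^2(\Sigma)}$, where the last step uses the Lipschitz bound on the projection. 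Hence $R(\delta y) = O(\|\delta y\|^2)$, which is $o(\|\delta y\|)$, giving Frechet differentiability.

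The main (mild) obstacle is bookkeeping in the two-sided inequality: one must be careful that the lower estimate uses $\overline g(y)\in\G$ as a competitor for the problem at $y+t\delta y$ and the upper estimate uses $\overline g(y+t\delta y)\in\G$ as a competitor at $y$, together with the quadratic term cancelling or being dominated correctly — concretely $\F_\varepsilon^*(y+t\delta y) \ge \langle y+t\delta y,\overline g(y)\rangle - \frac\varepsilon2\|\overline g(y)\|^2 = \F_\varepsilon^*(y) + t\langle\delta y,\overline g(y)\rangle$ for the lower bound, and symmetrically for the upper bound. There is also a cosmetic point: the statement writes $P_{[q_a,q_b]}$, which should be $P_{[g_a,g_b]}$ to match the rest of the paper; I would use the latter. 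Everything else is routine and the argument is entirely self-contained given Remark~\ref{dirichletKKT}.
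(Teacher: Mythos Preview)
Your proposal is correct and follows essentially the same route as the paper: both arguments use the competitor inequalities coming from optimality of $\overline g(y)$ and $\overline g(y+\delta y)$ to sandwich the remainder, and then invoke the nonexpansiveness of the projection to obtain the bound $|R(\delta y)|\le \tfrac{1}{\varepsilon}\|\delta y\|_{L^2(\Sigma)}^2$. Your detour through G\^ateaux differentiability is unnecessary since your Fr\'echet remainder estimate already gives the result directly, and your lower bound $R(\delta y)\ge 0$ is in fact sharper than the paper's $R(\delta y)\ge -\tfrac{1}{\varepsilon}\|\delta y\|^2$ (it is simply the subgradient inequality for the convex function $\F_\varepsilon^*$); the typo $P_{[q_a,q_b]}$ versus $P_{[g_a,g_b]}$ is correctly noted.
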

	\begin{proof}
		Let us define $\overline{g} = P_{[g_a,g_b]}(\frac{1}{\varepsilon}y)$ and $\overline{g}_\delta = P_{[g_a,g_b]}(\frac{1}{\varepsilon}(y+\delta y))$ so that $\F_\varepsilon^*(y) = \langle y,\overline{g}\rangle_\Sigma - \frac{\varepsilon}{2}\|\overline{g}\|_{L^2(\Sigma)}^2$ and $\F_\varepsilon^*(y+\delta y) = \langle y+\delta y,\overline{g}_\delta\rangle_\Sigma - \frac{\varepsilon}{2}\|\overline{g}_\delta\|_{L^2(\Sigma)}^2$. This implies that
		\begin{align*}
			&\F_\varepsilon^*(y+\delta y) - \F_\varepsilon^*(y) - \left\langle P_{[g_a,g_b]}\left( \frac{1}{\varepsilon}y \right), \delta y \right\rangle_\Sigma\\ & = \langle y+\delta y,\overline{g}_\delta\rangle_\Sigma - \frac{\varepsilon}{2}\|\overline{g}_\delta\|_{L^2(\Sigma)}^2 - \left( \langle y,\overline{g}\rangle_\Sigma - \frac{\varepsilon}{2}\|\overline{g}\|_{L^2(\Sigma)}^2 \right) - \left\langle \overline{g}, \delta y \right\rangle_\Sigma\\
			& \le \langle y+\delta y,\overline{g}_\delta\rangle_\Sigma - \frac{\varepsilon}{2}\|\overline{g}_\delta\|_{L^2(\Sigma)}^2 - \left( \langle y,\overline{g}_\delta\rangle_\Sigma - \frac{\varepsilon}{2}\|\overline{g}_\delta\|_{L^2(\Sigma)}^2 \right) - \left\langle \overline{g}, \delta y \right\rangle_\Sigma\\
			& = \left\langle \delta y, \overline{g}_\delta - \overline{g} \right\rangle_\Sigma \le \|\delta y\|_{L^2(\Sigma)}\|\overline{g}_\delta - \overline{g}\|_{L^2(\Sigma)}.
		\end{align*}
		Since the projection operator is nonexpansive \cite[Lemma 1.10]{hinze2009}, we further get
		\begin{align*}
			\F_\varepsilon^*(y+\delta y) - \F_\varepsilon^*(y) - \left\langle P_{[g_a,g_b]}\left( \frac{1}{\varepsilon}y \right), \delta y \right\rangle_\Sigma \le \frac{1}{\varepsilon}\|\delta y\|_{L^2(\Sigma)}^2.
		\end{align*}
		Using similar arguments, we can get a lower bound for the term on the left-hand side in the form $-\frac{1}{\varepsilon}\|\delta y\|_{L^2(\Sigma)}^2$. This implies
		\begin{align*}
			\frac{1}{\|\delta y\|_{L^2(\Sigma)}}\left|\F_\varepsilon^*(y+\delta y) - \F_\varepsilon^*(y) - \left\langle P_{[g_a,g_b]}\left( \frac{1}{\varepsilon}y \right), \delta y \right\rangle_\Sigma \right| \to 0\text{ as }\|\delta y\|_{L^2(\Sigma)}\to 0.
		\end{align*}
		
	\end{proof}
	
	Let us now present the differentiability of the objective functional $J_\varepsilon$.
	\begin{theorem}\label{theorem:shapederom}
		Suppose that \Cref{assumption1} holds, and let $\varphi\in\Om(\Omega)$. The functional $J_\varepsilon$ is G{\^a}teaux differentiable at $\varphi$. The derivative in the direction $\delta\varphi\in\Om(\Omega)$ is denoted and expressed as
		\begin{align}\label{shapederivative}
			\begin{aligned}
				d_\varphi J_\varepsilon(\varphi)\delta\varphi =&\, \int_{\Omega(\varphi)} \left[ \frac{1}{2} \chi_\omega |\mathcal{S}_{1}(\Omega(\varphi)) + u_{g_r} - u_d |^2 + f \widehat{p} + \chi_\omega(\mathcal{S}_{1}(\Omega(\varphi)) + u_{g_r} - u_d)\widehat{q}  \right]\mathrm{div}(\delta\varphi\circ \tau_\varphi^{-1}) \du x\\
				& + \int_{\Omega(\varphi)} \widehat{A}\nabla \mathcal{S}_{1}(\Omega(\varphi))\cdot \nabla\widehat{p} \du x + \int_{\Omega(\varphi)} \widehat{A} \nabla\mathcal{S}_{2}(\Omega(\varphi))\cdot \nabla\widehat{q} \du x + \int_{\Omega(\varphi)}  \nabla f\cdot(\delta\varphi\circ \tau_\varphi^{-1})\widehat{p} \du x,
			\end{aligned}
		\end{align}
		where $(\widehat{p},\widehat{q}) \in H^1_0(\Omega(\varphi))\times H^{1}(\Omega(\varphi))$ is the solution for the system
		\begin{align}\label{adjoint:lagrangian}
			\begin{aligned}
				-\Delta\widehat{p} - \chi_\omega \widehat{q} & = \chi_\omega(\mathcal{S}_1(\Omega(\varphi)) + u_{g_r} - u_d) &&\text{in }\Omega(\varphi),\\
				-\Delta\widehat{q} & = 0 &&\text{in }\Omega(\varphi),\\
				\widehat{p} = \widehat{q} & = 0  &&\text{on }\Gamma(\varphi),\\
				\widehat{p} = 0,\, \widehat{q} & = - P_{[g_a,g_b]}\left(\frac{1}{\varepsilon}[ \partial_\nu \mathcal{S}_{\Omega,2}(\circphi) - \partial_\nu \mathcal{S}_2(\Omega(\varphi))]\right)&&\text{on }\Sigma,
			\end{aligned}
		\end{align}
		$(\mathcal{S}_1(\Omega(\varphi)), \mathcal{S}_2(\Omega(\varphi))) = \mathcal{S}(\Omega(\varphi)) \in H_0^1(\Omega)^2\cap H^2(\Omega)^2$, and $\widehat{A} = [ D(\delta\varphi\circ\tau_\varphi^{-1}) + (D(\delta\varphi\circ\tau_\varphi^{-1}))^{\top} - I \mathrm{div}(\delta\varphi\circ\tau_\varphi^{-1})] $.
	\end{theorem}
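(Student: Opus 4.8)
The plan is a Lagrangian/adjoint computation on the fixed reference domain $\Omega$, using the function space parametrization introduced above, followed by transporting the result back to $\Omega(\varphi)$ via $\tau_\varphi$.

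\emph{Chain rule.} The coefficient maps $\varphi\mapsto A(\varphi),b(\varphi),\hf(\varphi),\hu_g(\varphi)$ are $C^\infty$ — for $\hf,\hu_g$ one uses the extra regularity of $f,g_d$ in \Cref{assumption2}, which also gives $\hu_g'(\varphi)\delta\varphi\in H_0^1(\Omega)$ — while $\mathcal{S}_\Omega$ is differentiable by the preceding proposition, with derivative $(z,v)=\mathcal{S}_\Omega'(\varphi)\delta\varphi$ solving \eqref{system:linear}. Because $\tau_\varphi=I$ in a neighbourhood of $\Sigma$, the trace and conormal derivative on $\Sigma$ coincide there with the usual ones, so $\tfrac{d}{d\varphi}\partial_\nu\mathcal{S}_{\Omega,2}(\varphi)\delta\varphi=\partial_\nu v$, which is well defined by elliptic regularity since \eqref{system:linear} is harmonic near $\Sigma$ (the closure of $\omega$ lies inside $\Omega$); and $\F_\varepsilon^*$ is Fréchet differentiable by the preceding lemma. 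Writing $J_\varepsilon$ as in the expression displayed above, discarding the $\circphi$–term and using $(u_d\circ\tau_\varphi)_{|\omega}=u_d$ from \Cref{assumption2}, the chain rule expresses $d_\varphi J_\varepsilon(\varphi)\delta\varphi$ as the sum of the explicit term obtained by differentiating $b(\varphi)$ in the first volume integral, the pairing $\int_\omega b(\varphi)\big(\mathcal{S}_{\Omega,1}(\varphi)+\hu_g(\varphi)-u_d\big)\big(z+\hu_g'(\varphi)\delta\varphi\big)\,\du x$, and the boundary term $\langle\overline q_{|\Sigma},\partial_\nu v\rangle_\Sigma$ produced by $d\F_\varepsilon^*$, where $\overline q_{|\Sigma}$ is the datum in \eqref{qbar}.

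\emph{Adjoint elimination.} Introduce $(p,q)\in H_0^1(\Omega)^2$ solving \eqref{adjoint:eulerian} and a lifting $\overline q\in H^{2-\delta}(\Omega)$ of the boundary datum \eqref{qbar}; this regularity is consistent because $\partial_\nu\mathcal{S}_{\Omega,2}(\circphi)-\partial_\nu\mathcal{S}_{\Omega,2}(\varphi)\in H^{3/2}(\Sigma)$ (remark preceding \eqref{qbar}) and the non-expansive projection $P_{[g_a,g_b]}$ costs at most half a derivative. Test the weak form of \eqref{system:linear} against $(p,q)$ and the adjoint identity \eqref{adjoint:eulerian} against $\big(z+\hu_g'(\varphi)\delta\varphi,v\big)$, then subtract: using \eqref{Eder2}, the symmetry of $A(\varphi)$, and Green's formula on a neighbourhood of $\Sigma$ (legitimate there because $A(\varphi)=I$ and $z+\hu_g'(\varphi)\delta\varphi,v\in H^2$), all terms containing $(z,v)$ cancel, the $-(A(\varphi)\nabla\psi_2,\nabla\overline q)$ term of \eqref{adjoint:eulerian} being precisely what cancels the Fenchel boundary term of the previous step. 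What survives is $d_\varphi J_\varepsilon(\varphi)\delta\varphi=(\text{explicit }b'\text{-terms})-\langle d_\varphi E_\Omega[\varphi,\mathcal{S}_\Omega(\varphi)]\delta\varphi,(p,q+\overline q)\rangle$, now free of $(z,v)$; the $\hu_g'(\varphi)\delta\varphi$ pieces regroup so that only $u=\tilde u+\hu_g$, $w$ and the adjoints $p$, $q+\overline q$ enter.

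\emph{Transport and conclusion.} Undo the parametrization through $\bar x=\tau_\varphi(x)$: $b(\varphi)\,\du x\leftrightarrow\du\bar x$, $(D\tau_\varphi^{-1})^{\top}\nabla\leftrightarrow\nabla_{\bar x}$, and $\hf'(\varphi)\delta\varphi$, $\hu_g'(\varphi)\delta\varphi$ become expressions of the form $\nabla f\cdot(\delta\varphi\circ\tau_\varphi^{-1})$. Pushing $A'(\varphi)\delta\varphi$ and $b'(\varphi)\delta\varphi$ forward and invoking Jacobi's formula produces exactly the matrix $\dot A$ and the scalar $\mathrm{div}(\delta\varphi\circ\tau_\varphi^{-1})$ in front of the zeroth-order terms. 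Renaming $\widehat p:=p\circ\tau_\varphi^{-1}$ and $\widehat q:=(q+\overline q)\circ\tau_\varphi^{-1}$ turns \eqref{adjoint:eulerian}--\eqref{qbar} into the strong system \eqref{adjoint:lagrangian} and the surviving expression into \eqref{shapederivative}; the claimed $H^2(\Omega(\varphi))\times H^{2-\delta}(\Omega(\varphi))$ regularity of $(\widehat p,\widehat q)$ follows from classical elliptic regularity together with the $H^{3/2-\delta}(\Sigma)$ regularity of the datum in \eqref{qbar}.

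\emph{Main obstacle.} The delicate point is the Fenchel boundary term: one must show $\partial_\nu v\in L^2(\Sigma)$ is well defined and may be integrated by parts against the lifting $\overline q$ on $\Sigma$, which is what dictates the regularity bookkeeping above ($C^{2,1}$ domains, $f\in H^1$, $g_d\in H^{5/2}$, so that $\mathcal{S}_{\Omega,2}$ and $v$ are smooth near $\Sigma$), and one must keep careful track of signs so that the boundary contributions coming from $d\F_\varepsilon^*$ and from the adjoint equation cancel rather than reinforce. The remaining chain-rule accounting and the change of variables in the last step are lengthy but entirely routine.
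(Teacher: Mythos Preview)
Your proposal follows essentially the same route as the paper: chain rule on the reference domain via the differentiability of $\mathcal{S}_\Omega$ and of $\F_\varepsilon^*$, introduction of the adjoint pair $(p,q)$ solving \eqref{adjoint:eulerian} together with the lifting $\overline q$ from \eqref{qbar} to absorb the Fenchel boundary term, elimination of $(z,v)$ by testing \eqref{system:linear} and \eqref{adjoint:eulerian} against each other, and finally the change of variables $x\mapsto\tau_\varphi(x)$ that turns $A'(\varphi)\delta\varphi$, $b'(\varphi)\delta\varphi$ into $\dot A$, $\mathrm{div}(\delta\varphi\circ\tau_\varphi^{-1})$ and $(p,q+\overline q)$ into $(\widehat p,\widehat q)$.

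One small inaccuracy: you assert that $\tau_\varphi=I$ \emph{in a neighbourhood} of $\Sigma$, but the definition of $\Om(\Omega)$ only gives $\varphi=D\varphi=0$ \emph{on} $\Sigma$. This weaker fact is what the paper actually uses, and it is enough: it forces $A(\varphi)=I$ on $\Sigma$, so the conormal derivative $A(\varphi)\nabla v\cdot\nu$ equals $\partial_\nu v$ there, and the Green identity converting $\langle\overline q|_\Sigma,\partial_\nu v\rangle_\Sigma$ into volume integrals goes through. Likewise $v$ is not literally harmonic near $\Sigma$ but satisfies a uniformly elliptic equation with $C^{1,1}$ coefficients and smooth right-hand side there (since $\overline\omega\subset\Omega(\varphi)$), which is all that is needed for $\partial_\nu v\in L^2(\Sigma)$. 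With this correction your argument matches the paper's.
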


	\begin{proof}
		The differentiability of the objective functional follows from the differentiability of the deformation-to-state operator and the Fenchel tranform. By the chain rule we thus have
		\begin{align*}
			\begin{aligned}
				d_\varphi J_\varepsilon(\varphi)\delta\varphi =&\, \frac{1}{2}\int_\Omega \chi_\omega |\tilde{u} + \hu_{g_r}(\varphi) - u_d |^2 b'(\varphi)\delta\varphi \du x + \int_\Omega \chi_\omega b(\varphi) (\tilde{u} + \hu_{g_r}(\varphi) - u_d )(z + \hu_{g_r}'(\varphi)\delta\varphi)\du x\\  &+  \int_\Sigma P_{[g_a,g_b]}\left(\frac{1}{\varepsilon}[ \partial_\nu \mathcal{S}_{\Omega,2}(\circphi) - \partial_{\nu}w]\right)\partial_\nu v \du s\\
				=&\, \frac{1}{2}\int_\Omega \chi_\omega |\tilde{u} + \hu_{g_r}(\varphi) - u_d |^2 b'(\varphi)\delta\varphi \du x + \int_\Omega \chi_\omega b(\varphi) (\tilde{u} + \hu_{g_r}(\varphi) - u_d )(z + \hu_{g_r}'(\varphi)\delta\varphi)\du x\\
				& -\int_\Omega A(\varphi)\nabla v \cdot \nabla \overline{q}\du x - \int_\Omega \dive(A(\varphi)\nabla v)\overline{q} \du x,
			\end{aligned}
		\end{align*}
		where $(\tilde{u},w) = \mathcal{S}(\varphi)$, $(z,v) = \mathcal{S}'(\varphi)\delta\varphi$. We note that to achieve the second equality, we used the fact that $D\varphi_{|\Sigma} = 0$, which implies that $A(\varphi) = I$ on $\Sigma$, and thus from \eqref{qbar} we infer that
		\begin{align*}
			\int_\Sigma P_{[g_a,g_b]}\left(\frac{1}{\varepsilon}[ \partial_\nu \mathcal{S}_{\Omega,2}(\circphi) - \partial_{\nu}w]\right)\partial_\nu v \du s = -  \int_\Sigma \overline{q} (A(\varphi)\nabla v)\cdot\nu \\
			= -\int_\Omega A(\varphi)\nabla v \cdot \nabla \overline{q}\du x - \int_\Omega \dive(A(\varphi)\nabla v)\overline{q} \du x.
		\end{align*}
		From the adjoint system \eqref{adjoint:eulerian}, we further get
		\begin{align}\label{der}
			\begin{aligned}
				d_\varphi J_\varepsilon(\varphi)\delta\varphi = &\, \frac{1}{2}\int_\Omega \chi_\omega |\tilde{u} + \hu_{g_r}(\varphi) - u_d |^2 b'(\varphi)\delta\varphi \du x + \langle d_{(\tilde{u},w)}E_\Omega[\varphi,\mathcal{S}_\Omega(\varphi)](z + \hu_{g_r}'(\varphi)\delta\varphi,v), (p,q) \rangle_{H_0^1(\Omega)^2}\\
				&-\int_\Omega \dive(A(\varphi)\nabla v)\overline{q} \du x - \int_\Omega b(\varphi)(z + \hu_{g_r}'(\varphi)\delta\varphi)\overline{q}\chi_\omega\du x.
			\end{aligned}
		\end{align}
		From \eqref{Eder2}, integration by parts, and since $(p,q)\in H_0^1(\Omega)^2$ we see that
		\begin{align*}
			\langle & d_{(\tilde{u},w)}E_\Omega[\varphi,\mathcal{S}_\Omega(\varphi)](z + \hu_{g_r}'(\varphi)\delta\varphi,v), (p,q) \rangle_{H_0^1(\Omega)^2}\\ & = - \int_\Omega \dive(A(\varphi)\nabla( z + \hu_{g_r}'(\varphi)\delta\varphi))p \du x  - \int_\Omega \dive(A(\varphi)\nabla v){q} \du x
			- \int_\Omega b(\varphi)( z + \hu_{g_r}'(\varphi)\delta\varphi) {q}\chi_\omega\du x.
		\end{align*}
		Plugging this into equation \eqref{der} and using \eqref{system:linear} will give us
		\begin{align*}
			d_\varphi J_\varepsilon(\varphi)&\delta\varphi =  \frac{1}{2}\int_\Omega \chi_\omega |\tilde{u} + \hu_{g_r}(\varphi) - u_d |^2 b'(\varphi)\delta\varphi \du x - \int_\Omega \dive(A(\varphi)\nabla( z + \hu_{g_r}'(\varphi)\delta\varphi))p \du x   \\
			&- \int_\Omega \dive(A(\varphi)\nabla v)({q}+\overline{q}) \du x - \int_\Omega b(\varphi)( z + \hu_{g_r}'(\varphi)\delta\varphi) ({q}+\overline{q})\chi_\omega\du x\\
			=&\, \frac{1}{2}\int_\Omega \chi_\omega |\tilde{u} + \hu_{g_r}(\varphi) - u_d |^2 b'(\varphi)\delta\varphi \du x + \int_\Omega b'(\varphi)\delta\varphi \hf(\varphi) p \du x + \int_\Omega b(\varphi)\hf'(\varphi)\delta\varphi  p \du x\\
			& + \int_\Omega \dive(A'(\varphi)\delta\varphi \nabla(\tilde{u} + \hu_{g_r}(\varphi)))p \du x + \int_\Omega \dive(A'(\varphi)\delta\varphi \nabla v)(q+\overline{q}) \du x\\
			&+ \int_\Omega b'(\varphi)\delta\varphi (\tilde{u} + \hu_{g_r}(\varphi) - u_d)(q+\overline{q})\chi_\omega \du x.
		\end{align*}
		Another integration by parts on the terms on the second line of the right-hand side above, together with the fact $p = q + \overline{q} = 0$ on $\Gamma$ and $D\delta \varphi =\mathbf{0}$ on $\Sigma$, lead to
		\begin{align}\label{prepder}
			\begin{aligned}
				d_\varphi J_\varepsilon(\varphi)&\delta\varphi = \frac{1}{2}\int_\Omega \chi_\omega |\tilde{u} + \hu_{g_r}(\varphi) - u_d |^2 b'(\varphi)\delta\varphi \du x + \int_\Omega b'(\varphi)\delta\varphi \hf(\varphi) p \du x + \int_\Omega b(\varphi)\hf'(\varphi)\delta\varphi  p \du x\\
				& - \int_\Omega A'(\varphi)\delta\varphi \nabla(\tilde{u} + \hu_{g_r}(\varphi))\cdot\nabla p \du x - \int_\Omega A'(\varphi)\delta\varphi \nabla w\cdot \nabla(q+\overline{q}) \du x\\
				&+ \int_\Omega b'(\varphi)\delta\varphi (\tilde{u} + \hu_{g_r}(\varphi) - u_d)(q+\overline{q})\chi_\omega \du x.
			\end{aligned}
		\end{align}
		
		Lastly, for any $u_1,u_2\in H^1(\Omega)$, we use the following identities:
		\begin{align*}
			&\int_\Omega b'(\varphi)\delta\varphi\, u_1 \du x = \int_{\Omega(\varphi)} (u_1\circ\tau_\varphi^{-1} )\dive(\delta\varphi\circ\tau_{\varphi}^{-1}) \du x ; \quad \text{and}\\
			&\int_\Omega A'(\Omega)\delta\varphi \nabla u_1 \cdot \nabla u_2 \du x = - \int_{\Omega(\varphi)} \widehat{A} \nabla (u_1\circ\tau_\varphi^{-1}) \cdot \nabla (u_2\circ\tau_\varphi^{-1}) \du x,
		\end{align*}
		to facilitate the change of the integrals from $\Omega$ to $\Omega(\varphi)$ in \eqref{prepder} which gives us \eqref{shapederivative}, with $(\widehat{p},\widehat{q})=(p,q+\overline{q})\circ \tau_\varphi^{-1}$. {By a change of variables we transform the integral from $\Omega$ to $\Omega(\varphi)$ in the adjoint system \eqref{adjoint:eulerian} and deduce that $(\hat p,\hat q)$ solves\eqref{adjoint:lagrangian}.}
	\end{proof}

	\noindent\textbf{Zol{\'e}sio-Hadamard structure.} We write the derivative of the objective function as a boundary integral on the variable boundary $\Gamma(\varphi)$. In this way, we can get the expression for the shape gradient, which is often referred to as the Zol{\'e}sio-Hadamard structure \cite[page 479]{delfour2011}.
	
	\begin{corollary}\label{corzolhad}
		Suppose that the assumptions in \Cref{theorem:shapederom} hold. Then the derivative of $J_\varepsilon$ at $\varphi\in\Om(\Omega)$ in the direction $\delta\varphi\in\Om(\Omega)$ can be expressed as
		\begin{align}\label{zolhad}
			d_\varphi J_\varepsilon(\varphi)\delta\varphi = \int_{\Gamma(\varphi)} \left[ \frac{1}{2}\chi_\omega |u(\varphi) - u_d |^2 + \nabla u(\varphi) \cdot\nabla\widehat{p} + \nabla w(\varphi) \cdot\nabla\widehat{q}\, \right](\delta\varphi\circ \tau_\varphi^{-1}\cdot\nu)\du s.
		\end{align}
		where $(u(\varphi),w(\varphi)) = (\mathcal{S}_1(\Omega(\varphi)) + u_{g_r}, \mathcal{S}_2(\Omega(\varphi)))$.
	\end{corollary}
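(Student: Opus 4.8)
The goal is to convert the volume-integral expression \eqref{shapederivative} for $d_\varphi J_\varepsilon(\varphi)\delta\varphi$ into the boundary integral \eqref{zolhad} over $\Gamma(\varphi)$. The natural strategy is to recognize that the integrand in \eqref{shapederivative} is (up to lower-order terms that will cancel) of divergence form, so that the divergence theorem pushes everything to the boundary $\partial\Omega(\varphi) = \Gamma(\varphi)\cup\Sigma$, and then to argue that the contribution on $\Sigma$ vanishes because of the boundary conditions $\delta\varphi = D\delta\varphi = 0$ on $\Sigma$ built into $\Om(\Omega)$. First I would write $\theta := \delta\varphi\circ\tau_\varphi^{-1}$, which by the definition of $\Om(\Omega)$ satisfies $\theta = 0$ and $D\theta = 0$ on $\Sigma$; this immediately kills any boundary term on $\Sigma$, so only $\Gamma(\varphi)$ survives. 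For brevity write $u=u(\varphi)=\mathcal S_1(\Omega(\varphi))+u_g$, $w=w(\varphi)=\mathcal S_2(\Omega(\varphi))$, $\widehat p,\widehat q$ as in \eqref{adjoint:lagrangian}, and $E(x):=\tfrac12\chi_\omega|u-u_d|^2 + f\widehat p + \chi_\omega(u-u_d)\widehat q$ for the scalar coefficient of $\operatorname{div}\theta$ in the first line of \eqref{shapederivative}.

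Next I would identify the standard shape-calculus identity: for sufficiently smooth scalars $a,b$ on $\Omega(\varphi)$,
\begin{align*}
	\dot A \nabla a\cdot\nabla b + (\nabla a\cdot\nabla b)\operatorname{div}\theta
	= \operatorname{div}\big( (\nabla a\cdot\nabla b)\,\theta \big) - (\nabla a\cdot\nabla b)\operatorname{div}\theta + \cdots,
\end{align*}
but more usefully, the combination actually appearing in \eqref{shapederivative} — namely $E\operatorname{div}\theta + \dot A\nabla u\cdot\nabla\widehat p + \dot A\nabla w\cdot\nabla\widehat q + (\nabla f\cdot\theta)\widehat p$ — is exactly the material-derivative-free form of the shape derivative of the underlying Lagrangian, and standard manipulations (e.g. \cite[Chapter~on shape calculus]{necas2012}, or Delfour--Zol\'esio type identities) rewrite it as $\operatorname{div}$ of a vector field plus terms that vanish by the state equation $-\Delta u = f$, the auxiliary equation $-\Delta\widehat q = 0$, and the adjoint equation $-\Delta\widehat p - \chi_\omega\widehat q = \chi_\omega(u-u_d)$. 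Concretely, I would integrate by parts in the two $\dot A$-terms, expand $\dot A = D\theta + (D\theta)^\top - I\operatorname{div}\theta$, collect the $\operatorname{div}\theta$ pieces against $E$, and use the PDEs to eliminate the interior (non-divergence) remainder; the boundary terms produced land on $\Gamma(\varphi)\cup\Sigma$, and the $\Sigma$-part drops since $\theta$ and $D\theta$ vanish there.

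On $\Gamma(\varphi)$ we then exploit the homogeneous Dirichlet conditions $u=0$, $w=0$, $\widehat p=0$, $\widehat q=0$ on $\Gamma(\varphi)$: a function vanishing on $\Gamma(\varphi)$ has gradient purely normal there, i.e. $\nabla u = (\partial_\nu u)\nu$ on $\Gamma(\varphi)$ and likewise for $w,\widehat p,\widehat q$; moreover the tangential derivative of $\tfrac12|u-u_d|^2$ simplifies and all terms not carrying a factor $\theta\cdot\nu$ either vanish or assemble into $\big[\tfrac12\chi_\omega|u-u_d|^2 + \nabla u\cdot\nabla\widehat p + \nabla w\cdot\nabla\widehat q\big](\theta\cdot\nu)$. (Note $f\widehat p$ and $(u-u_d)\widehat q$ vanish on $\Gamma(\varphi)$ since $\widehat p=\widehat q=0$ there, which is why they disappear from \eqref{zolhad} while surviving in \eqref{shapederivative}.) Collecting gives \eqref{zolhad}.

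\textbf{Main obstacle.} The delicate point is regularity: to perform the integrations by parts and to take normal traces of $\nabla u\cdot\nabla\widehat p$ and $\nabla w\cdot\nabla\widehat q$ on $\Gamma(\varphi)$, one needs $u,w\in H^2(\Omega(\varphi))$ (from \Cref{remark:regularity} and \Cref{assumption2}) and $\widehat p\in H^2(\Omega(\varphi))$, $\widehat q\in H^{2-\delta}(\Omega(\varphi))$ (from \eqref{adjoint:lagrangian} and the remark following \Cref{theorem:shapederom}); since $2-\delta > 3/2$, the products $\nabla u\cdot\nabla\widehat p$ and $\nabla w\cdot\nabla\widehat q$ do have well-defined $L^1(\Gamma(\varphi))$ traces, but one must be careful that each individual integration-by-step is justified at this level of regularity on a $C^{2,1}$ domain — this is the step I expect to require the most care, and it is precisely where the strengthened \Cref{assumption2} and the $C^{2,1}$-smoothness of $\Omega(\varphi)$ are used.
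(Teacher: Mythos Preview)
Your plan is correct and follows essentially the same route as the paper's proof. The paper sets $\theta=\delta\varphi\circ\tau_\varphi^{-1}$, invokes the standard shape-calculus identity (cited from Delfour--Zol\'esio)
\[
\int_{\Omega(\varphi)}\dot A\nabla a\cdot\nabla b\,\du x=\int_{\Gamma(\varphi)}(\nabla a\cdot\nabla b)(\theta\cdot\nu)\,\du s-\int_{\Omega(\varphi)}\Delta a\,(\theta\cdot\nabla b)+\Delta b\,(\theta\cdot\nabla a)\,\du x
\]
for $a,b$ vanishing on $\Gamma(\varphi)$, substitutes the state and adjoint Laplacians via \eqref{eq6} and \eqref{adjoint:lagrangian}, and then integrates by parts the three terms $\chi_\omega|u-u_d|^2\operatorname{div}\theta$, $f\widehat p\operatorname{div}\theta$, and $\chi_\omega(u-u_d)\widehat q\operatorname{div}\theta$ to cancel all interior contributions; your plan to expand $\dot A=D\theta+(D\theta)^\top-I\operatorname{div}\theta$ and integrate by parts directly is exactly how one proves that identity, so the difference is only packaging.
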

	
	\begin{proof}
		We note that for any $u,v\in H^1(\Omega(\varphi))$ with $u = v = 0$ on $\Gamma(\varphi)$
		\begin{align*}
			\int_{\Omega(\varphi)} \widehat{A}\nabla u\cdot \nabla v \du x = \int_{\Gamma(\varphi)}  (\nabla u\cdot\nabla v)[\theta  \cdot \nu ]\du s-\int_{\Omega(\varphi)} \Delta u[ \theta \cdot \nabla v ] + \Delta v[ \theta \cdot \nabla u ] \du x,
		\end{align*}
		where we used the notation $\theta := \delta\varphi\circ\tau_\varphi^{-1}$, and $\Delta u,\Delta v \in L^2(\mathbb{R}^d)$ in the sense of distributions (see e.g. \cite[pages 487--488]{delfour2011}). From the identity above and using the notation $(u(\varphi),w(\varphi)) = (\mathcal{S}_1(\Omega(\varphi)) + u_{g_r}, \mathcal{S}_2(\Omega(\varphi)))$ we get
		\begin{align*}
			d_\varphi J_\varepsilon(\varphi)\delta\varphi & = \int_{\Omega(\varphi)} \left[ \frac{1}{2} \chi_\omega |u(\varphi) - u_d |^2 + f \widehat{p} + \chi_\omega(u(\varphi) - u_d)\widehat{q}  \right]\mathrm{div}\theta \du x + \int_{\Omega(\varphi)}  (\nabla f\cdot\theta)\widehat{p} \du x\\
			&- \int_{\Omega(\varphi)} \Delta u(\varphi) [ \theta \cdot \nabla \widehat{p} ] + \Delta \widehat{p}[ \theta \cdot \nabla u(\varphi)  ] \du x + \int_{\Gamma(\varphi)} (\nabla u(\varphi) \cdot\nabla\widehat{p})[\theta  \cdot \nu ]\\
			&- \int_{\Omega(\varphi)} \Delta w(\varphi)[ \theta \cdot \nabla \widehat{q} ] + \Delta \widehat{q}[ \theta \cdot \nabla w(\varphi) ] \du x + \int_{\Gamma(\varphi)} (\nabla w(\varphi)\cdot\nabla\widehat{q})[\theta  \cdot \nu ]\\
			& = \int_{\Omega(\varphi)} \left[ \frac{1}{2} \chi_\omega |u(\varphi) - u_d |^2 + f \widehat{p} + \chi_\omega(u(\varphi)  - u_d)\widehat{q}  \right]\mathrm{div}\theta \du x + \int_{\Omega(\varphi)}  (\nabla f\cdot\theta )\widehat{p} \du x\\
			& + \int_{\Omega(\varphi)} f[ \theta \cdot \nabla \widehat{p} ] + (\chi_\omega \widehat{q} + \chi_\omega(u(\varphi) - u_d))[ \theta \cdot \nabla u(\varphi)  ] \du x + \int_{\Gamma(\varphi)} (\nabla u(\varphi) \cdot\nabla\widehat{p})[\theta  \cdot \nu ]\\
			&+ \int_{\Omega(\varphi)} \chi_\omega(u(\varphi)  - u_d)[ \theta \cdot \nabla \widehat{q} ]  \du x + \int_{\Gamma(\varphi)} (\nabla w(\varphi)\cdot\nabla\widehat{q})[\theta  \cdot \nu ].
		\end{align*}
		To simplify even further, we have by integration by parts:
		\begin{align*}
			&\bullet\ \int_{\Omega(\varphi)}  \chi_\omega |u(\varphi) - u_d |^2 \mathrm{div}\theta \du x = \int_{\partial\Omega(\varphi)} \chi_\omega |u(\varphi) - u_d |^2 (\theta\cdot\nu) - 2\int_{\Omega(\varphi)} \chi_\omega (u(\varphi) - u_d)(\theta\cdot\nabla u(\varphi)),\\
			&\bullet\ \int_{\Omega(\varphi)}  f\widehat{p}\, \mathrm{div}\theta \du x = -\int_{\Omega(\varphi)}  (\nabla f\cdot\theta )\widehat{p} \du x - \int_{\Omega(\varphi)} f(\theta\cdot\nabla \widehat{p}\,),\\
			&\bullet\ \int_{\Omega(\varphi)}  \chi_\omega(u(\varphi)  - u_d)\widehat{q}\, \mathrm{div}\theta \du x = - \int_{\Omega(\varphi)} \chi_\omega(u(\varphi)  - u_d)(\theta\cdot\nabla \widehat{q}\,) - \int_{\Omega(\varphi)} \chi_\omega\widehat{q}(\theta\cdot\nabla u(\varphi)\,).
		\end{align*}
		Therefore, we get \eqref{zolhad}
	\end{proof}
	
	We end this subsection with the following classical first-order necessary condition.
	\begin{proposition}[First Order Necessary Condition]
		Let $\varphi^*\in \Om(\Omega)$ be a local minimizer of $J_\varepsilon$. Then $d_\varphi J_\varepsilon(\varphi^*)( \varphi - \varphi^*) \ge   0$ for any $\varphi \in  \Om(\Omega)$.
	\end{proposition}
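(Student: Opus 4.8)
The plan is to run the classical variational-inequality argument for a constrained local minimum, exploiting that $\Om(\Omega)$ contains the segment emanating from $\varphi^*$ towards any of its points (at least for small parameter values) together with the G\^ateaux differentiability of $J_\varepsilon$ supplied by \Cref{theorem:shapederom}. So I would fix an arbitrary $\varphi\in\Om(\Omega)$, set $\delta\varphi := \varphi-\varphi^*$, and for $t\in(0,1)$ consider the path $\varphi_t := \varphi^* + t\,\delta\varphi = (1-t)\varphi^* + t\varphi$.

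The first step is to verify that $\varphi_t\in\Om(\Omega)$ for all sufficiently small $t>0$. The norm bound follows from convexity of the $W^{3,\infty}$-ball, $\|\varphi_t\|_{W^{3,\infty}}\le (1-t)\|\varphi^*\|_{W^{3,\infty}}+t\|\varphi\|_{W^{3,\infty}}\le c$, while the homogeneous conditions $\varphi_t = D\varphi_t = 0$ on $\Sigma$ hold because they are linear and satisfied by both $\varphi^*$ and $\varphi$. For the inclusion $\overline{[I+\varphi_t](\Omega)}\subset D$ I would argue by compactness: since $I+\varphi^*$ is a $C^{2,1}$-diffeomorphism onto its image, $\overline{[I+\varphi^*](\Omega)}=(I+\varphi^*)(\overline\Omega)$ is a compact subset of the open set $D$ and hence has positive distance $d_0>0$ to $\Sigma=\partial D$; because $\|\varphi_t-\varphi^*\|_{L^\infty(D)} = t\|\delta\varphi\|_{L^\infty(D)}\le 2ct$, every point $(I+\varphi_t)(x)$, $x\in\overline\Omega$, lies within distance $2ct$ of the point $(I+\varphi^*)(x)\in\overline{[I+\varphi^*](\Omega)}$, hence belongs to $D$ as soon as $2ct<d_0$. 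Thus there is $t_0>0$ with $\varphi_t\in\Om(\Omega)$ for all $t\in(0,t_0)$.

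Since $\varphi^*$ is a local minimizer of $J_\varepsilon$ and $\varphi_t\to\varphi^*$ in $C^2(\overline D;\mathbb{R}^d)$ as $t\searrow 0$, there is $t_1\in(0,t_0)$ such that $J_\varepsilon(\varphi_t)\ge J_\varepsilon(\varphi^*)$ for all $t\in(0,t_1)$, whence the difference quotient $t^{-1}\bigl(J_\varepsilon(\varphi^*+t\,\delta\varphi)-J_\varepsilon(\varphi^*)\bigr)$ is nonnegative for every such $t$. By \Cref{theorem:shapederom}, $J_\varepsilon$ is G\^ateaux differentiable at $\varphi^*$; moreover the expression \eqref{shapederivative} for $d_\varphi J_\varepsilon(\varphi^*)$ and the linearized state and adjoint systems \eqref{system:linear}, \eqref{adjoint:lagrangian} feeding into it depend linearly on the direction, so the directional derivative at $\varphi^*$ also exists in the direction $\delta\varphi=\varphi-\varphi^*$ (a difference of two admissible fields, hence an element of $C^{1,1}(\overline D;\mathbb{R}^d)$), and passing to the limit $t\searrow 0$ in the difference quotient gives $d_\varphi J_\varepsilon(\varphi^*)(\varphi-\varphi^*)\ge 0$. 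As $\varphi\in\Om(\Omega)$ was arbitrary, this is the assertion.

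The two points that require a little care — and the only places where the argument is not entirely routine — are the admissibility of the perturbed fields $\varphi_t$, in particular the preservation of the geometric constraint $\overline{[I+\varphi_t](\Omega)}\subset D$, which I would handle through the positive-distance/compactness estimate above rather than by assuming $D$ itself is convex; and the evaluation of the G\^ateaux derivative from \Cref{theorem:shapederom}, proved for directions in $\Om(\Omega)$, at the direction $\varphi-\varphi^*$, which need not lie in $\Om(\Omega)$ — this is dispatched by the linearity of the shape-derivative formula and of the systems \eqref{system:linear}, \eqref{adjoint:lagrangian} in $\delta\varphi$. Everything else is the textbook first-order necessary condition.
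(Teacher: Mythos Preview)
Your argument is correct and is precisely the classical variational-inequality reasoning the paper has in mind; the paper does not supply a proof of this proposition, simply labelling it ``classical'' and leaving it unproved. Your care with the two nonroutine points---the admissibility $\varphi_t\in\Om(\Omega)$ for small $t$ via the positive-distance argument, and the extension of the directional derivative from directions in $\Om(\Omega)$ to the difference $\varphi-\varphi^*$ by linearity of \eqref{shapederivative} in $\delta\varphi$---is appropriate and fills in exactly what a reader would need.
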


	\section{Numerical resolution} \label{section:numerics}

	In this section, we discuss the numerical implementation of the low-regret problem \eqref{low-regret}. We are especially interested  in the effect of missing information of the Dirichlet data on  the nature of the solutions, or at least on the optimal  values of the cost-functional.  We also numerically investigate the effect of the parameter $\varepsilon>0$ in the  low-regret formulation on the numerical solutions.
	
	Let us recall from \Cref{corzolhad} that the derivative can be written in the form $$d_\varphi J_\varepsilon(\varphi)\delta\varphi = \displaystyle\int_{\Gamma(\varphi)}\nabla J_\varepsilon(\varphi)\nu \cdot (\delta\varphi\circ \tau_\varphi^{-1}) \du s$$
	 with $\nabla J_\varepsilon(\varphi) =  \frac{1}{2}\chi_\omega |u(\varphi) - u_d |^2 + \nabla u(\varphi) \cdot\nabla\widehat{p} + \nabla w(\varphi) \cdot\nabla\widehat{q}$. 
	 
	 The intuitive choice for a descent direction would then be $\delta\varphi = -(\nabla J_\varepsilon(\varphi)\nu)\circ\tau_\varphi$ on $\Gamma(\varphi)$ so that $$d_\varphi J_\varepsilon(\varphi)\delta\varphi = - \|\nabla J_\varepsilon(\varphi)\nu\|_{L^2(\Gamma(\varphi))^d}^2 \le 0.$$
	We employ the so-called traction method {\cite{azegami1994,shimoda1995,shimoda1997}} that smoothly extends $-\nabla J_\varepsilon(\varphi)\nu$ over $\Omega(\varphi)$. For this purpose, we compute the solution $\bG(\varphi):\Omega(\varphi)\to \mathbb{R}^d$ of the system
	\begin{align}\label{traction}
		\left\{
		\begin{aligned}
			\alpha\Delta\bG &= 0 &&\text{in }\Omega(\varphi),\\
			\alpha\partial_\nu\bG + \bG &= -\nabla J_\varepsilon(\varphi)\nu &&\text{on }\Gamma(\varphi),\\
			\bG & = 0 &&\text{on }\Sigma,
		\end{aligned}
		\right.
	\end{align}
	where $\alpha>0$ is sufficiently small. We can then choose $\delta\varphi = \bG\circ\tau_\varphi$, so that $\delta\varphi_{|\Gamma(\varphi)} \approx -(\nabla J_\varepsilon(\varphi)\nu)\circ\tau_\varphi $. {We note that if $f\in H^1(D)$ and $g_r\in H^{\frac{5}{2}}(\Sigma)$ then $\bG\in W^{3,\infty}(D;\mathbb{R}^d)$ and consequently $\delta\varphi = \bG\circ\tau_\varphi\in W^{3,\infty}(D;\mathbb{R}^d)$, as required by $\mathcal{O}(\Omega)$. In the numerical realisation we shall use P1-elements so that this regularity will not be supported. We expect that this does not have a qualitatively strong effect on the solution.}
	
	An advantage of having the shape parametrized by the deformation field is that it allows us to use numerical methods that have been successfully utilized for optimal control problems.  We look for minimizing deformation field iteratively, beginning with $\varphi^0 = 0$, and generate the minimizing sequence $(\varphi^k)$ by $\varphi^{k+1} = \varphi^{k} + t^{k} \delta\varphi^{k}$, where $\delta\varphi^{k} = \bG(\varphi^{k})\circ\tau_{\varphi^{k}}$. The step size $t^{k-1}$ is chosen by a modified Barzilai--Borwein method \cite{barzilai1988}, i.e.,
	\begin{align}\label{BarBorstep}
		t^{k}  = \sigma^k\times \left\{
		\begin{aligned}
			\frac{\int_{\Gamma} (\varphi^{k-1} - \varphi^{k-2})\cdot (\delta\varphi^{k-1} - \delta\varphi^{k-2})}{\|\delta\varphi^{k-1} - \delta\varphi^{k-2} \|_{L^2(\Gamma)}^2} &&\text{if }k\text{ is odd},\\
			\frac{\|\varphi^{k-1} - \varphi^{k-2} \|_{L^2(\Gamma)}^2}{\int_{\Gamma} (\varphi^{k-1} - \varphi^{k-2})\cdot (\delta\varphi^{k-1} - \delta\varphi^{k-2})} &&\text{if }k\text{ is even},\\
		\end{aligned}
		\right.
	\end{align}
	where $\sigma\in (0,1)$ is a fixed scaling parameter.
	The power in the scaling parameter is added so as to make sure that the deformed domain $\Omega(\varphi^{k})$ does note have a boundary $\Gamma(\varphi^{k})$ that crosses itself.
	Since the computation of $t^k$ requires two previous steps, we  need to also define $\varphi^{1}$ before entering the iterative computation of the solution.

	Summarizing, we have the following algorithm:
	\begin{algorithm}
		\caption{Gradient method with a weighted Barzilai--Borwein step size}\label{alg:cap}
		\begin{algorithmic}
			\State\textbf{Initialize:} $0<\texttt{tol}<<1$, $\texttt{test} = 1$, $0< \sigma < 1$
			\State \qquad\qquad $\varphi^{0} = 0$, $\delta\varphi^0 = \bG(\varphi^0)$, $\varphi^1 \gets \varphi^0 + 0.5\times \delta\varphi^0$, $\delta\varphi^1 \gets \bG(\varphi^1)\circ\tau_{\varphi^1}$ and set $k = 2$
			\While{$\texttt{test} \ge \texttt{tol}$}
			\State identify $t^k$ by \eqref{BarBorstep}
			\State $\varphi^{k+1} \gets \varphi^{k} + t^{k} \delta\varphi^{k}$ and compute $\delta\varphi^{k+1} \gets \bG(\varphi^{k+1})\circ\tau_{\varphi^{k+1}}$
			\State $\texttt{test}\gets \max\{|J_\varepsilon(\varphi^{k+1}) - J_\varepsilon(\varphi^{k})|, \|\delta\varphi^{k+1}\|_{L^2(\Gamma)}\}$
			\State $k \gets k+1$
			
			\EndWhile
		\end{algorithmic}
	\end{algorithm}

	\subsection{Initial set-up} To find the minimizing deformation field, we are tasked to solve three equations (the state equations \eqref{system:couple}, adjoint equations \eqref{adjoint:lagrangian}, and the deformation field equations \eqref{traction}). We use the finite element method --- through the open-source software \texttt{FreeFem++} \cite{hecht2012} --- to approximate the solutions of these equations using $\mathbb{P}^1$ finite elements.

	The initial domain $\Omega\subset D$ is defined as the annular region that is bounded by $\Sigma = \{ (x_1,x_2)\in\mathbb{R}^2 : x_1^2 + x_2^2 = 4 \}$ and $\Gamma = \{ (x_1,x_2)\in\mathbb{R}^2 : x_1^2 + x_2^2 = (3/4)^2 \} $. The subdomain $\omega \subset \Omega$ is defined as $\omega = \{ (x_1,x_2)\in \mathbb{R}^2 : 1 \le x_1^2 + x_2^2 \le (7/4)^2 \}$.

	Throughout the implementation, we shall use two profiles for the function $u_d :\omega\to \mathbb{R}$. Namely, $u_d$ is the restriction to $\omega$ of the solution of the problem
	\begin{align}\label{targetdef}
		\left\{
		\begin{aligned}
			-\Delta v & = f \text{ in }\Omega_d,\\
			v & = 0  \text{ on }\Gamma_d,\\
			v & = g_r \text{ on }\Sigma,
		\end{aligned}
		\right.
	\end{align}
	where $f\in L^2(D)$ is the source function in \eqref{system:couple}, $g_r\in H^{\frac{1}{2}}(\Sigma)$ is a given Dirichlet-data and $\Omega_d$ is an annular domain whose exterior boundary is the same $\Sigma$ defined above, and its interior boundary $\Gamma_d$ is one of the following cases:
	\begin{align*}
		&\Gamma_d = \left\{(x_1,x_2)\in \mathbb{R}^2 : (x_1 - 0.1)^2 + x_2^2 = \frac{1}{16}\right\},\\
		&\Gamma_d = \left\{(x_1,x_2)\in \mathbb{R}^2 :  x_1 =0.4( \cos(t) + 0.4\cos(2t) ),\, x_2 =  0.3\sin(t) , t\in [0,2\pi]  \right\},
	\end{align*}
	which we respectively will refer to as $\Gamma_d$ (circle) and $\Gamma_d$ (arrow head). Throughout the remaining part of this manuscript, we use the functions $f = 1 $ and $g_r  = 0.1\cos(2\pi x_1)\sin(2\pi x_2)$. We refer to the target profile derived from $\Gamma_d$ (circle) and $\Gamma_d$ (arrow head) as $u_d$ (circle) and $u_d$ (arrow head), respectively.
	
	\begin{figure}[h]
		\centering
		\includegraphics[width=0.45\textwidth]{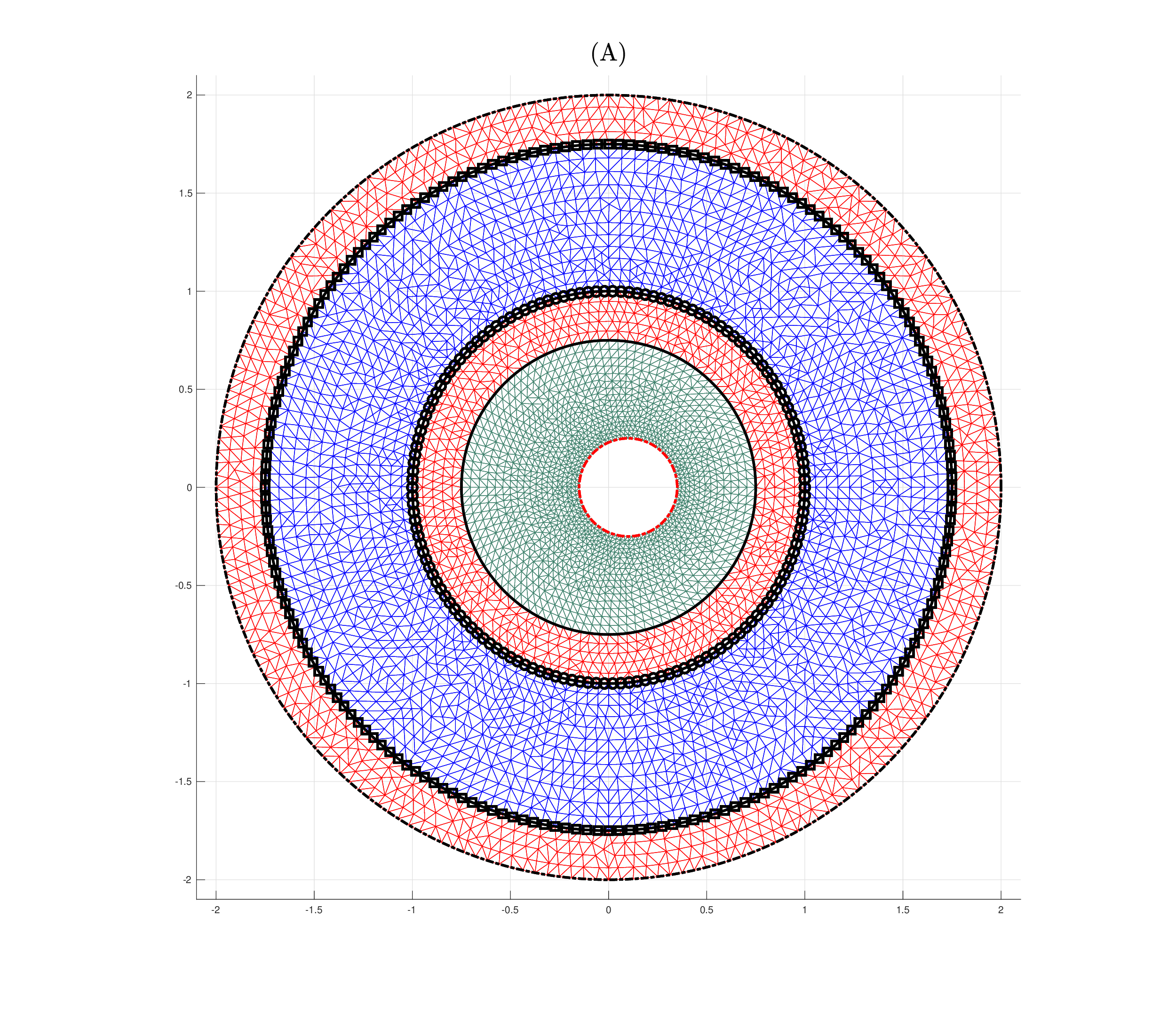}\hspace{-.4in} \includegraphics[width=0.56\textwidth]{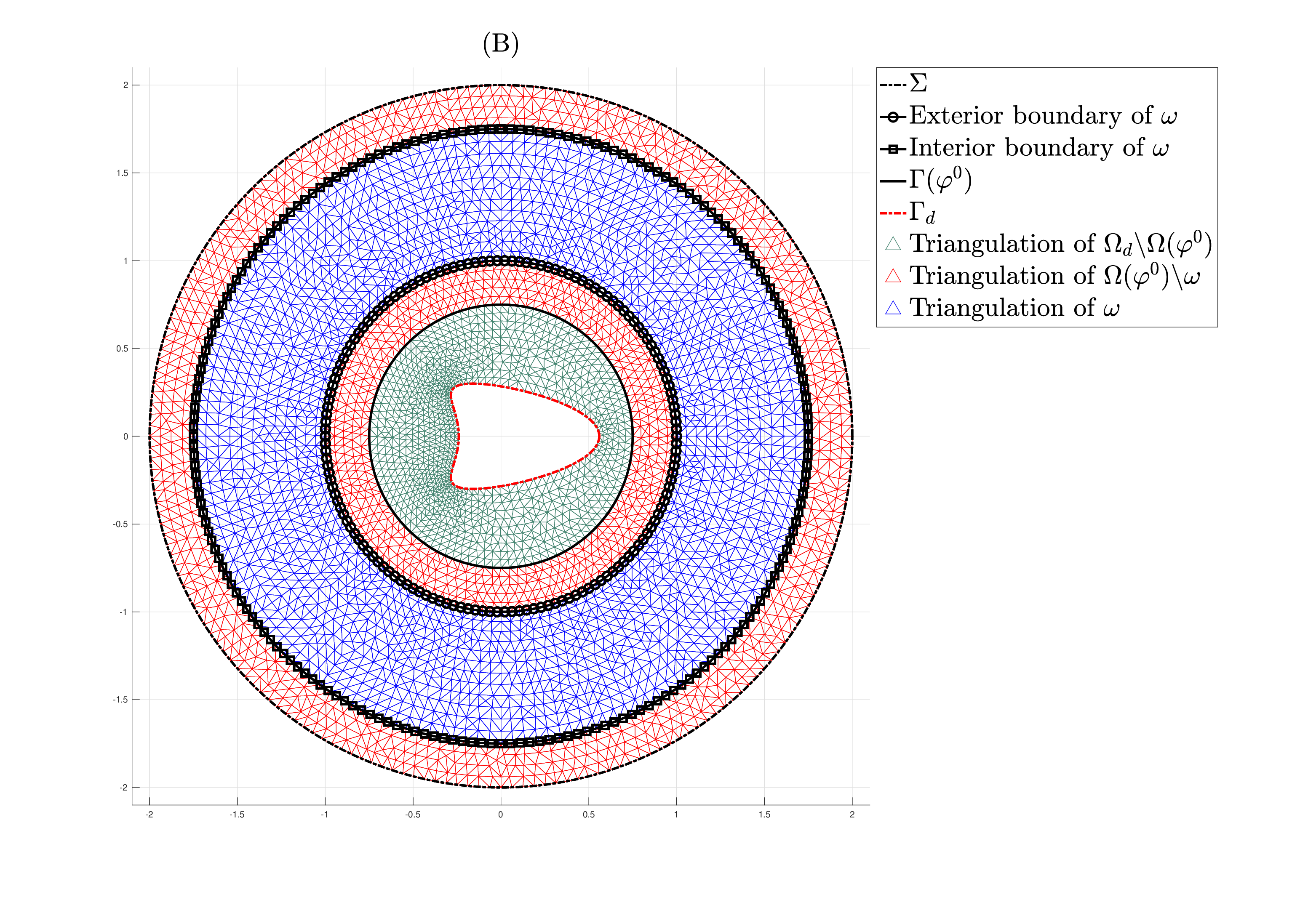}
		\caption{Initial set-up of the boundaries $\Sigma$, $\Gamma(\varphi^0)$ and $\Gamma_d$ and the triangulation of the domains $\omega$, $\Omega(\varphi^0)$ and $\Omega_d$ with $\Gamma_d$ (circle) (A) and $\Gamma_d$ (arrow head) (B).}
		\label{figure:domssetup}
	\end{figure}
	
	The discretization of $\Gamma_d$ is done by dividing the interval $[0,2\pi]$ by $n = 80$ points. The boundary $\Gamma(\varphi^0)$ is then discretized by dividing the interval $[0,2\pi]$ with $n = 100$ points and use its parametrization. Similarly, the interior and exterior boundaries of $\omega$, and $\Sigma$ are discretized using their polar coordinate parametrization and dividing the interval $[0,2\pi]$ respectively with $n = 120$, $n = 140$ and $n = 160$ points. Each region bounded by two consecutive boundaries is then triangulated via Delaunay. See \Cref{figure:domssetup} for visualization, where the initial domain $\Omega(\varphi^0)$ is the region with the red and blue triangulations, while $\Omega_d$ is the domain consisting of the the regions with red, blue and green triangulations. To visualize the behavior of the deformation fields, we shall only show the plots of the deformation of the boundary $\Gamma$, i.e. if $\varphi$ is a deformation that is of interest we shall show the boundary $\Gamma(\varphi)$.
	
	For the box constraints on the missing data $g_\delta$, we use $g_a = -0.2$ and $g_b = 0.2$.
	
	\subsection{The nominal deformation} In this section, we discuss the construction of the nominal deformation field $\circphi$. Since the solutions of the low-regret optimization problem that we are looking for are supposed to correspond to missing Dirichlet data around the given Dirichlet profile $g_r$, we deemed it reasonable to define $\circphi$ in such a way that it corresponds to $g_r$. To be specific, $\circphi$ solves the optimization problem
	\begin{align}
		&\min_{\varphi\in\Om(\Omega)} \widetilde J(\varphi) = \frac{1}{2}\int_{\omega}
		\left| u - u_d \right|^2 \du x\label{objfuncnom} \\
		\text{subject to}&\nonumber\\
		&\left\{
		\begin{aligned}
			-\Delta u & = f  &&\text{in } \Omega(\varphi),\\
			u & = 0&& \text{on }\Gamma(\varphi),\\
			u & = g_r  && \text{on }\Sigma.
		\end{aligned}
		\right.\label{nominalpoisson}
	\end{align}
	The optimization problem \eqref{objfuncnom}--\eqref{nominalpoisson} is well-known and numerical techniques have been well studied, see for example \cite{dapogny2018,delfour2011,henrot2010,murai2013}. In our case, we use the the gradient method discussed above.
	
	This minimization problem also serves as a test for the performance of the Barzilai--Borwein method:  since the state equation \eqref{nominalpoisson} has the same Dirichlet profile as the Poisson equation from which we derive our target profile $u_d$, the gradient method should be able to give us a boundary $\Gamma(\circphi)$ that is close to $\Gamma_d$. We observe from \Cref{figure:nominal} that this  is indeed the case. In \Cref{figure:nominal}, blocks (A) and (C) show the boundaries $\Gamma_d$, $\Gamma(\circphi)$, $\Gamma(\varphi^0)$, and the interior boundary of $\omega$. Additionally, we plotted the points of discretization of the boundaries $\Gamma(\varphi^k)$
	that converge to $\Gamma(\circphi)$. Meanwhile, blocks (B) and (D) depict the close-up comparison between $\Gamma_d$ and $\Gamma(\circphi)$. We see in \Cref{figure:nominal} (B) that the boundary $\Gamma(\circphi)$ is close to $\Gamma_d$, where the difference can be attributed to the limited access to the data
	and/or due to machine error. On the more nuanced $\Gamma_d$ (arrow head), we observe that our method is not able to identify the left side of the arrow which exhibits non-convexity of the region inside $\Gamma_d$. This phenomena is not exclusive to our method. Indeed, there are works dedicated to identifying shapes that have non-convexity, see for example \cite{rabago2024}.
	\begin{figure}[h]
		\centering
		\includegraphics[width=0.9\textwidth]{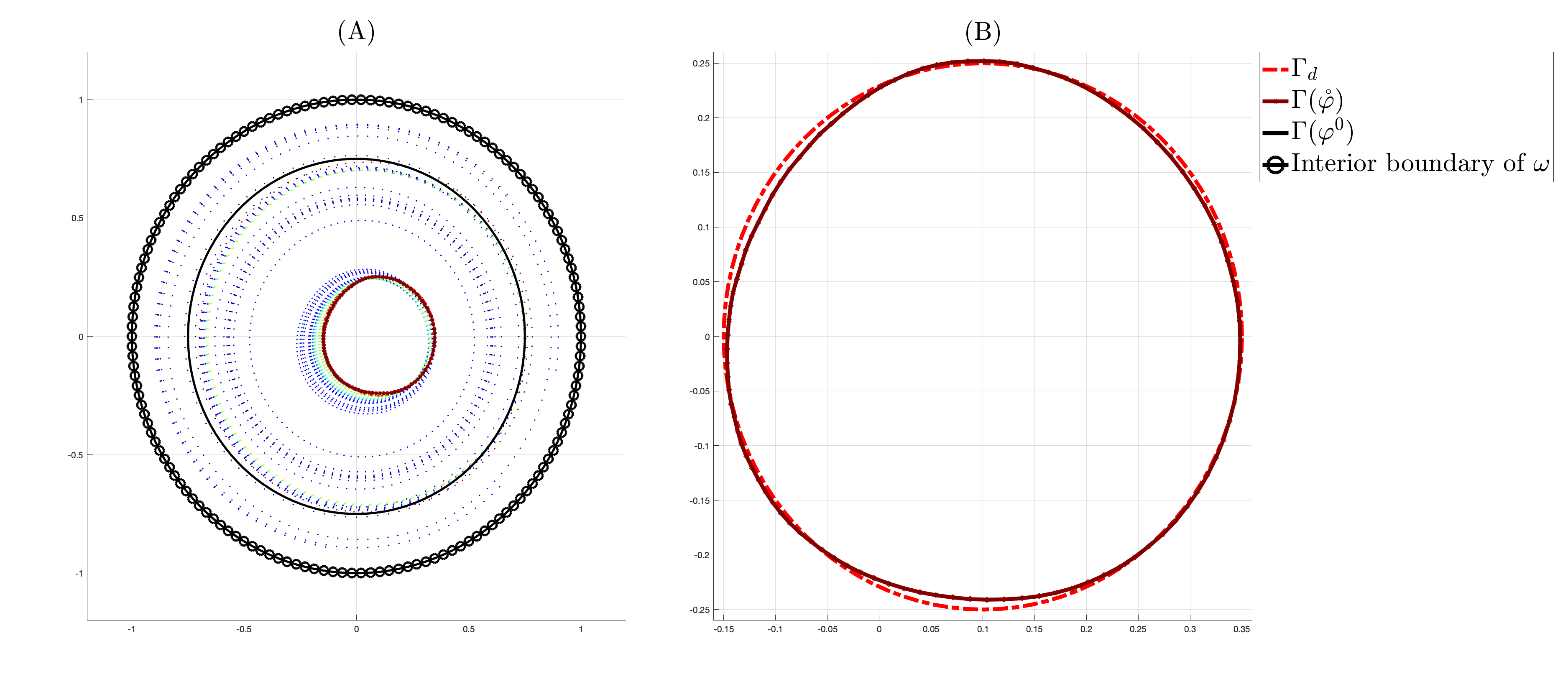}\\ \hspace{-.5in}\includegraphics[width=0.8\textwidth]{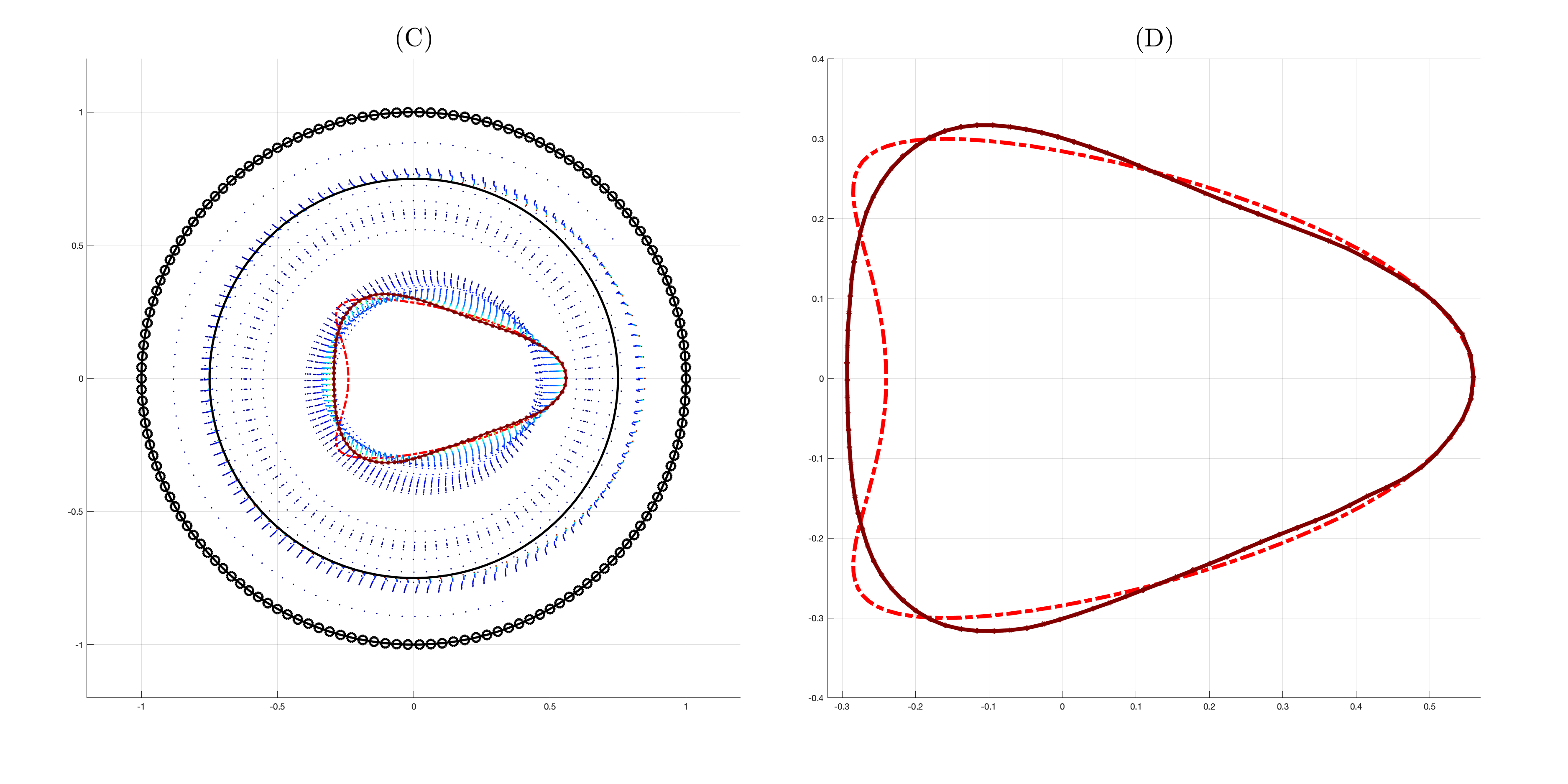}
		\caption{The evolution of the initial boundary $\Gamma(\varphi^0)$ towards $\Gamma(\circphi)$ with target profile $u_d$ (circle) (A) and $u_d$ (arrow head) (C), and the zoomed-in comparison of the boundaries $\Gamma_d$ and $\Gamma(\circphi)$ with $u_d$ (circle) (B) and $u_d$ (arrow head) (D).}
		\label{figure:nominal}
	\end{figure}

	\subsection{Low-regret solutions} 
	We now solve the low-regret problem at specific values of $\varepsilon>0$. 
	Beginning with $\varepsilon = 0.5$, \Cref{figure:eps05} (A1) and \Cref{figure:eps05} (B1) show the evolution of the initial boundary $\Gamma(\varphi^0)$ towards the solution $\Gamma(\varphi_\varepsilon)$ with $u_d$ (circle) and $u_d$ (arrow head) as target profiles, respectively. We see that the boundary $\Gamma(\varphi_\varepsilon)$ for both target profiles is considerably farther from the boundary $\Gamma_d$ than the boundary $\Gamma(\circphi)$. This is due to the fact that the low-regret problem takes into account the missing data $g_\delta$ which is compensated by the Fenchel transform.
	
	\Cref{figure:eps05} (A2) and \Cref{figure:eps05} (B2) show the progression of the objective function values at each iteration. 
	As expected from the Barzilai--Borwein method, the iterations of $J_\varepsilon$ tend to a neighborhood of zero in a non-monotone fashion.
	We observe that the value of the tracking part of the cost become small as $k\to\infty$, which behaves similarly as $J_\varepsilon$. This relates to the fact that $\|\overline{g}(\varphi_\varepsilon)\|_{L^2(\Sigma)} = 5.035\times 10^{-8}$ and $\|\overline{g}(\varphi_\varepsilon)\|_{L^2(\Sigma)} = 3.419\times 10^{-7}$ for the cases where the target profiles are $u_d$ (circle) and $u_d$ (arrow head), respectively. As a consequence, this gives small value for the evaluation of the Fenchel transform.

	\begin{figure}[h]
		\centering
		\includegraphics[width=.7\textwidth]{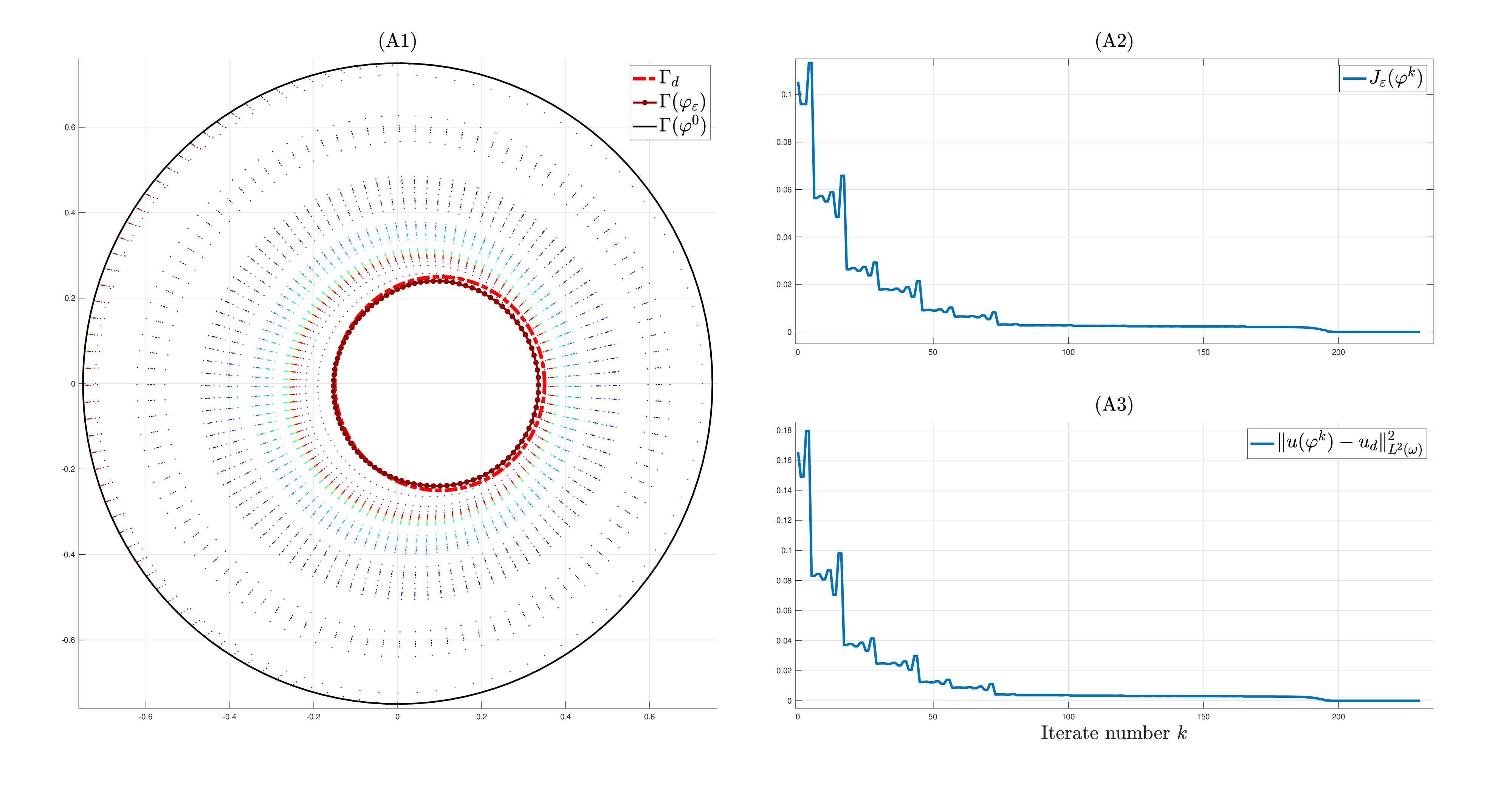}\\ \includegraphics[width=.7\textwidth]{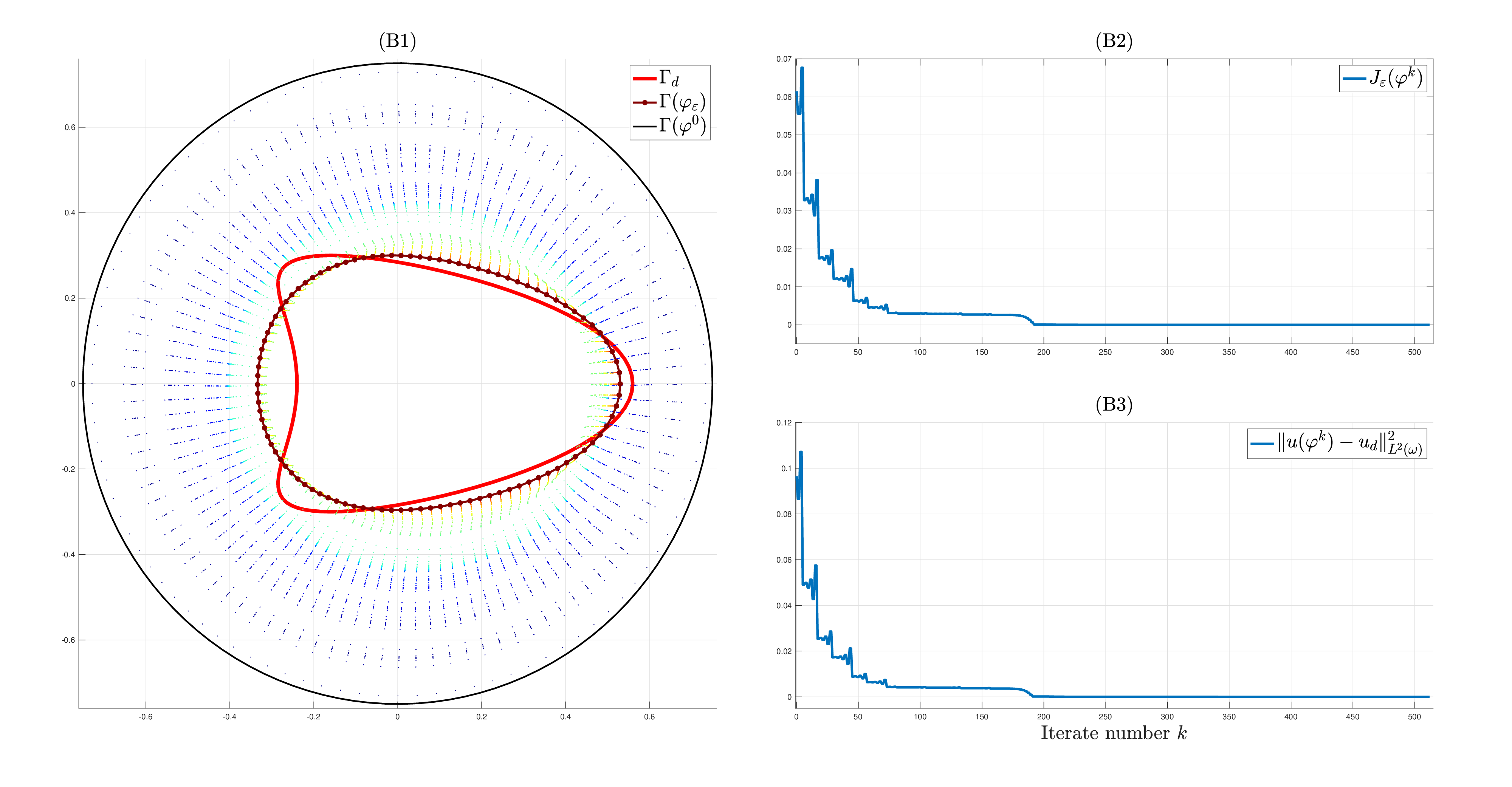}
		\caption{The evolution of the initial boundary $\Gamma(\varphi^0)$ towards $\Gamma(\varphi_\varepsilon)$ with target profile $u_d$ (circle) (A1) and $u_d$ (arrow head) (B1); the trend of the objective function value with respect to the iteratiosn with $u_d$ (circle) (A2) and $u_d$ (arrow head) (B2); and the trend of the tracking functional with respect to the iteratiosn with $u_d$ (circle) (A3) and $u_d$ (arrow head) (B3).}
		\label{figure:eps05}
	\end{figure}
	
	\begin{figure}[h]
		\centering
		\includegraphics[width=1\textwidth]{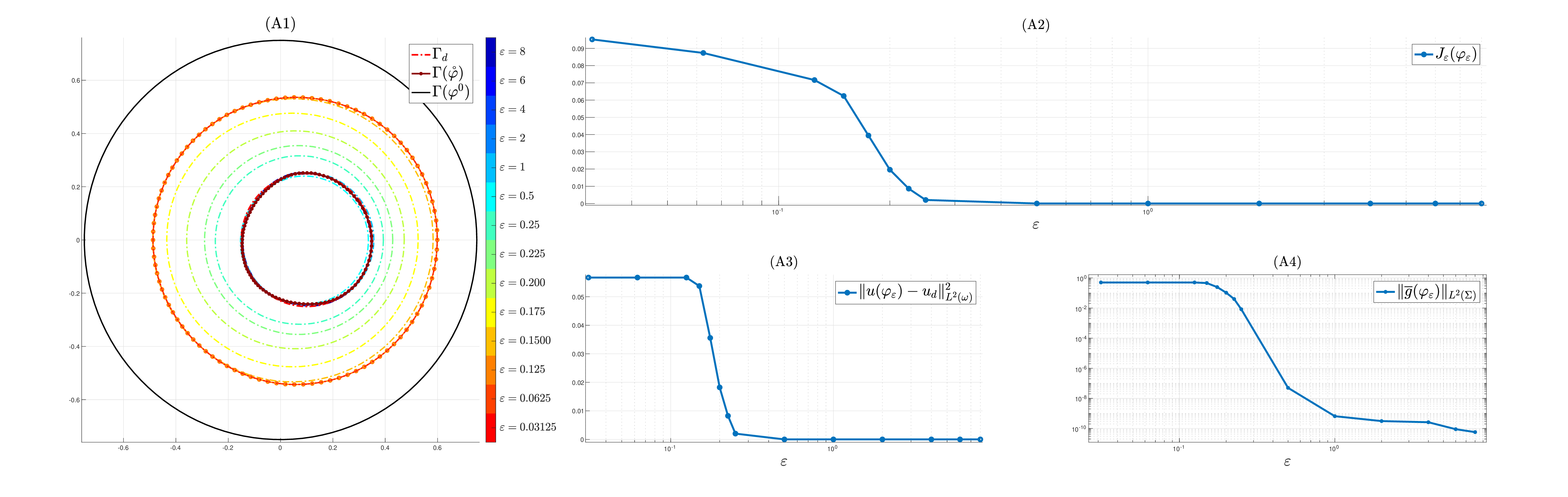}\\
		\includegraphics[width=1\textwidth]{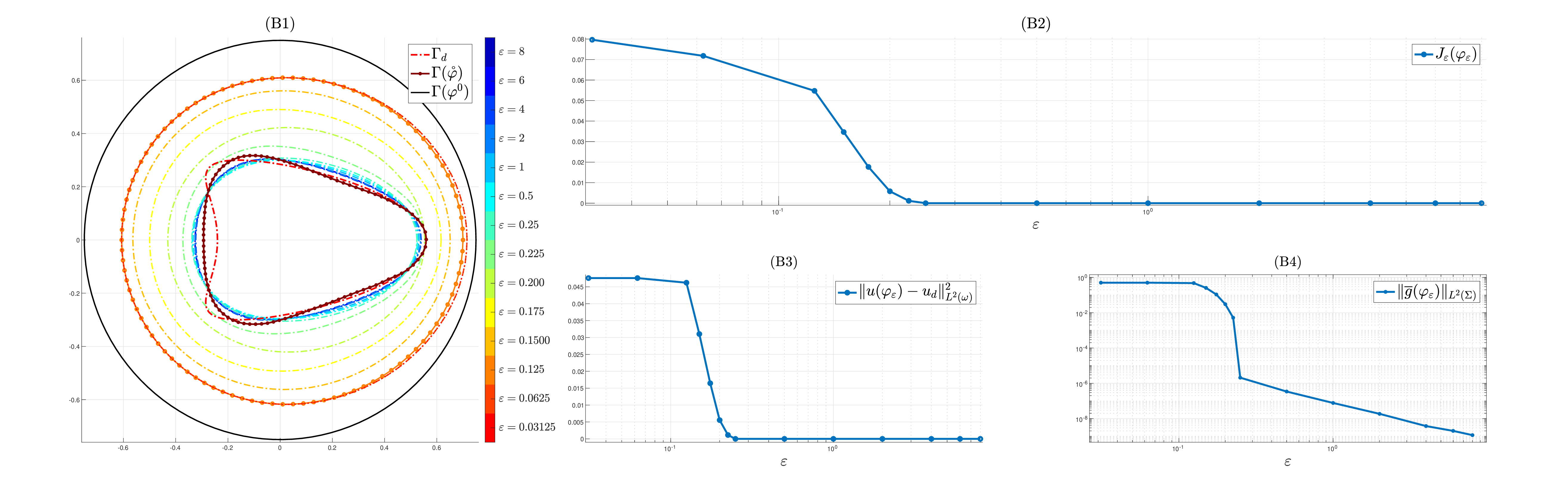}
		\caption{Comparison of the boundaries $\Gamma(\varphi_\varepsilon)$ for varying values of $\varepsilon$ with target profile $u_d$ (circle) (A1) and $u_d$ (arrow head) (B1); the optimal value of the objective function $J_\varepsilon(\varphi_\varepsilon)$ plotted against $\varepsilon$  with $u_d$ (circle) (A2) and $u_d$ (arrow head) (B2); the value of the tracking functional at the optimal deformation plotted against $\varepsilon$ with $u_d$ (circle) (A3) and $u_d$ (arrow head) (B3); and the value of $\|\overline{g}(\varphi_\varepsilon)\|_{L^2(\Sigma)}$ against $\varepsilon$ with $u_d$ (circle) (A4) and $u_d$ (arrow head) (B4).}
		\label{figure:comp}
	\end{figure}
	
	To illustrate the effects of varying the value of $\varepsilon>0$ on the boundary $\Gamma(\varphi_\varepsilon)$, the values of the objective functional, the tracking functional and the $L^2(\Sigma)$-norm of $\overline{g}(\varphi_\varepsilon)$ we have \Cref{figure:comp}. We observe that for higher values of $\varepsilon$, the boundaries $\Gamma(\varphi_\varepsilon)$---as shown in \Cref{figure:comp} (A1) and (B1)--- converge towards $\Gamma(\circphi)$. This is due to the fact that higher values of $\varepsilon>0$ allow only small values of $\|\overline{g}(\varphi_\varepsilon)\|_{L^2(\Sigma)}$. In fact, \Cref{figure:comp} (A4) and (B4) show that the norm of $\overline{g}(\varphi_\varepsilon)$ tends to zero as $\varepsilon$ increases. On the other hand, we see from \Cref{figure:comp} (A1) and (B1) that as $\varepsilon$ decreases, the boundaries go farther from $\Gamma(\circphi)$. As expected from the result in \Cref{section:low-to-no}, we observe  convergence of the boundaries $\Gamma(\varphi_\varepsilon)$ as $\varepsilon\to 0$. Specifically, the boundaries are visually indistinguishable when $\varepsilon \le 0.125$ for the target profile $u_d$(circle) and when $\varepsilon \le 0.0625$ for $u_d$(arrow head). We also see that as $\varepsilon \to 0$, $\|\overline{g}(\varphi_\varepsilon)\|_{L^2(\Sigma)}$ increases and eventually  plateaus due to the box constraints imposed on the missing data. The behaviour of $\overline{g}(\varphi_\varepsilon)$ is also reflected in the behaviour of the tracking functional: as illustrated in \Cref{figure:comp} (A3) and (B3) it decreases as $\varepsilon$ increases, and increases up to a certain bound as $\varepsilon\to0$.

	For $\varepsilon\to 0$ the value of the tracking functional appears to plateau as a consequence of the fact that the constraints become active, while the value of the cost functional can still increase as reflected by the term on the right hand side of the above expression.  According to \eqref{noglo} it is bounded by the optimal value of the no-regret problem. 

	\subsection{Testing the regret}
	We now compare the low-regret solutions  to the solutions of shape optimization problem \eqref{objcon}--\eqref{poisson}  with specific values of the data $g_\delta\in \G$.
		In the examples that we shall show, we use the functions
		\begin{align*}
			g_\delta^i(x_1,x_2) = \min\left(0.2,\max\left(-0.2, \frac{x_1x_2}{30(1 - (0.099)i)} + 0.02\cos(4\pi x_1)\cos(4\pi x_2)\right)\right),
		\end{align*}
		for $i = 1,2,\ldots, 10$, and $x = (x_1,x_2)\in \Sigma$, which is designed to converge to a bang-bang type data as we increase the index $i$. 
		
		We use the notation $J_\delta^i(\cdot) = J(\cdot, g_\delta^i)$, and denote by $\varphi_\delta^i$ the minimizer of $J_\delta^i$, and test the case where the target profile is $u_d$ (bullet) since similar observations can be inferred from the other target profile. 
		We see from \Cref{figure:robtestbull} (A) that $J_\delta^i(\varphi_\delta^i) \le J_\delta^i(\varphi_\varepsilon)$, as expected. We also notice that as the index $i$ increases, the evaluations $J_\delta^i(\varphi_\delta^i)$ and $J_\delta^i(\varphi_\varepsilon)$ increase. This is attributed to the fact as the index gets higher the Dirichlet data satisfied by the state equation gets farther from the reference Dirichlet profile $g_r$.

		Another inference we can get from \Cref{figure:robtestbull} (A) is that the evaluations $J_\delta^i(\varphi_\varepsilon)$ do not depart  too far from the optimal value $J_\delta^i(\varphi_\delta^i)$. To see this clearly, and additionally to understand the effect of $\varepsilon$, we plotted the difference $J_\delta^i(\varphi_\varepsilon) - J_\delta^i(\varphi_\delta^i)$ against the parameter $\varepsilon$ in \Cref{figure:robtestbull} (B). We notice first that the difference, for any $i$ and $\varepsilon$, does not exceed $11\times 10^{-3}$. We can thus agree that the solutions $\varphi_\varepsilon$ are indeed of `low-regret'.

		In the same figures, we can also reflect on  behaviour of  $J_\delta^i(\varphi_\varepsilon) - J_\delta^i(\varphi_\delta^i)$ as the regularizing parameter $\varepsilon$ is varied. Decreasing  this parameter from $\varepsilon= 8 $ towards 0, the value of the difference first  gets smaller, before it starts to  increases again at about $\varepsilon = 0.225$.  
		As an attempt to explain this behavior we can expect that for large values of $\varepsilon$ the effect of the regularisation term $\frac{\varepsilon}{2} \|g_\delta\|^2_{L^2(\Sigma)}$ is dominating. The difference decreases as $\varepsilon$ is decreased until an optimal balance between regularisation and the $\min-\sup$  term in the low regret formulation \eqref{eqk1} is reached. Decreasing $\varepsilon$ even further the possible ill-posedness of the no-regret formulation shows effect. In our case it is dampened as a consequence of the constraints on $g_\delta$. This effect of the parameter $\varepsilon$ on the regret formulations is quite comparable to the one of the regularisation parameter in ill-posed inverse problems. This has led to a vast literature on optimal parameter-choice strategies  in the context of inverse problems, see e.g \cite{engl1996} and the literature cited there. It might  also be of interest to carry out investigations for the optimal choice of $\varepsilon$ for low-regret problems. We finally underline that in our test problem the  no-regret solution (obtained  as $\varepsilon \to 0$) provides  a deformation field leading to a respectable boundary $\Gamma(\varphi_\varepsilon)$, as seen from the fact that  the highest difference to the optimal cost does not  exceed $11\times 10^{-3}$.

		\begin{figure}[h]
			\centering
			\includegraphics[width=0.8\textwidth]{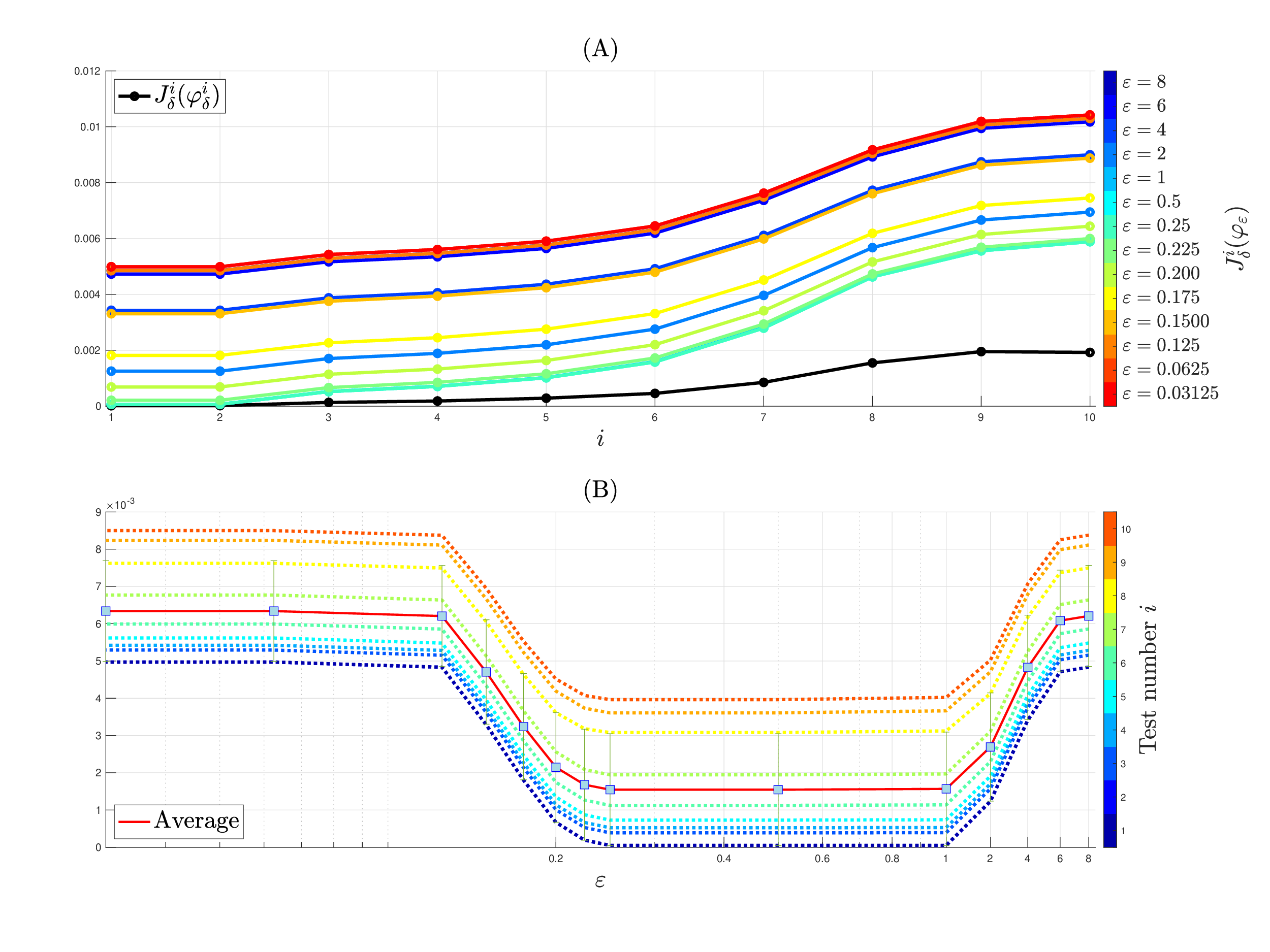}
			\caption{With target profile $u_d$(arrow head): comparison of $J_\delta^i(\varphi_\delta^i)$ with the evaluations $J_\delta^i(\varphi_\varepsilon)$ with the index $i$ on the $x$-axis and color coded according to $\varepsilon>0$ (A); plot of the differences $J_\delta^i(\varphi_\varepsilon)-J_\delta^i(\varphi_\delta^i)$ with $\varepsilon$ on the $x$-axis and color coded according to the index $i$ (B).}
			\label{figure:robtestbull}
		\end{figure}

		\section{Conclusion and outlook}\label{section:conclusion}

		A tracking-type shape optimization problem with missing Dirichlet boundary data $g_r$ was investigated. To address the challenge of missing data,  the optimization problem was reformulated into low-regret and no-regret problems. The low-regret problem is a regularized version of the no-regret problem. This reformulation introduced a new governing state equation with a one-way coupling, which we proved to be well-posed. Furthermore, we demonstrated that both the low-regret and no-regret problems admit solutions by employing convexity arguments. While the solution to the no-regret problem is not necessarily unique, we established that solutions to the low-regret problem converge to a solution of the no-regret problem as the regularization parameter $\varepsilon$ tends to zero. 
		
		Additionally,  a sensitivity analysis is provided, deriving the G{\^a}teaux derivative of the objective function for the low-regret problem. This allowed to design a gradient-based method to numerically solve the optimization problem. In our numerical examples, we demonstrated the convergence of the low-regret solutions as $\varepsilon \to 0$. Moreover, we showed convergence as $\varepsilon \to +\infty$, which corresponds to the case where the missing data is zero.
		
		We also explored how the solutions to the low-regret problem compare to those of the original optimization problem for specific choices of the data $g_\delta$. The results are promising, as the objective functional, evaluated at the solutions of the low-regret problem, remains close to the optimal value. Furthermore, we observed a threshold behavior: the difference between the objective values starts increasing  when moving toward smaller values from  $\varepsilon \approx 0.225$ on, and it also increases as $\varepsilon$ grows larger.

		In future work it can be of interest to investigate the concept of low-regret problems in context of inverse problems --{ which includes problems with measurements only on the surface }-- with missing data and to analyze the asymptotics as the regret parameter $\varepsilon$ and the noise level of the data tend to zero. {Another line of investigation could be the derivation of the optimality system for the no-regret problem. This includes analyzing the derivative of the objective functional for the low-regret problem as $\varepsilon\to 0$. }
		
		{
		We also note that if the non-convexity of the arrow head $\Gamma_d$ becomes more pronounced the accuracy of the low-regret approach will suffer. This can be considered to be a limitation of the method. It is thus a challenge for future work to propose methods which simultaneously take into consideration missing data and the reconstruction of objects with a complicated geometry.}
		

		\vspace{.5in}
		\begin{center}
			\textbf{Karl Kunisch}\\
			Johann Radon Institute for Computational and Applied Mathematics\\
			Austrian Academy of Sciences\\
			Altenberger Strasse 69, 4040 Linz, Austria
			
			\medskip
			and 
			
			\medskip
			Institute of Mathematics and Scientific Computing\\
			University of Graz\\
			Heinrichstrasse 36, A-8010 Graz, Austria\\
			
			\medskip
			E-mail: \texttt{karl.kunisch@uni-graz.at}
		\end{center}

		\vspace{.5in}
		\begin{center}
			\textbf{John Sebastian H. Simon}\\
			Mathematisches Institut\\
			Universit{\"a}t Koblenz\\
			Universit{\"a}tsstrasse 1, 56070 Koblenz, Germany\\
			E-mail: \texttt{jhsimon@uni-koblenz.de; jhsimon1729@gmail.com}
		\end{center}
		
	\end{document}